\documentclass[12pt]{article}

\usepackage[a4paper,margin=2.7cm]{geometry}
\usepackage{graphicx}%
\usepackage{multirow}%
\usepackage{amsmath,amssymb,amsfonts}%
\usepackage{amsthm}%
\usepackage{mathrsfs}%
\usepackage[title]{appendix}%
\usepackage{xcolor}%
\usepackage{textcomp}%
\usepackage{booktabs}%
\usepackage{enumitem}
\usepackage{float}
\usepackage[authoryear,round]{natbib}
\usepackage{booktabs}

\theoremstyle{plain}%
\newtheorem{theorem}{Theorem}
\newtheorem{lemma}[theorem]{Lemma}
\newtheorem{cor}[theorem]{Corollary}
\newtheorem{proposition}[theorem]{Proposition}%

\theoremstyle{definition}%
\newtheorem{example}{Example}%
\newtheorem{remark}{Remark}%

\theoremstyle{definition}%
\newtheorem{definition}{Definition}%

\begin{document}

\title{Equilibrium in the Canonical Stackelberg Triopoly via Response Functions and Fixed-Point Theory}


\author{
Anton Badev\thanks{Supervision and Regulation, Board of Governors, 20th Street and Constitution Avenue NW, Washington, DC 20551, USA. Email: anton.badev@gmail.com}
\and
Martin Pavlov\thanks{Faculty of Mathematics and Informatics, University of Plovdiv Paisii Hilendarski, 24 Tsar Assen Str., Plovdiv 4000, Bulgaria. Email: mpavlov@uni-plovdiv.bg}
\and
Boyan Zlatanov\thanks{Corresponding author. Faculty of Mathematics and Informatics, University of Plovdiv Paisii Hilendarski, 24 Tsar Assen Str., Plovdiv 4000, Bulgaria. Email: bobbyz@uni-plovdiv.bg}
}

\date{}
\maketitle


\begin{abstract}
	{We analyze a canonical extension of the Stackelberg duopoly to a
	sequential framework in which each firm anticipates the stationary
	responses of all subsequent players. For a triopoly, we reformulate
	the hierarchical system of first-order conditions as a mixed tripled
	fixed-point problem. Under suitable contractive assumptions, the
	resulting map has a unique stationary Stackelberg triple, and the
	associated fixed-point iteration converges to it from any admissible
	initial point. A separate derivative-generated map is used to derive
	sufficient second-order conditions for strict local optimality.
	Finally, a recursive formulation is employed to analyze the limiting
	behavior as the number of firms increases.}
\end{abstract}

\noindent\textbf{Keywords:} Stackelberg triopoly, market equilibrium, response functions, fixed-point theory.

\medskip
\noindent\textbf{JEL Classification:} C02, C62, D43

\medskip
\noindent\textbf{MSC Classification:} 47H10, 54H25, 46B20, 65D15, 91A10

\section{Introduction}\label{sec1}

{Oligopoly markets are characterized by strategic interdependence:
each firm's decision depends on its expectations concerning the
decisions of its competitors. This interdependence becomes
particularly explicit when firms act sequentially rather than
simultaneously, because the decisions of earlier movers determine the
strategic environment faced by subsequent firms. Such interactions
arise naturally in industries involving capacity commitments,
supply-chain ordering, infrastructure investment, and platform
markets, where production or investment decisions are made in
successive stages.

While the classical Stackelberg model focuses primarily on a
leader--follower duopoly, many markets involve three or more firms
acting sequentially. In such settings, equilibrium analysis requires
a nested backward-induction procedure. The last firm chooses its
quantity conditionally on the decisions of all preceding firms; each
intermediate firm (or middle player) anticipates the stationary responses of all
subsequent firms; and the initial leader incorporates the entire
continuation structure into its decision. This nested dependence may
make the first-order system difficult or impossible to solve
explicitly, particularly under nonlinear demand or cost functions.

The purpose of this paper is to develop a fixed-point formulation of
the first-order conditions for a canonical Stackelberg triopoly that
remains applicable when the stationary continuation responses are
defined only implicitly. We distinguish between two types of maps.
The functions \(b_2\) and \(b_3\) represent the stationary
continuation responses obtained through backward induction, whereas
the functions \(F_1,F_2,F_3\) are derivative-generated fixed-point
maps whose fixed-point equations are equivalent to the corresponding
first-order conditions. This distinction is essential: the
stationary response \(b_3\), rather than an arbitrary value of
\(F_3\), enters the reduced payoff function of the intermediate firm,
and both \(b_2\) and \(b_3\) enter the reduced payoff function of the
leader.

We first prove a one-to-one correspondence between stationary
Stackelberg triples and mixed tripled fixed points associated with the
hierarchical backward-induction map
\(\mathcal G_1\) (Theorem~\ref{thm:characterization}). Contractive
conditions imposed on this map yield existence and uniqueness of a
stationary Stackelberg triple and convergence of the corresponding
fixed-point iteration (Corollary~\ref{cor:tripodal}). We then
introduce a derivative-generated map having the same fixed points.
A separate contractive condition for this map implies the
second-order sufficient conditions for strict local maximization by
all three firms
(Proposition~\ref{prop:second-order-conditions}). Global optimality
requires additional assumptions, such as concavity of the relevant
payoff functions on their strategy sets.

The fixed-point formulation also provides a computational procedure
for approximating the stationary solution when the continuation
responses cannot be expressed in closed form. The resulting Picard
iteration should not be interpreted as best-response dynamic:
the derivative-generated maps \(F_i\) are stationary-point maps, not
the firms' best-response functions. Instead, the iteration is a
numerical fixed-point scheme whose convergence is guaranteed under
the stated contractive assumptions. Its fixed point nevertheless
coincides with the backward-induction stationary solution because the
equations defining the derivative-generated maps and the stationary
responses have the same unique solutions.

The paper is organized as follows. Section~2 introduces the Cournot
and canonical Stackelberg triopoly models and presents an illustrative
example. Section~3 develops the general theory of hierarchical mixed
tripled fixed points and establishes existence, uniqueness,
convergence, and error estimates under Hardy--Rogers-type
conditions. Section~4 applies this theory to the canonical
Stackelberg triopoly. It distinguishes stationary continuation
responses from derivative-generated maps, establishes the fixed-point
characterization of the first-order system, and derives sufficient
conditions for strict local optimality. Section~5 compares Cournot
and hierarchical Stackelberg outcomes under linear demand and linear
or quadratic costs and studies the limiting behavior and convergence
rates as the number of firms increases.

The contribution of the paper should therefore be understood as a
fixed-point analysis of the stationary conditions and their
computational properties, rather than as a new general existence
theory for Stackelberg equilibrium. Existence results based on
backward induction and suitable regularity assumptions are already
well established in the economics literature. Our focus is instead
on the construction of a hierarchical fixed-point operator,
sufficient contractive conditions for uniqueness and convergence,
and the relation between derivative-generated contractions and
second-order local optimality.

The analysis is confined to a canonical sequential quantity game with
complete information and single-valued stationary continuation
responses. The single-valuedness assumptions are imposed to obtain a
well-defined hierarchical fixed-point map. Models with multiple
stationary responses would require a set-valued formulation and are
not considered here. Similarly, the convergence of the proposed
fixed-point iteration concerns a computational method for locating
the stationary solution and should not be interpreted as a claim
about the actual adjustment behavior of firms.}

\subsection{Literature}

{

Oligopoly markets occupy a central position in economic theory, as they
describe industries in which a small number of firms dominate production and
strategic decision-making%
\footnote{See the recent contribution of 
\cite{Adhikari2025,Basiri2025258,BCKS,ByrneDeRoos2024,Dianetti,
Geraskin2024627,IIKYZ,Li2025918,Pepall,Rhee2025,Salahmanesh20241203,
Wu-Yu-Yang,Yan2024}.}.
Unlike perfectly competitive markets, where firms act as price takers, or
monopolistic markets, where a single firm controls
the market outcome, oligopolistic markets are characterized by strategic
interdependence: each firm's decision depends explicitly on its expectations
about competitors' behavior. Such structures arise naturally in industries
such as telecommunications, energy, transportation, and digital platforms,
where limited competition, commitment, and informational asymmetries play
important roles (\citealp{DKRZ,Friedman,IIKYZ,Pepall}).

The Stackelberg model introduces an explicit strategic asymmetry into this
framework. In its original formulation \cite{Stackelberg}, one firm enjoys a
commitment advantage over its rival. This advantage is commonly interpreted
through sequential decision-making: the leader first commits to a price or
quantity, and the follower observes that commitment and responds optimally
(\citealp{Pepall,Tirole}). This timing-based interpretation is especially
prominent in price competition and dynamic oligopoly models
(\citealp{BCKS,Cellini-Lambertini,MS}). Its implications, however, depend on the
firms' strategy spaces and the nature of demand. \citet{BoyerMoreaux1987} show
in a differentiated-product duopoly that firms prefer quantity strategies when
products are substitutes and price strategies when products are complements,
while consumers generally prefer price competition. Enlarging the strategy
space further, \citet{YousefimaneshBosVermeulen2023} allow firms to choose
prices and quantities jointly and show that every equilibrium features
strategic rationing by the leader; the relative sizes and profits of the firms
depend critically on the extent to which unmet demand spills over to the
follower.

A closely related literature considers Stackelberg oligopolies with three or
more firms and compares them with Cournot competition. Under linear costs,
\citet{boyer1986perfect} show that the aggregate output of a hierarchical
Stackelberg market converges to the perfectly competitive outcome.
\citet{Robson1990} establishes existence of pure-strategy equilibrium in a
general sequential $N$-firm quantity game and shows that, with free entry,
Stackelberg equilibria converge to the competitive outcome as each firm's
efficient scale becomes small relative to demand. His analysis covers both
U-shaped average costs and natural monopoly and provides an interpretation of
competition as the limit of a hierarchical market game. \citet{Anderson_Engers_1992}
compare an $m$-firm Cournot model with a hierarchy in which firms choose
outputs sequentially, demonstrating that the first-mover advantage familiar
from duopoly need not extend to every position in a larger hierarchy.
\citet{Economides1993} studies simultaneous entry followed by sequential output
choices and finds that sequential and free-entry Cournot markets have the same
aggregate output and price, although sequential competition supports fewer
firms, greater concentration, and higher surplus. Related work considers
multiple leaders and followers \cite{Askar2018,Ohnishi2021,Sherali1984}.
Within this class, \citet{Tesoriere2017Zero,Tesoriere2017Setup} uses
lattice-theoretic methods to establish equilibrium existence with and without
setup costs and characterize entry accommodation, entry deterrence, and the
effects of changing the numbers of leaders and followers.

Other extensions show that the consequences of sequential commitment depend
on information and on whether timing is imposed or chosen. Under private
information about stochastic demand, \citet{Cumbul2021} finds that signaling
can reverse the conventional perfect-information comparison: sequential
competition may produce lower expected output and welfare than Cournot
competition, and the final mover may earn more than the leader.
\citet{EscrihuelaVillar2009} endogenizes the sequence between a cartel and a
competitive fringe and shows that simultaneous play may be preferred when
collusion is readily sustainable. Beyond oligopoly, \citet{KonradLeininger2007}
study a sequential all-pay auction and show that endogenous timing can lead the
strongest contestant to move late. These contributions demonstrate that
leadership, followership, and even the equilibrium ordering of moves are
endogenous to the strategic and informational environment.

Another strand of the literature develops fixed-point methods for equilibrium
analysis. Beginning with the Banach contraction principle \cite{Banach}, this
theory has been extended through Chatterjea, Hardy--Rogers, and Kannan-type
contractions \cite{Chatterjea,Hardy-Rogers,Kannan}, as well as coupled
\cite{BL,GL}, tripled \cite{Borcut-Berinde}, and $n$-tupled fixed points
\cite{Samet-Vetro}. Coupled fixed points have been applied to Cournot,
Bertrand, and unified Cournot--Bertrand duopoly models \cite{DKRZ}, while
\citet{Petrusel} clarifies the relationship between ordinary and coupled fixed
points. Related methods have been used for duopolies with nondifferentiable
response functions \cite{Kabaivanov} and for triopoly markets \cite{IIKYZ}.
The present paper connects this fixed-point literature with the canonical
hierarchical Stackelberg model. For a triopoly, we formulate the nested
stationary conditions as a hierarchical mixed tripled fixed-point problem,
distinguish continuation responses from derivative-generated stationary-point
maps, and provide sufficient conditions for uniqueness, computational
convergence, and strict local optimality. We then extend the model to $n$ firms
and refine the competitive-limit results associated with
\citet{boyer1986perfect,Robson1990} by deriving explicit convergence rates under
linear and quadratic costs. Unlike Robson's free-entry experiment, which
shrinks firm scale relative to demand, our large-market analysis increases the
number of firms while holding the cost specification fixed; the two approaches
therefore provide related but distinct foundations for competition as the limit
of sequential quantity interaction.
}

{
	The monograph \cite{BCKS} provides a comprehensive analysis of
	nonlinear oligopoly models, including Cournot and generalized
	oligopolies under capacity and nonnegativity constraints, and
	develops a broad range of local and global stability results. Within
	the fixed-point literature, \cite{IIKYZ} reformulates the stationary
	conditions of a Cournot triopoly as a generalized tripled fixed-point
	problem. Under appropriate differentiability and optimality
	assumptions, the fixed points obtained in this way coincide with the
	solutions generated by payoff maximization, while suitable
	contractive conditions additionally guarantee uniqueness and
	convergence of the associated numerical fixed-point iteration. This
	computational convergence should be distinguished from the stability
	of an economic best-response dynamics.
	
	Related equilibrium and computational questions also arise in
	stochastic, differential, and hierarchically organized oligopoly
	models. In a stochastic differential setting, \cite{Dianetti}
	establishes the existence of open-loop Nash equilibria by means of
	forward--backward stochastic differential equations and applies the
	results to capacity expansion in oligopolistic markets. In energy and
	electricity markets, \cite{Wu-Yu-Yang} and \cite{Yan2024} develop
	multi-cluster Stackelberg--Nash models incorporating Cournot
	competition at the lower hierarchical level. These studies establish
	equilibrium results under their respective assumptions and propose
	distributed procedures for equilibrium computation.
}

\section{Preliminaries}

{Consider three firms supplying a homogeneous product to the same
market. Let \(x\), \(y\), and \(z\) denote their respective output
levels, and let
\(Z=x+y+z\)
be the aggregate market output. The market price is determined by the
inverse demand function
\(P(Z)=P(x+y+z)\),
which is assumed to be continuous and, whenever derivatives are used,
sufficiently differentiable. Firm \(i\) has a cost function \(c_i\),
\(i=1,2,3\).

The payoff functions of the firms are
\[
\left\{
\begin{array}{l}
	\Pi_1(x,y,z)=xP(x+y+z)-c_1(x),\\[1mm]
	\Pi_2(x,y,z)=yP(x+y+z)-c_2(y),\\[1mm]
	\Pi_3(x,y,z)=zP(x+y+z)-c_3(z).
\end{array}
\right.
\]}

\subsection{The Cournot Triopoly}\label{sec:2.1}

{In the classical Cournot model, the firms act simultaneously and
non-cooperatively. Each firm chooses its output while treating the
outputs of its competitors as fixed
\cite{Cournot2,Friedman,Smith,Varian}. Thus, the firms solve the
individual optimization problems
\[
\left\{
\begin{array}{l}
	\max\{\Pi_1(x,y,z):x\geq0\},\\
	\max\{\Pi_2(x,y,z):y\geq0\},\\
	\max\{\Pi_3(x,y,z):z\geq0\}.
\end{array}
\right.
\]
For an interior solution, the corresponding first-order necessary
conditions are given by}
\begin{equation}\label{eq-foc-cournot}
	\left\{
	\begin{array}{l}
		\displaystyle
		\frac{\partial \Pi_1}{\partial x}(x,y,z)
		= P\!\left(x+y+z\right)
		+ xP^\prime\!\left(x+y+z\right)
		- c_1^\prime(x)=0,\\[2mm]
		\displaystyle
		\frac{\partial \Pi_2}{\partial y}(x,y,z)
		= P\!\left(x+y+z\right)
		+ yP^\prime\!\left(x+y+z\right)
		- c_2^\prime(y)=0,\\[2mm]
		\displaystyle
		\frac{\partial \Pi_3}{\partial z}(x,y,z)
		= P\!\left(x+y+z\right)
		+ zP^\prime\!\left(x+y+z\right)
		- c_3^\prime(z)=0.
	\end{array}
	\right.
\end{equation}

{A Cournot--Nash equilibrium is a vector of output levels such that
each firm maximizes its payoff given the output levels of the other
firms. Every interior Cournot--Nash equilibrium satisfies
\eqref{eq-foc-cournot}. The converse does not hold without additional
assumptions, because a solution of the first-order system may
represent a minimum or another stationary point rather than a
profit-maximizing choice.

If the strategy sets are bounded, one must also determine whether a
maximum is attained in the interior or on the boundary of the
feasible set \cite{Friedman}. Moreover,
\eqref{eq-foc-cournot} may have more than one solution. The strict
second-order inequalities}
\[
	\left\{
	\begin{array}{l}
		\displaystyle
		\frac{\partial^2 \Pi_1}{\partial x^2}(\xi_1,\xi_2,\xi_3)<0,\\[1mm]
		\displaystyle
		\frac{\partial^2 \Pi_2}{\partial y^2}(\xi_1,\xi_2,\xi_3)<0,\\[1mm]
		\displaystyle
		\frac{\partial^2 \Pi_3}{\partial z^2}(\xi_1,\xi_2,\xi_3)<0.
	\end{array}
	\right.
\]
{guarantee strict local maximization with respect to each firm's own
output. Global optimality follows if, in addition, each payoff
function is concave in the firm's own output on its entire strategy
set.}

\begin{remark}
	{Whenever an implicitly defined function is used, we assume that the
	corresponding hypotheses of the implicit function theorem are
	satisfied. In particular, the relevant functions are assumed to have
	the regularity required for implicit differentiation.}
\end{remark}

\begin{remark}
	{In some special cases, the first-order system
	\eqref{eq-foc-cournot} can be solved explicitly in the form
	\[
	x=b_1(y,z),\qquad
	y=b_2(x,z),\qquad
	z=b_3(x,y).
	\]
	If these functions are obtained by maximizing the firms' payoff
	functions, they are best-response functions. A fixed point of the
	ordered triple \((b_1,b_2,b_3)\) satisfies the first-order system.
	Conversely, if the solutions of the individual first-order equations
	are unique, every solution of \eqref{eq-foc-cournot} is a fixed point
	of \((b_1,b_2,b_3)\).
	
	In general, best-response mappings may be set-valued. A systematic
	study of multivalued mappings and their fixed-point structures can be
	found in \cite{TronThera2026}. In the present paper, we restrict the
	analysis to single-valued implicitly defined stationary responses.
	
	For nonlinear demand or cost functions, explicit formulas for the
	stationary responses may not be available. In such cases, the
	first-order equations can instead be represented by
	derivative-generated fixed-point maps
	\cite{DKRZ,Badev_atal_longrun_mobilemarket_math12050724}. The fixed
	points of these maps coincide with the solutions of the corresponding
	first-order system, provided that the required uniqueness assumptions
	hold. These maps are computational stationary-point maps and should
	not, in general, be identified with the firms' best-response
	functions. Accordingly, their Picard iterations represent numerical
	fixed-point procedures rather than best-response dynamics.%
	\footnote{See
		\cite{Badev_atal_longrun_mobilemarket_math12050724} for an
		application to a duopoly market and \cite{IIKYZ} for an application
		to a triopoly market.}}
\end{remark}

\subsection{The Canonical Stackelberg Triopoly}\label{sec:2.3}

In the canonical Stackelberg model \cite{Stackelberg}, the firms
choose their output levels sequentially rather than simultaneously.
Firm~1, called the leader, acts first. Firm~2, called the middle
player, observes the output of Firm~1 before making its decision,
whereas Firm~3, called the follower, acts after observing the
quantities selected by both preceding firms.

The sequential structure is analyzed by backward induction. Although
Firm~3 acts after Firms~1 and~2, its optimization problem is solved
first, with \(x\) and \(y\) treated as given. This determines the
follower's stationary continuation response \(b_3(x,y)\). The
optimization problem of Firm~2 is then solved by taking into account
the quantity \(x\) selected by Firm~1 and the stationary continuation
response \(b_3(x,y)\) of Firm~3. This determines the middle player's
stationary continuation response \(b_2(x)\). Finally, the optimization
problem of Firm~1 is solved by incorporating the stationary
continuation responses of both subsequent firms. The resulting
asymmetry is therefore generated by the order of decisions, whereas
the optimization problems are solved in the reverse order prescribed
by backward induction.

{
	The appropriate equilibrium concept for this sequential game is
	subgame-perfect equilibrium. A strategy profile is a subgame-perfect
	equilibrium if it induces a Nash equilibrium in every subgame. In the
	present three-stage quantity game, subgame perfection requires Firm~3
	to choose an optimal quantity after every admissible history
	\((x,y)\), Firm~2 to choose an optimal quantity after every admissible
	choice \(x\) of Firm~1 while accounting for Firm~3's continuation
	response, and Firm~1 to choose an optimal quantity while anticipating
	the continuation choices of both subsequent firms. Thus, optimality
	is required not only along the equilibrium path but also after every
	admissible history of the game.
}

Let \(\Pi_i(x,y,z)\), \(i=1,2,3\), be the payoff functions introduced
above, where the index indicates the hierarchical position of the
firm. In the backward-induction procedure, the optimization problem
of Firm~3, called the follower, is solved first, with \(x\) and \(y\)
treated as given. For an interior stationary solution, its
first-order condition is
\begin{equation}\label{eq:11}
\frac{\partial\Pi_3}{\partial z}(x,y,z)
=
P(x+y+z)
+
zP'(x+y+z)
-
c_3'(z)
=
0.
\end{equation}

Assume that equation \eqref{eq:11} has a unique solution with respect to \(z\),
denoted by
\(z=b_3(x,y)\).
At this stage, \(b_3(x,y)\) is a stationary response. It is a
best-response function only when the corresponding maximization
conditions are also satisfied.

Firm~2 (the intermediate player) anticipates the stationary response \(b_3(x,y)\) of Firm~3
and therefore considers the reduced payoff function
\[
\Phi_2(x,y)
=
\Pi_2\bigl(x,y,b_3(x,y)\bigr).
\]

Since \(b_3(x,y)\) depends on \(y\), this dependence must be included
when differentiating the reduced payoff. The first-order condition
for an interior stationary solution of Firm~2 is

\begin{equation}\label{eq:12}
	\begin{aligned}
		\frac{\partial \Phi_2}{\partial y}(x,y)
		&=
		P\!\left(x+y+b_3(x,y)\right)\\
		&\quad+
		yP^\prime\!\left(x+y+b_3(x,y)\right)
		\left(
		1+\frac{\partial b_3}{\partial y}(x,y)
		\right)
		-c_2^\prime(y)
		=0.
	\end{aligned}
\end{equation}

Equation~\eqref{eq:12} is the first-order condition for the middle
player's reduced payoff. Assume that, for every \(x\in X_1\), it has
a unique solution with respect to \(y\), denoted by \(y=b_2(x)\).

Firm~1 anticipates the stationary responses of both subsequent firms
and therefore considers the reduced payoff function
\[
\Phi_1(x)
=
\Pi_1\bigl(
x,b_2(x),b_3(x,b_2(x))
\bigr).
\]

Its first-order condition for an interior stationary solution is
\[
	\frac{d\Phi_1}{dx}(x)
	=
	\frac{d}{dx}
	\Pi_1\bigl(
	x,b_2(x),b_3(x,b_2(x))
	\bigr)
	=
	0.
\]

Consequently, every interior Stackelberg equilibrium satisfies the
following system of first-order conditions:

\begin{equation}\label{eq-foc-stackelberg}
	\left\{
	\begin{array}{rcl}
		\displaystyle
		\frac{\partial\Pi_3}{\partial z}(x,y,z)&=&0,\\[8pt]
		\displaystyle
		\frac{\partial\Phi_2}{\partial y}(x,y)&=&0,\\[8pt]
		\displaystyle
		\frac{d\Phi_1}{dx}(x)&=&0.
	\end{array}
	\right.
\end{equation}

If \((\xi,\eta,\theta)\) is an interior Stackelberg equilibrium, then
it satisfies \eqref{eq-foc-stackelberg} together with the hierarchical
consistency relations
\(\eta=b_2(\xi)\), \(\theta=b_3(\xi,\eta)
=
b_3\bigl(\xi,b_2(\xi)\bigr)\).

Conversely, a triple satisfying these equations is initially only a
stationary Stackelberg triple. Additional second-order or global
concavity conditions are required to conclude that it is an
equilibrium.

The strict second-order inequalities
\[	\left\{
	\begin{array}{rcl}
		\displaystyle
		\frac{\partial^2\Pi_3}{\partial z^2}
		(\xi,\eta,\theta)&<&0,\\[8pt]
		\displaystyle
		\frac{\partial^2\Phi_2}{\partial y^2}
		(\xi,\eta)&<&0,\\[8pt]
		\displaystyle
		\frac{d^2\Phi_1}{dx^2}(\xi)&<&0
	\end{array}
	\right.
\]
are sufficient for strict local maximization at the three successive
stages. To obtain global optimality, one may additionally assume that
\(\Pi_3(x,y,\cdot)\), \(\Phi_2(x,\cdot)\), and \(\Phi_1\) are concave
on their respective strategy sets. If the stated concavity conditions hold after every admissible
history, the stationary continuation responses are optimal in all
corresponding subgames. Consequently, the backward-induction solution
constitutes a subgame-perfect equilibrium.

{\subsection{A Simple Illustrative Example}
\label{sec:example}

The following example illustrates the Cournot and canonical
hierarchical Stackelberg constructions under linear demand and
quadratic costs. We use small parameter values so that all
calculations can be verified directly.

\begin{example}\label{ex-new-1}
	Consider a triopoly market with the inverse demand function
	\begin{equation}\label{eq:example-demand}
		P(Q)=12-Q,
		\qquad
		Q=x+y+z,
	\end{equation}
	and quadratic cost functions
	\(C_i(u)=u^2\) for \(i=1,2,3\).
	
	The payoff functions are therefore
	\[
		\begin{aligned}
			\Pi_1(x,y,z)
			&=
			x(12-x-y-z)-x^2,\\
			\Pi_2(x,y,z)
			&=
			y(12-x-y-z)-y^2,\\
			\Pi_3(x,y,z)
			&=
			z(12-x-y-z)-z^2.
		\end{aligned}
	\]
\end{example}

\subsubsection{Cournot Equilibrium}

In the Cournot model, each firm maximizes its payoff while treating
the outputs of the other two firms as fixed. The first-order
conditions are
\[
	\left\{
	\begin{array}{rcl}
		12-4x-y-z&=&0,\\
		12-x-4y-z&=&0,\\
		12-x-y-4z&=&0.
	\end{array}
	\right.
\]

By symmetry, the unique solution is
\(x_C=y_C=z_C=2\) (Figure \eqref{fig:cournot-iteration-firms}).
Hence the aggregate Cournot output and market price are
\(Q_C=6\) and \(P_C=6\).

The second derivatives with respect to the firms' own quantities are
\[
\frac{\partial^2\Pi_1}{\partial x^2}
=
\frac{\partial^2\Pi_2}{\partial y^2}
=
\frac{\partial^2\Pi_3}{\partial z^2}
=
-4<0.
\]
Since the payoff functions are strictly concave in the corresponding
decision variables, \((2,2,2)\) is the unique Cournot equilibrium.

The equilibrium profit of each firm is
\(\Pi_1^C=\Pi_2^C=\Pi_3^C
	=
	2\cdot6-2^2
	=
	8\).
Thus, aggregate producer profit is
\(\Pi_{\mathrm{tot}}^C=24\).

\begin{figure}[!htbp]
	\centering
	\includegraphics[width=0.82\textwidth]{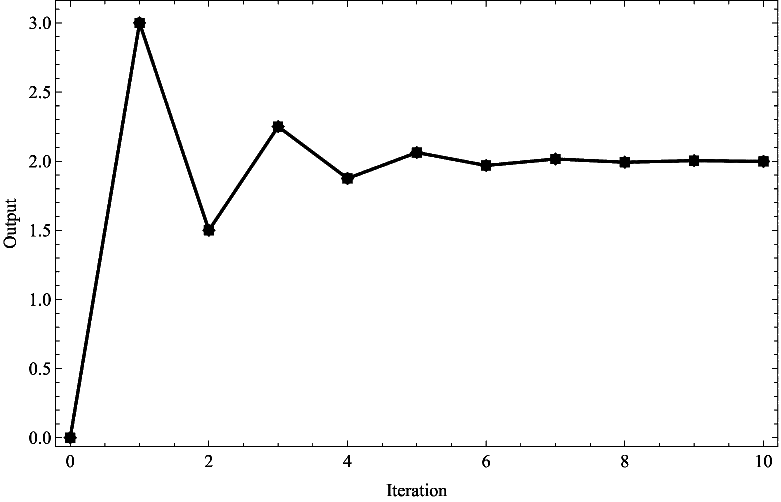}
	\caption{Firm-level output trajectories under the Cournot best-response iteration. Green: Firms~1,~2, and 3. The iteration converges to the symmetric Cournot equilibrium \(x_C=y_C=z_C=2\).}
	\label{fig:cournot-iteration-firms}
\end{figure}

\subsubsection{Stackelberg Equilibrium}

We now consider the canonical hierarchical Stackelberg model in
which Firm~1 is the initial leader, Firm~2 is the middle player,
and Firm~3 is the final follower. All firms are restricted to
nonnegative output levels.

For fixed \(x\geq0\) and \(y\geq0\), Firm~3 solves
\(\max_{z\geq0}\Pi_3(x,y,z)\)
The first-order condition for an interior solution is
\(\frac{\partial\Pi_3}{\partial z}(x,y,z)
=
12-x-y-4z
=
0\).

Since
\(\frac{\partial^2\Pi_3}{\partial z^2}
=
-4<0\),
the payoff of Firm~3 is strictly concave in \(z\). Consequently, its
unique continuation best response is
\begin{equation}\label{eq:example-b3}
	b_3(x,y)
	=
	\max\left\{
	0,
	\frac{12-x-y}{4}
	\right\}.
\end{equation}

Thus, the response of Firm~3 is interior whenever \(x+y<12\) and is
equal to zero whenever \(x+y\geq12\).

Firm~2 anticipates the continuation response
\eqref{eq:example-b3}. Suppose first that \(0\leq x<12\) and consider
the region \(x+y<12\), in which the response of Firm~3 is interior.
On this region, the reduced payoff function of Firm~2 is
\[
	\Phi_2(x,y)=
	\Pi_2\bigl(x,y,b_3(x,y)\bigr)=
	9y-\frac{3}{4}xy-\frac{7}{4}y^2.
\]

The corresponding first-order condition is
\(\frac{\partial\Phi_2}{\partial y}(x,y)
=
9-\frac{3}{4}x-\frac{7}{2}y
=
0\),
and hence the interior stationary response is
\(y
=
\frac{18}{7}-\frac{3}{14}x\).

Moreover,
\(\frac{\partial^2\Phi_2}{\partial y^2}
=
-\frac{7}{2}<0\),
so this stationary response is the unique maximizer of the reduced
payoff on the interior region.

For every \(0\leq x<12\), the value of \(y
=
\frac{18}{7}-\frac{3}{14}x\) is positive and satisfies
\[
\frac{12-x-y}{4}=\frac{12-x-(\frac{18}{7}-\frac{3}{14}x)}{4}>0.
\]

It therefore belongs to the region in which the interior expression
for \(b_3\) is valid. If \(x\geq12\), both continuation firms optimally
choose zero output. Consequently, the unique continuation best
response of Firm~2 can be written as
\begin{equation}\label{eq:example-b2}
	b_2(x)
	=
	\max\left\{
	0,
	\frac{18}{7}-\frac{3}{14}x
	\right\}.
\end{equation}

For \(0\leq x<12\), substituting
\(y=b_2(x)
=
\frac{18}{7}-\frac{3}{14}x\)
into the continuation response of Firm~3 gives
\[
b_3\bigl(x,b_2(x)\bigr)=
	\frac{12-x-b_2(x)}{4}=
	\frac{33}{14}-\frac{11}{56}x.
\]

Therefore, on the relevant interior region \(0\leq x<12\), the
reduced payoff function of Firm~1 is
\[
	\Phi_1(x)=
	\Pi_1\bigl(
	x,b_2(x),b_3(x,b_2(x))
	\bigr)=
	\frac{99}{14}x-\frac{89}{56}x^2.
\]

Its first-order condition is
\(\frac{d\Phi_1}{dx}(x)
=
\frac{99}{14}-\frac{89}{28}x
=
0\).
Hence,
\(x_S
=
\frac{198}{89}
\approx
2.225\).

Furthermore,
\(\frac{d^2\Phi_1}{dx^2}
=
-\frac{89}{28}<0\),
so \(\Phi_1\) is strictly concave on \(0\leq x<12\), and \(x_S\) is
its unique maximizer on this region.

For \(x\geq12\), the continuation responses satisfy
\(b_2(x)=0\), \(b_3\bigl(x,b_2(x)\bigr)=0\),
and the leader's reduced payoff becomes
\[
\Phi_1(x)
=
x(12-x)-x^2
=
12x-2x^2=2x(6-x)<0
\]
for \(x\geq 12\). At the same time, \(\Phi_1(x_S)\approx 7.866>0\). 

Therefore, no value \(x\geq 12\) can be optimal, because it yields a negative profit, whereas 
\(x_S\) yields a positive profit.

Applying backward induction gives (Figure \ref{fig:stackelberg-iteration-firms})
\begin{equation}\label{eq:example-stackelberg-quantities}
	\begin{aligned}
		x_S
		&=
		\frac{198}{89}
		\approx2.225,\\
		y_S
		&=
		b_2(x_S)
		=
		\frac{1305}{623}
		\approx2.095,\\
		z_S
		&=
		b_3(x_S,y_S)
		=
		\frac{4785}{2492}
		\approx1.920.
	\end{aligned}
\end{equation}

The aggregate Stackelberg output is
\(Q_S
	=
	x_S+y_S+z_S
	=
	\frac{15549}{2492}
	\approx
	6.240\),
and the corresponding market price is
\(P_S
	=
	12-Q_S
	=
	\frac{14355}{2492}
	\approx
	5.760\).

Since the relevant payoff functions are strictly concave at all three
stages, the stationary triple in
\eqref{eq:example-stackelberg-quantities} is the unique Stackelberg
equilibrium for this example.

The equilibrium profits are
\[
	\begin{aligned}
		\Pi_1^S
		&=
		\frac{9801}{1246}
		\approx7.866,\\
		\Pi_2^S
		&=
		\frac{1703025}{221788}
		\approx7.679,\\
		\Pi_3^S
		&=
		\frac{22896225}{3105032}
		\approx7.374.
	\end{aligned}
\]

Thus, aggregate producer profit is
\(\Pi_{\mathrm{tot}}^S
	=
	\frac{71162667}{3105032}
	\approx
	22.918\).
	
\begin{figure}[!htbp]
	\centering
	\includegraphics[width=0.82\textwidth]{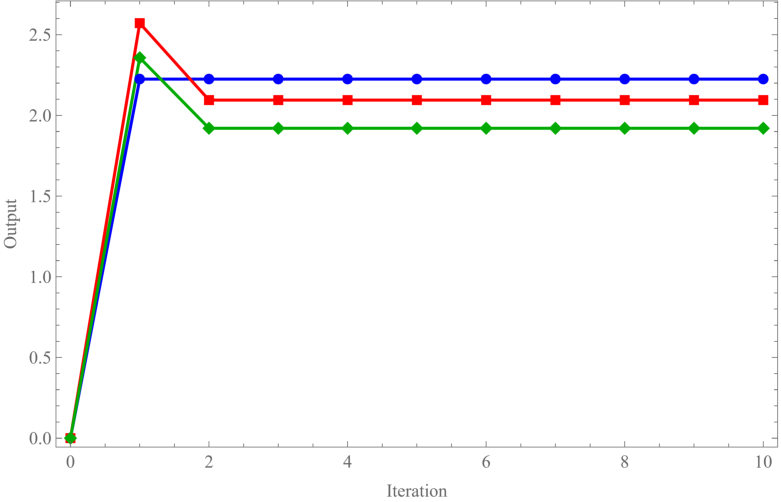}
	\caption{Firm-level outputs generated by the hierarchical Stackelberg backward-induction updates. At each step, the continuation responses \(b_2\) and \(b_3\) are evaluated successively. Blue: Firm~1. Red: Firm~2. Green: Firm~3.}
	\label{fig:stackelberg-iteration-firms}
\end{figure}

\subsubsection{Consumer and Total Surplus}

For the inverse demand function \eqref{eq:example-demand}, consumer
surplus at aggregate output \(Q\) is
\[
	CS(Q)
	=
	\int_0^Q(12-t)\,dt-(12-Q)Q
	=
	\frac{Q^2}{2}.
\]

At the Cournot equilibrium,
\(CS_C
	=
	\frac{6^2}{2}
	=
	18\).
At the Stackelberg equilibrium,
\(CS_S
	=
	\frac12
	\left(
	\frac{15549}{2492}
	\right)^2
	=
	\frac{241771401}{12420128}
	\approx
	19.466\).

Total surplus is the sum of consumer surplus and aggregate producer
profit. Therefore,
\(TS_C
	=
	CS_C+\Pi_{\mathrm{tot}}^C
	=
	18+24
	=
	42\),
whereas
\(TS_S=
		CS_S+\Pi_{\mathrm{tot}}^S=
		\frac{526422069}{12420128}\approx
		42.385\).

Thus, in this example, hierarchical Stackelberg competition produces
a larger aggregate quantity and a lower market price than Cournot
competition. Consumer surplus increases, aggregate producer profit
decreases, and total surplus is slightly higher. The principal equilibrium outcomes under Cournot and hierarchical Stackelberg competition are summarized in
Table~\ref{tab:simple-example-comparison}.

\begin{table}[ht]
	\centering
	\caption{Cournot and Stackelberg equilibrium outcomes for
		\(A=12\), \(B=1\), and \(c=1\).}
	\label{tab:simple-example-comparison}
	\begin{tabular}{l c c}
		\toprule
		Outcome & Cournot & Stackelberg\\
		\midrule
		Firm~1 output & \(2.000\) & \(2.225\)\\
		Firm~2 output & \(2.000\) & \(2.095\)\\
		Firm~3 output & \(2.000\) & \(1.920\)\\
		Aggregate output & \(6.000\) & \(6.240\)\\
		Market price & \(6.000\) & \(5.760\)\\
		Aggregate producer profit & \(24.000\) & \(22.918\)\\
		Consumer surplus & \(18.000\) & \(19.466\)\\
		Total surplus & \(42.000\) & \(42.385\)\\
		\bottomrule
	\end{tabular}
\end{table}

The example also illustrates the asymmetric allocation generated by
sequential play. Relative to the Cournot quantity, the initial leader
and the intermediate firm produce more, whereas the final follower
produces less. These numerical conclusions concern the present
parameter values and are not used as substitutes for the general
comparative results established in Section~
\ref{sec:large-market-comparison}.}

\section{Main Result}

It may be difficult or even impossible to solve the first-order conditions explicitly, both in the Cournot model and in the Stackelberg framework. For instance, if the inverse demand function is given by
$P = 10 - \arctan(x+y+z)$ and the cost functions take the form
$C(t) = t e^{t}$, then explicit closed-form expressions for the response functions generally cannot be obtained. 

In such cases, we can adopt the static approach to
Cournot used by in \cite{Badev_atal_longrun_mobilemarket_math12050724,DKRZ}, in
which the structure of the first-order conditions is used to recast
the optimality conditions as a fixed-point problem.

\subsection{Mixed Tripled Fixed Points}\label{sec:4.1}
{
\begin{definition}\label{def:shorthand-41}
	Let \(X_1\), \(X_2\), and \(X_3\) be nonempty sets, and set
	\(X=X_1\times X_2\times X_3\).
	Let
	\(\mathcal F_i:X\to X_i\) for \(i=1,2,3\),
	be given mappings.
	
	Define the hierarchical map \(S:X\to X\) by
	\[S(x,y,z)
	=
	\bigl(
	S^1(x,y,z),
	S^2(x,y,z),
	S^3(x,y,z)
	\bigr),\]
	where
	\[
	\begin{aligned}
		S^3(x,y,z)
		&=
		\mathcal F_3(x,y,z),\\
		S^2(x,y,z)
		&=
		\mathcal F_2
		\bigl(
		x,y,S^3(x,y,z)
		\bigr),\\
		S^1(x,y,z)
		&=
		\mathcal F_1
		\bigl(
		x,
		S^2(x,y,z),
		S^3(x,y,z)
		\bigr).
	\end{aligned}
	\]
	Equivalently,
	\[
		S(x,y,z)
		=
		\Bigl(
		\mathcal F_1\bigl(
		x,
		\mathcal F_2(x,y,\mathcal F_3(x,y,z)),
		\mathcal F_3(x,y,z)
		\bigr),\mathcal F_2(x,y,\mathcal F_3(x,y,z)),
		\mathcal F_3(x,y,z)
		\Bigr).
	\]
	
	The mappings \(S^i:X\to X_i\), \(i=1,2,3\), are called the
	coordinate maps of \(S\).
\end{definition}

\begin{definition}\label{def:distance-41}
	Let \((X_i,d_i)\), \(i=1,2,3\), be metric spaces, set
	\(X=X_1\times X_2\times X_3\),
	and let \(S:X\to X\) be the hierarchical map from
	Definition~\ref{def:shorthand-41}.
	
	For
	\[
	\overline{x}=(x,y,z),
	\qquad
	\overline{u}=(u,v,w)
	\]
	in \(X\), define
	\[
	M_1(\overline{x},\overline{u})=
	d_1(x,u)+d_2(y,v)+d_3(z,w),
	\]
	\[
		M_2^S(\overline{x},\overline{u})=
		M_1(\overline{x},S(\overline{x}))
		+
		M_1(\overline{u},S(\overline{u})),
	\]
	and
	\[
		M_3^S(\overline{x},\overline{u})=
		M_1(\overline{x},S(\overline{u}))
		+
		M_1(\overline{u},S(\overline{x})).
	\]
\end{definition}

\begin{definition}\label{def:mixed-tripled-fp}
	Let \(X_1,X_2,X_3\) be nonempty sets, and let
	\(S:X_1\times X_2\times X_3
	\to X_1\times X_2\times X_3\)
	be the hierarchical map generated by
	\((\mathcal F_1,\mathcal F_2,\mathcal F_3)\) according to
	Definition~\ref{def:shorthand-41}.
	
	A point
	\(\overline{x}^{\,*}
	=
	(x^*,y^*,z^*)
	\in X_1\times X_2\times X_3\)
	is called a mixed tripled fixed point of
	\((\mathcal F_1,\mathcal F_2,\mathcal F_3)\) with respect to the
	hierarchical architecture if
	\(S(\overline{x}^{\,*})=\overline{x}^{\,*}\).
	
	Equivalently,
	\(S^1(x^*,y^*,z^*)=x^*\), \(S^2(x^*,y^*,z^*)=y^*\), and \(S^3(x^*,y^*,z^*)=z^*\).
\end{definition}

\begin{definition}\label{def:local-cross-symmetry}
	Let \(X_1=X_2=X_3=X\), and let
	\(S:X^3\to X^3\), \(S=(S^1,S^2,S^3)\),
	be the hierarchical map from
	Definition~\ref{def:shorthand-41}.
	
	We say that \(S\) satisfies the local triple cross-symmetry
	property at
	\((x^*,y^*,z^*)\in X^3\) if
	\[
	\begin{aligned}
		S^1(y^*,z^*,x^*)
		&=S^2(x^*,y^*,z^*),\\
		S^2(y^*,z^*,x^*)
		&=S^3(x^*,y^*,z^*),\\
		S^3(y^*,z^*,x^*)
		&=S^1(x^*,y^*,z^*).
	\end{aligned}
	\]
\end{definition}
}
\begin{theorem}\label{thm:unified-41}
	Let $(X_i,d_i)$, $i=1,2,3$, be complete metric spaces, and let
	${\cal{F}}_i:X_1\times X_2\times X_3 \to X_i$ be mappings.
	Let $S:X_1\times X_2\times X_3 \to X_1\times X_2\times X_3$ be the hierarchical map from Definition~\ref{def:shorthand-41}.
	Assume that there exist non-negative constants $k_1,k_2,k_3$ such that
	$k_1 + 2k_2 + 2k_3 < 1$, and for all $(x,y,z),(u,v,w)\in X_1\times X_2\times X_3$ there holds
	\begin{equation}\label{eq:HR-unified-41}
		M_1\left(S(\overline{x}),S(\overline{u})\right)\leq
		k_1\,M_1\big((\overline{x}),(\overline{u})\big)
		+ k_2\,M_2^{S}\big((\overline{x}),(\overline{u})\big)
		+ k_3\,M_3^{S}\big((\overline{x}),(\overline{u})\big),
	\end{equation}
	where $\overline{x}=(x,y,z)$ and $\overline{u}=(u,v,w)$.
	
	Then:
	\begin{enumerate}
		\item There exists a unique $\overline{x}^{\,*}=(x^*,y^*,z^*)\in X_1\times X_2\times X_3$
		such that $\overline{x}^{\,*}=S(\overline{x}^{\,*})$.
		In particular, $\overline{x}^{\,*}$ is the unique mixed tripled fixed point of $({\cal{F}}_1,{\cal{F}}_2,{\cal{F}}_3)$
		in the hierarchical architecture.
		
		\item For any initial point $\overline{x}_0=(x_0,y_0,z_0)\in X_1\times X_2\times X_3$,
		the Picard iteration $\overline{x}_{n+1}:=S(\overline{x}_n)$ converges to $\overline{x}^{\,*}$.
		
		\item Let $k=\frac{k_1 + k_2 + k_3}{1 - k_2 - k_3}$. Then, for all $n\ge 1$:
		\begin{itemize}
			\item A priori estimate:
			$M_1(\overline{x_n},\overline{x^*})
			\le \frac{k^n}{1-k}\,M_1(\overline{x_1},\overline{x_0})$
			\item A posteriori estimate:
			$M_1(\overline{x_n},\overline{x^*})
			\le \frac{k}{1-k}\,M_1(\overline{x_n},\overline{x_{n-1}})$
			\item Rate of convergence:
			$M_1(\overline{x_n},\overline{x^*})
			\le k\,M_1(\overline{x_{n-1}},\overline{x}^{\,*})$.
		\end{itemize}
	\end{enumerate}
	
	If in addition $X_1=X_2=X_3=X$, $d_1=d_2=d_3=d$, and $S$ satisfies the local triple cross symmetry property at $(x^*,y^*,z^*)$, then $x^*=y^*=z^*$.
\end{theorem}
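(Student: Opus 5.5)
The plan is to reduce everything to a Hardy--Rogers-type argument for the single self-map \(S\) on \(X=X_1\times X_2\times X_3\), equipped with the sum metric \(M_1\) from Definition~\ref{def:distance-41}. I would first note that \((X,M_1)\) is a complete metric space, since each \((X_i,d_i)\) is complete and a sequence is \(M_1\)-Cauchy iff each coordinate is \(d_i\)-Cauchy. With \(\overline{x}_{n+1}=S(\overline{x}_n)\) and \(\delta_n=M_1(\overline{x}_{n+1},\overline{x}_n)\), I apply \eqref{eq:HR-unified-41} with \(\overline{x}=\overline{x}_n\), \(\overline{u}=\overline{x}_{n-1}\). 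This gives \(M_2^S=\delta_n+\delta_{n-1}\). For the other term, \(M_3^S=M_1(\overline{x}_n,\overline{x}_n)+M_1(\overline{x}_{n-1},\overline{x}_{n+1})\le \delta_{n-1}+\delta_n\) by the triangle inequality. Hence \((1-k_2-k_3)\delta_n\le (k_1+k_2+k_3)\delta_{n-1}\), that is, \(\delta_n\le k\,\delta_{n-1}\), where \(k<1\) is exactly equivalent to \(k_1+2k_2+2k_3<1\). Summing the geometric series shows \((\overline{x}_n)\) is Cauchy, so it converges to some \(\overline{x}^{\,*}\).

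Next I verify that the limit is a fixed point; I expect this to be the only step needing care, because no continuity of \(S\) is assumed. I write \(M_1(\overline{x}^{\,*},S\overline{x}^{\,*})\le M_1(\overline{x}^{\,*},\overline{x}_{n+1})+M_1(S\overline{x}_n,S\overline{x}^{\,*})\) and bound the last term by \eqref{eq:HR-unified-41}. The unknown quantity \(M_1(\overline{x}^{\,*},S\overline{x}^{\,*})\) enters twice:
\begin{itemize}
\item through \(M_2^S\) with coefficient \(k_2\);
\item through \(M_3^S\) via \(M_1(\overline{x}_n,S\overline{x}^{\,*})\le M_1(\overline{x}_n,\overline{x}^{\,*})+M_1(\overline{x}^{\,*},S\overline{x}^{\,*})\) with coefficient \(k_3\).
\end{itemize}
All remaining terms tend to \(0\) as \(n\to\infty\). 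This yields \((1-k_2-k_3)M_1(\overline{x}^{\,*},S\overline{x}^{\,*})\le 0\), so \(S\overline{x}^{\,*}=\overline{x}^{\,*}\). For uniqueness, suppose \(\overline{x}^{\,*},\overline{u}^{\,*}\) are two fixed points. Then \(M_2^S=0\) and \(M_3^S=2M_1(\overline{x}^{\,*},\overline{u}^{\,*})\), so \(M_1\le (k_1+2k_3)M_1\), which forces equality since \(k_1+2k_3<1\). By Definition~\ref{def:mixed-tripled-fp}, this gives items 1 and 2.

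For the estimates, the a priori bound follows from \(M_1(\overline{x}_n,\overline{x}_{n+m})\le \sum_{j\ge n}\delta_j\le \frac{k^n}{1-k}\delta_0\) after letting \(m\to\infty\). The a posteriori bound follows similarly from \(\delta_j\le k^{j-n+1}\delta_{n-1}\) for \(j\ge n\). For the rate, I apply \eqref{eq:HR-unified-41} with \(\overline{x}=\overline{x}_{n-1}\) and \(\overline{u}=\overline{x}^{\,*}\), writing \(a=M_1(\overline{x}_{n-1},\overline{x}^{\,*})\) and \(b=M_1(\overline{x}_n,\overline{x}^{\,*})\). Then \(M_2^S=M_1(\overline{x}_{n-1},\overline{x}_n)\le a+b\) and \(M_3^S=a+b\), so \(b\le k_1a+(k_2+k_3)(a+b)\), which gives \(b\le k\,a\).

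Finally, for the symmetric case, local triple cross-symmetry gives
\[
S(y^*,z^*,x^*)=\bigl(S^2(x^*,y^*,z^*),S^3(x^*,y^*,z^*),S^1(x^*,y^*,z^*)\bigr)=(y^*,z^*,x^*).
\]
So the cyclic permutation \((y^*,z^*,x^*)\) is also a fixed point of \(S\), and uniqueness forces \((y^*,z^*,x^*)=(x^*,y^*,z^*)\), i.e.\ \(x^*=y^*=z^*\).
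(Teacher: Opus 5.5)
Your proposal is correct and follows the paper's route. The paper also equips $X_1\times X_2\times X_3$ with the sum metric $M_1$ and observes that $S$ is a Hardy--Rogers map; it then simply cites Theorem~\ref{th:1} for existence, uniqueness, convergence and the three estimates, which you instead derive directly with the same constant $k$. For the symmetric case, the paper shows via cross-symmetry that $(y^*,z^*,x^*)$ is fixed by $S$ and plugs it together with $(x^*,y^*,z^*)$ into \eqref{eq:HR-unified-41} to get $M_1\le (k_1+2k_3)M_1$, which is exactly your uniqueness argument written out inline.
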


\begin{proof}
	Let $X := X_1 \times X_2 \times X_3$ and define the metric $M_1\big((x,y,z),(u,v,w)\big)$
	for all $(x,y,z),(u,v,w) \in X$. Then $(X,M_1)$ is complete since each $(X_i,d_i)$ is complete.
	
	For notational convenience, let $\overline{x}=(x,y,z)$ and $\overline{u}=(u,v,w)$.
	
	For the admissible map $S$ selected in the theorem,
	the inequality \eqref{eq:HR-unified-41} reads
	\[
	M_1\big(S(\overline{x}), S(\overline{u})\big)
	\le k_1M_1\big(\overline{x},\overline{u}\big)\\
	+ k_2M_2^{S}\big(\overline{x},\overline{u}\big)
	+k_3M_3^{S}\big((\overline{x}),\overline{u}\big).
	\]
	
	Hence, $S$ is a Hardy--Rogers map on $(X,M_1)$ with constants
	$\left(k_1,k_2,k_3\right)$.
	
	Since $k_1 + 2k_2 + 2k_3 < 1$,
	Theorem~\ref{th:1} ensures
	the existence and uniqueness of a fixed point $(x^*,y^*,z^*) \in X$,
	together with convergence of the Picard iteration
	\[
	(x_{n+1},y_{n+1},z_{n+1})=S(x_n,y_n,z_n)
	\]
	for any initial point $(x_0,y_0,z_0) \in X$ and $n=0,1,2,\dots$.
	
	Moreover, the stated error estimates follow directly with
	$k = \frac{k_1 + k_2 + k_3}{1 - k_2 - k_3}$.
	
	Assume $X_1=X_2=X_3=X$ and $d_1=d_2=d_3=d$, and let $(x^*,y^*,z^*)$ be the unique mixed tripled fixed point for the admissible map $S$, i.e., $S(x^*,y^*,z^*)=(x^*,y^*,z^*)$.
	
	Assume that the local triple cross-symmetry property
	of $S$ holds at $(x^*,y^*,z^*)$, i.e.,
	\[
	\begin{aligned}
		S^1(y^*,z^*,x^*) &= S^2(x^*,y^*,z^*),\\
		S^2(y^*,z^*,x^*) &= S^3(x^*,y^*,z^*),\\
		S^3(y^*,z^*,x^*) &= S^1(x^*,y^*,z^*).
	\end{aligned}
	\]

	We will apply the Hardy--Rogers inequality \eqref{eq:HR-unified-41} to the triples
	$(x,y,z)=(x^*,y^*,z^*)=\overline{x^*}$ and $(u,v,w)=(y^*,z^*,x^*)=\overline{y^*}$.
	
	Denote the left-hand side by
	$S_0=\sum_{j=1}^3d\big(S^j(\overline{x^*}),S^j(\overline{y^*})\big)$.
	
	Since $(x^*,y^*,z^*)$ is a tripled fixed point of $S$, i.e., $S^1(x^*,y^*,z^*)=x^*$, $S^2(x^*,y^*,z^*)=y^*$, $S^3(x^*,y^*,z^*)=z^*$, 
	and the admissible map \(S\) satisfies the local
	triple cross-symmetry property at the mixed tripled fixed point $(x^*,y^*,z^*)$, i.e.,
	\begin{align*}
		S^1(y^*,z^*,x^*)
		&=S^2(x^*,y^*,z^*)=y^*,\\
		S^2(y^*,z^*,x^*)
		&=S^3(x^*,y^*,z^*)=z^*,\\
		S^3(y^*,z^*,x^*)
		&=S^1(x^*,y^*,z^*)=x^*,
	\end{align*}
	we obtain
	\[
	\begin{array}{lll}
		S_0&=&\displaystyle\sum_{j=1}^3d\big(S^j(\overline{x^*}),S^j(\overline{y^*})\big)\\[8pt]
		&=&d(x^*,y^*)+d(y^*,z^*)+d(z^*,x^*)=M_1\big((x^*,y^*,z^*),(y^*,z^*,x^*)\big).
	\end{array}
	\]
	
	For the right-hand side, the terms in \eqref{eq:HR-unified-41} evaluate as follows:
	\[
	M_2^{S}\big(\overline{x^*},\overline{y^*}\big)
	=M_1\big(\overline{x^*},S(\overline{x^*})\big)
	+ M_1\big(\overline{y^*},S(\overline{y^*})\big).
	\]
	
	The first addend $M_1\big(\overline{x^*},S(\overline{x^*})\big)$ is zero since $(x^*,y^*,z^*)$ is a tripled fixed point of $S$.
	
	Using the local triple cross-symmetry property, we obtain
	\[
	S(y^*,z^*,x^*)=\big(S^1(y^*,z^*,x^*),S^2(y^*,z^*,x^*),S^3(y^*,z^*,x^*)\big)=(y^*,z^*,x^*),
	\]
	so the second addend $M_1\big(\overline{y^*},S(\overline{y^*})\big)$ is zero, too. Thus
	\[
	M_2^{S}\big((x^*,y^*,z^*),(y^*,z^*,x^*)\big)=0.
	\]
	
	Similarly,
	\[
	\begin{array}{lll}
		M_3^{S}\big(\overline{x^*},\overline{y^*}\big)
		&=& M_1\big(\overline{x^*},S(\overline{y^*})\big)
		+ M_1\big(\overline{y^*},S(\overline{x^*})\big)\\[8pt]
		&=& M_1\big(\overline{x^*},\overline{y^*}\big)
		+ M_1\big(\overline{y^*},\overline{x^*}\big)= 2M_1\big(\overline{x^*},\overline{y^*}\big).
	\end{array}
	\]
	
	Therefore, \eqref{eq:HR-unified-41} gives
	\[
	\begin{array}{lll}
		S_0&=&M_1\big(\overline{x^*},\overline{y^*}\big)\leq k_1M_1\big(\overline{x^*},\overline{y^*}\big)+2k_3M_1\big(\overline{x^*},\overline{y^*}\big)\\[8pt]
		&=& \big(k_1+2k_3\big)M_1\big(\overline{x^*},\overline{y^*}\big).
	\end{array}
	\]
	
	Since $k_1+2k_3<1$ (it follows from $k_1+2k_2+2k_3<1$), we conclude that
	\[
	M_1\big((x^*,y^*,z^*),(y^*,z^*,x^*)\big)=0,
	\]
	i.e., $d(x^*,y^*)=d(y^*,z^*)=d(z^*,x^*)=0$, which yields $x^*=y^*=z^*$.
\end{proof}

\section{Market Equilibrium in the Stackelberg Model}


{
	\subsection{Set-up and a Characterization}
	\label{subsec:characterization}
	
	We reformulate the first-order conditions for the canonical
	Stackelberg triopoly as a hierarchical fixed-point problem. The
	construction follows the order in which the firms make their
	decisions. Firm~3 chooses its quantity after observing the quantities
	of Firms~1 and~2; Firm~2 anticipates the stationary response of
	Firm~3; and Firm~1 anticipates the stationary responses of both
	subsequent firms.
	
	For Firm~3, define the derivative-generated map
	\begin{equation}\label{eq:def-F3}
		F_3(x,y,z)
		=
		z+\frac{\partial\Pi_3}{\partial z}(x,y,z).
	\end{equation}
	Then
	\begin{equation}\label{eq:F3-equivalence}
		F_3(x,y,z)=z
		\quad\Longleftrightarrow\quad
		\frac{\partial\Pi_3}{\partial z}(x,y,z)=0.
	\end{equation}
	
	Assume that, for every \((x,y)\in X_1\times X_2\), the fixed-point
	equation
	\begin{equation}\label{eq:F3-fixed-point}
		F_3(x,y,z)=z
	\end{equation}
	has a unique solution, denoted by
	\(z=b_3(x,y)\).
	The stationary response \(b_3:X_1\times X_2\to X_3\) need not be
	available explicitly; it may be defined implicitly by
	\eqref{eq:F3-fixed-point}. By its definition,
	\begin{equation}\label{eq:b3-identity}
		F_3\bigl(x,y,b_3(x,y)\bigr)=b_3(x,y),
	\end{equation}
	and hence
	\[\frac{\partial\Pi_3}{\partial z}
		\bigl(x,y,b_3(x,y)\bigr)=0.
	\]
	
	It is important to distinguish the stationary response \(b_3(x,y)\)
	from the derivative-generated map \(F_3(x,y,z)\). The former, rather
	than the latter, enters the reduced payoff function of Firm~2:
	\[\Phi_2(x,y)
		=
		\Pi_2\bigl(x,y,b_3(x,y)\bigr).
	\]
	
	Assuming the required differentiability, define
	\begin{equation}\label{eq:def-F2}
		F_2(x,y,z)
		=
		y+\frac{\partial\Phi_2}{\partial y}(x,y).
	\end{equation}
	The third argument is retained so that \(F_2\) has the same domain as
	the maps in the general hierarchical fixed-point framework. Its value
	in the present Stackelberg construction is independent of that
	argument. Therefore,
	\begin{equation}\label{eq:F2-equivalence}
		F_2\bigl(x,y,b_3(x,y)\bigr)=y
		\quad\Longleftrightarrow\quad
		\frac{\partial\Phi_2}{\partial y}(x,y)=0.
	\end{equation}
	
	Assume that, for every \(x\in X_1\), the fixed-point equation
	\begin{equation}\label{eq:F2-fixed-point}
		F_2\bigl(x,y,b_3(x,y)\bigr)=y
	\end{equation}
	has a unique solution, denoted by
	\(y=b_2(x)\).
	Consequently,
	\begin{equation}\label{eq:b2-identity}
		F_2\bigl(
		x,b_2(x),b_3(x,b_2(x))
		\bigr)
		=
		b_2(x),
	\end{equation}
	and hence
	\(\frac{\partial\Phi_2}{\partial y}
		\bigl(x,b_2(x)\bigr)=0\).
	
	Firm~1 anticipates the stationary responses of both subsequent firms.
	Its reduced payoff function is
	\[
		\Phi_1(x)
		=
		\Pi_1\bigl(
		x,b_2(x),b_3(x,b_2(x))
		\bigr).
	\]
	
	Define
	\begin{equation}\label{eq:def-F1}
		F_1(x,y,z)
		=
		x+\frac{d\Phi_1}{dx}(x).
	\end{equation}
	
	The second and third arguments are retained so that \(F_1\) has the
	same domain as the other maps. Since the continuation responses have
	already been incorporated into \(\Phi_1\), the equation
	\[
			F_1\bigl(
		x,b_2(x),b_3(x,b_2(x))
		\bigr)=x
	\]
	is equivalent to
	\begin{equation}\label{eq:F1-equivalence}
		\frac{d\Phi_1}{dx}(x)=0.
	\end{equation}
	
	Thus, the derivative-generated maps \(F_1,F_2,F_3\) determine the
	stationary responses recursively:
	\[
		\begin{aligned}
			F_3(x,y,z)=z
			&\quad\Longrightarrow\quad z=b_3(x,y),\\
			F_2\bigl(x,y,b_3(x,y)\bigr)=y
			&\quad\Longrightarrow\quad y=b_2(x),\\
			F_1\bigl(x,b_2(x),b_3(x,b_2(x))\bigr)=x.
		\end{aligned}
	\]
	
	To place this recursive construction within the general framework of
	Definition~\ref{def:shorthand-41}, define
	\[
	{\cal{F}}_i:X_1\times X_2\times X_3\to X_i,
	\qquad i=1,2,3,
	\]
	by
	\begin{equation}\label{eq:hat-functions}
		\begin{aligned}
			{\cal{F}}_1(x,y,z)
			&=F_1(x,y,z),\\
			{\cal{F}}_2(x,y,z)
			&=b_2(x),\\
			{\cal{F}}_3(x,y,z)
			&=b_3\bigl(x,b_2(x)\bigr).
		\end{aligned}
	\end{equation}
	
	Let \(\mathcal G_1:X_1\times X_2\times X_3
	\to X_1\times X_2\times X_3\) be the hierarchical map from
	Definition~\ref{def:shorthand-41} generated by
	\(({\cal{F}}_1,{\cal{F}}_2,{\cal{F}}_3)\). Thus,
	\[
		\begin{aligned}
			\mathcal G_1(x,y,z)
			=&
			\Bigl(
			{\cal{F}}_1\bigl(
			x,
			{\cal{F}}_2(x,y,{\cal{F}}_3(x,y,z)),
			{\cal{F}}_3(x,y,z)
			\bigr),\\
			&\qquad
			{\cal{F}}_2(x,y,{\cal{F}}_3(x,y,z)),
			{\cal{F}}_3(x,y,z)
			\Bigr).
		\end{aligned}
	\]
	
	Using \eqref{eq:hat-functions}, we obtain
	\begin{equation}\label{eq:G-trip}
		\begin{aligned}
			\mathcal G_1(x,y,z)
			=
			\Bigl(
			&{\cal{F}}_1\bigl(
			x,b_2(x),b_3(x,b_2(x))
			\bigr),\\
			&b_2(x),\,
			b_3(x,b_2(x))
			\Bigr).
		\end{aligned}
	\end{equation}
	
	The map \(\mathcal G_1\) depends only on the quantity chosen by
	Firm~1. This reflects the backward-induction structure of the
	canonical Stackelberg model: once \(x\) is fixed, the quantities of
	Firms~2 and~3 are determined successively by \(b_2(x)\) and
	\(b_3(x,b_2(x))\).
	
	A triple
	\((x^*,y^*,z^*)\in X_1\times X_2\times X_3\) is called a stationary
	Stackelberg triple if
	\begin{equation}\label{eq:stationary-stackelberg-triple}
		\frac{d\Phi_1}{dx}(x^*)=0,\qquad
		\frac{\partial\Phi_2}{\partial y}(x^*,y^*)=0,\qquad
		\frac{\partial\Pi_3}{\partial z}(x^*,y^*,z^*)=0,
	\end{equation}
	and
	\begin{equation}\label{eq:hierarchical-consistency}
		y^*=b_2(x^*),
		\qquad
		z^*=b_3(x^*,y^*)
		=
		b_3\bigl(x^*,b_2(x^*)\bigr).
	\end{equation}

	\begin{theorem}\label{thm:characterization}
		Assume that, for every \((x,y)\in X_1\times X_2\), the equation
		\(F_3(x,y,z)=z\)
		has a unique solution \(z=b_3(x,y)\), and that, for every
		\(x\in X_1\), the equation
		\(F_2\bigl(x,y,b_3(x,y)\bigr)=y\)
		has a unique solution \(y=b_2(x)\).
		
		Let \(({\cal{F}}_1,{\cal{F}}_2,{\cal{F}}_3)\) be defined by
		\eqref{eq:hat-functions}, and let \(\mathcal G_1\) be the
		corresponding hierarchical map defined by \eqref{eq:G-trip}.
		
		Then a triple
		\((x^*,y^*,z^*)\in X_1\times X_2\times X_3\)
		is a stationary Stackelberg triple if and only if it is a mixed
		tripled fixed point of
		\(({\cal{F}}_1,{\cal{F}}_2,{\cal{F}}_3)\) with respect to the
		hierarchical architecture, i.e., \(\mathcal G_1(x^*,y^*,z^*)
		=
		(x^*,y^*,z^*)\).
	\end{theorem}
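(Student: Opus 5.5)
The argument is a direct two-way verification that uses the explicit form \eqref{eq:G-trip} of $\mathcal G_1$ together with the equivalences \eqref{eq:F3-equivalence}, \eqref{eq:F2-equivalence} and \eqref{eq:F1-equivalence}. The guiding observation is that $\mathcal G_1(x,y,z)$ depends only on $x$. Its fixed-point equation therefore splits into three scalar conditions:
\[
x^*={\cal F}_1\bigl(x^*,b_2(x^*),b_3(x^*,b_2(x^*))\bigr),\qquad y^*=b_2(x^*),\qquad z^*=b_3\bigl(x^*,b_2(x^*)\bigr).
\]
The last two are exactly the hierarchical consistency relations \eqref{eq:hierarchical-consistency}. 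By \eqref{eq:def-F1} and \eqref{eq:F1-equivalence}, the first is equivalent to $\frac{d\Phi_1}{dx}(x^*)=0$.

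\emph{Necessity.} Let $(x^*,y^*,z^*)$ be a stationary Stackelberg triple. Then \eqref{eq:hierarchical-consistency} gives $y^*=b_2(x^*)$ and $z^*=b_3(x^*,b_2(x^*))$, which are the second and third coordinates of $\mathcal G_1(x^*,y^*,z^*)$. For the first coordinate, \eqref{eq:def-F1} gives
\[
{\cal F}_1\bigl(x^*,b_2(x^*),b_3(x^*,b_2(x^*))\bigr)=x^*+\frac{d\Phi_1}{dx}(x^*)=x^*,
\]
since $\frac{d\Phi_1}{dx}(x^*)=0$. Hence $\mathcal G_1(x^*,y^*,z^*)=(x^*,y^*,z^*)$.

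\emph{Sufficiency.} Let $\mathcal G_1(x^*,y^*,z^*)=(x^*,y^*,z^*)$. The second and third coordinates give \eqref{eq:hierarchical-consistency} directly, and the first coordinate gives $\frac{d\Phi_1}{dx}(x^*)=0$. The remaining two stationarity conditions come from the defining identities of the continuation responses. By \eqref{eq:b3-identity} and \eqref{eq:F3-equivalence}, evaluating at $(x,y)=(x^*,y^*)$ and using $z^*=b_3(x^*,y^*)$ gives $\frac{\partial\Pi_3}{\partial z}(x^*,y^*,z^*)=0$. Similarly, \eqref{eq:b2-identity} together with \eqref{eq:F2-equivalence} gives $\frac{\partial\Phi_2}{\partial y}(x^*,b_2(x^*))=0$, that is, $\frac{\partial\Phi_2}{\partial y}(x^*,y^*)=0$. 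Hence all of \eqref{eq:stationary-stackelberg-triple} holds.

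The argument has no genuine obstacle. The one point needing care is bookkeeping: $z^*=b_3(x^*,b_2(x^*))$ must be rewritten as $b_3(x^*,y^*)$ using $y^*=b_2(x^*)$ before \eqref{eq:b3-identity} is invoked at $(x^*,y^*)$. The uniqueness hypotheses are used only to make $b_2$ and $b_3$ well-defined single-valued maps, so that $\mathcal G_1$ is defined at all.
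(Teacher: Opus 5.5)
Your proposal is correct and follows the paper's proof essentially step for step. In both directions you read off the three coordinate equations of \eqref{eq:G-trip}, use the definition of $F_1$ for the leader's condition, and recover the middle player's and follower's conditions from \eqref{eq:b2-identity} and \eqref{eq:b3-identity}, after rewriting $z^*=b_3(x^*,b_2(x^*))$ as $b_3(x^*,y^*)$ just as the paper does.
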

	
	\begin{proof}
		Suppose first that \((x^*,y^*,z^*)\) is a stationary Stackelberg
		triple. From \eqref{eq:hierarchical-consistency}, we have
		\[
		y^*=b_2(x^*)
		\qquad\text{and}\qquad
		z^*
		=
		b_3(x^*,y^*)
		=
		b_3\bigl(x^*,b_2(x^*)\bigr).
		\]
		
		Moreover,
		\(\frac{d\Phi_1}{dx}(x^*)=0\). 
		By the definition of \(F_1\), this is equivalent to
		\[
		F_1\bigl(
		x^*,b_2(x^*),b_3(x^*,b_2(x^*))
		\bigr)
		=
		x^*.
		\]
		
		Therefore, \eqref{eq:G-trip} gives
		\(\mathcal G_1(x^*,y^*,z^*)
		=
		(x^*,y^*,z^*)\).
		
		Conversely, suppose that
		\(\mathcal G_1(x^*,y^*,z^*)
		=
		(x^*,y^*,z^*)\).
		By \eqref{eq:G-trip}, there holds
		\[
		F_1\bigl(
		x^*,b_2(x^*),b_3(x^*,b_2(x^*))
		\bigr)
		=
		x^*,\qquad
		y^*=b_2(x^*),\qquad
		\text{and}\qquad
		z^*=b_3\bigl(x^*,b_2(x^*)\bigr).
		\]
		
		The first equality and the definition of \(F_1\) imply
		\(\frac{d\Phi_1}{dx}(x^*)=0\).
		
		Since \(y^*=b_2(x^*)\), identity \eqref{eq:b2-identity} gives
		\(F_2(x^*,y^*,z^*)=y^*\),
		and hence
		\(\frac{\partial\Phi_2}{\partial y}(x^*,y^*)=0\).
		
		Furthermore,
		\[
		z^*
		=
		b_3\bigl(x^*,b_2(x^*)\bigr)
		=
		b_3(x^*,y^*).
		\]
		
		Identity \eqref{eq:b3-identity} gives
		\(F_3(x^*,y^*,z^*)=z^*\)
		and therefore
		\(\frac{\partial\Pi_3}{\partial z}(x^*,y^*,z^*)=0\).
		
		Thus, \eqref{eq:stationary-stackelberg-triple} and
		\eqref{eq:hierarchical-consistency} are satisfied, so
		\((x^*,y^*,z^*)\) is a stationary Stackelberg triple.
	\end{proof}
	
	Theorem~\ref{thm:characterization} establishes a one-to-one
	correspondence between stationary Stackelberg triples and mixed
	tripled fixed points of
	\(({\cal{F}}_1,{\cal{F}}_2,{\cal{F}}_3)\). Since
	\(\mathcal G_1\) is the hierarchical map generated by these three
	maps, Theorem~\ref{thm:unified-41} can be applied whenever its metric
	and contractive assumptions are satisfied.
	
	At this stage, the characterization concerns the first-order
	conditions. The second-order conditions are considered separately in
	Subsection~\ref{subsec:second-order-conditions}.
	
	\begin{remark}
		The fixed-point characterization is not intrinsically restricted to
		three firms. For an \(n\)-firm canonical hierarchy, backward
		induction produces stationary continuation responses
		\(b_n,b_{n-1},\ldots,b_2\). These responses define an \(n\)-component
		hierarchical operator whose fixed points are equivalent to the
		recursive system of stationary first-order conditions. The triopoly
		case is presented explicitly because it permits the construction and
		the distinction between the backward-induction and
		derivative-generated maps to be stated without burdensome notation.
		An \(n\)-firm version requires the corresponding \(n\)-tupled
		fixed-point framework recalled in the Appendix.
	\end{remark}
	
	\subsection{Sufficient Condition}
	\label{subsec:sufficient-condition}
	
	\begin{cor}\label{cor:tripodal}
		Consider a triopoly market satisfying the following conditions:
		\begin{enumerate}
			\item Each firm \(i\) produces quantities in a closed, nonempty
			subset \(X_i\) of \((\mathbb{R},|\,\cdot\,|)\).
			
			\item There exists a nonempty closed set
			\[
			D\subseteq X_1\times X_2\times X_3
			\]
			such that
			\[
			\mathcal G_1(D)\subseteq D,
			\]
			where \(\mathcal G_1\) is given by \eqref{eq:G-trip}.
			
			\item There exists \(\alpha\in(0,1)\) such that, for all
			\(\overline{x},\overline{u}\in D\),
			\begin{equation}\label{eq:trip-contr-0}
				M_1\bigl(
				\mathcal G_1(\overline{x}),
				\mathcal G_1(\overline{u})
				\bigr)
				\leq
				\alpha M_1(\overline{x},\overline{u}).
			\end{equation}
		\end{enumerate}
		
		Then:
		\begin{enumerate}
			\item There exists a unique stationary Stackelberg triple
			\((\xi,\eta,\theta)\in D\), equivalently,
			\begin{equation}\label{eq:trip-fixed-point}
				\mathcal G_1(\xi,\eta,\theta)
				=
				(\xi,\eta,\theta).
			\end{equation}
			
			\item For every initial point
			\[
			(x_0,y_0,z_0)\in D,
			\]
			the fixed-point iteration
			\begin{equation}\label{eq:trip-iteration}
				(x_{n+1},y_{n+1},z_{n+1})
				=
				\mathcal G_1(x_n,y_n,z_n),
				\qquad n=0,1,2,\ldots,
			\end{equation}
			converges to \((\xi,\eta,\theta)\).
			
			\item If, in addition, \(X_1=X_2=X_3=X\) and
			\(\mathcal G_1\) satisfies the local triple cross symmetry
			property at \((\xi,\eta,\theta)\), then
			\[
			\xi=\eta=\theta.
			\]
		\end{enumerate}
	\end{cor}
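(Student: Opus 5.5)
The plan is to reduce the corollary to Theorem~\ref{thm:unified-41}, applied on the closed invariant set $D$ with $k_1=\alpha$ and $k_2=k_3=0$, and then to translate the conclusions back through Theorem~\ref{thm:characterization}.

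\textbf{Step 1: completeness.} Each $X_i$ is closed in $(\mathbb R,|\cdot|)$, so $(X_i,|\cdot|)$ is complete. Hence $X_1\times X_2\times X_3$ is complete under the sum metric $M_1$. Since $D$ is a closed subset, $(D,M_1)$ is complete as well.

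\textbf{Step 2: restriction to $D$.} Because $\mathcal G_1(D)\subseteq D$, the restriction $\mathcal G_1|_D:D\to D$ is a self-map. Theorem~\ref{thm:unified-41} is stated for a full product space, and $D$ need not be a product. I expect this to be the only real subtlety. I would handle it by observing that the proof of Theorem~\ref{thm:unified-41} only uses that $S$ is a Hardy--Rogers self-map of the complete metric space $(X,M_1)$; the appeal there is to the abstract result, Theorem~\ref{th:1}. So I would apply that abstract statement directly to $(D,M_1)$ and $\mathcal G_1|_D$. In the present case this is simply the Banach contraction principle, since \eqref{eq:trip-contr-0} is inequality \eqref{eq:HR-unified-41} with $k_1=\alpha$ and $k_2=k_3=0$, and $k_1+2k_2+2k_3=\alpha<1$.

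This step yields three things:
\begin{itemize}
\item a unique fixed point $(\xi,\eta,\theta)\in D$ of $\mathcal G_1$;
\item convergence of the iteration \eqref{eq:trip-iteration} from every $(x_0,y_0,z_0)\in D$, with invariance of $D$ keeping all iterates in $D$;
\item the error estimates, with $k=\alpha$.
\end{itemize}

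\textbf{Step 3: identification.} By Theorem~\ref{thm:characterization}, the fixed points of $\mathcal G_1$ are exactly the stationary Stackelberg triples. Its hypotheses, the unique solvability defining $b_3$ and $b_2$, are standing assumptions under which $\mathcal G_1$ in \eqref{eq:G-trip} is defined. The unique fixed point in $D$ is therefore the unique stationary Stackelberg triple in $D$, which gives item 1. Item 2 is the convergence statement from Step 2.

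\textbf{Step 4: symmetry.} For item 3, I would repeat the cross-symmetry argument from the end of the proof of Theorem~\ref{thm:unified-41} with $k_2=k_3=0$. Take $\overline{x}^*=(\xi,\eta,\theta)$ and $\overline{y}^*=(\eta,\theta,\xi)$. Local cross-symmetry together with the fixed-point property gives $\mathcal G_1(\overline{y}^*)=\overline{y}^*$. This argument does not need $\overline{y}^*\in D$: contractivity is only assumed on $D$, so I would instead conclude from the fact that $\overline{y}^*$ is a fixed point.

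\begin{itemize}
\item If $D$ is invariant under cyclic permutation, or $\overline{y}^*\in D$, then uniqueness of the fixed point in $D$ gives $\overline{y}^*=\overline{x}^*$, i.e.\ $\xi=\eta=\theta$. Equivalently, \eqref{eq:trip-contr-0} gives $M_1(\overline{x}^*,\overline{y}^*)\le\alpha M_1(\overline{x}^*,\overline{y}^*)$, which forces equality.
\item If $\overline{y}^*\notin D$, I would use the structure of \eqref{eq:G-trip}. Since $\mathcal G_1$ depends only on its first coordinate, $\mathcal G_1(\overline{y}^*)=\overline{y}^*$ means $\eta$ is itself a fixed-point leader quantity. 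If needed, I would add the mild assumption that $D$ contains the cyclic permutation of the fixed point, as is implicit in the corresponding statement of Theorem~\ref{thm:unified-41}.
\end{itemize}
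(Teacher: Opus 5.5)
Your treatment of items~1 and~2 is the paper's argument. You use completeness of $(D,M_1)$ as a closed subset of the complete product, the contraction principle with $k_1=\alpha$, $k_2=k_3=0$, and Theorem~\ref{thm:characterization} to identify the unique fixed point in $D$ with the unique stationary Stackelberg triple in $D$. You are also right that Theorem~\ref{thm:unified-41} is formulated on a full product. Applying Theorem~\ref{th:1} (here just Banach's principle) to $\mathcal G_1|_D$ on $(D,M_1)$ is the precise version of what the paper calls applying Theorem~\ref{thm:unified-41} ``to the restriction of $\mathcal G_1$ to $D$''.

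For item~3, the paper only says that it follows from the cross-symmetry conclusion of Theorem~\ref{thm:unified-41}. You correctly spotted that this is not justified as it stands. That argument evaluates the contractive inequality at the pair $(\xi,\eta,\theta)$, $(\eta,\theta,\xi)$, but \eqref{eq:trip-contr-0} is only assumed on $D$. If $(\eta,\theta,\xi)\in D$, your first bullet finishes the proof.

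Your second bullet, however, cannot close the other case. That $\eta$ is a second fixed point of the leader map $g(x)=F_1\bigl(x,b_2(x),b_3(x,b_2(x))\bigr)$ is no contradiction, because hypotheses~2--3 say nothing about $\mathcal G_1$ off $D$. Here is an example. Take $X=\mathbb R$ and $\mathcal G_1(x,y,z)=\bigl(g(x),b_2(x),c(x)\bigr)$ with $c(x)=b_3(x,b_2(x))$, as in \eqref{eq:G-trip}. Let $g(x)=(x+1)/2$, $b_2(x)=3-(x-1)/10$, $c(x)=2-(x-1)/10$ on $[0,2]$, and let $g(3)=3$, $b_2(3)=2$, $c(3)=1$. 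For $D=[0,2]\times[2.5,3.5]\times[1.5,2.5]$ we get $\mathcal G_1(D)\subseteq D$, inequality \eqref{eq:trip-contr-0} with $\alpha=7/10$, and the unique fixed point $(1,3,2)$ in $D$. Since $(3,2,1)$ is also a fixed point, all three cross-symmetry identities hold at $(1,3,2)$, yet $1,3,2$ are distinct.

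So no argument that uses only the form \eqref{eq:G-trip} and hypotheses~1--3 can yield item~3, and that is all your proof and the paper's use. The extra hypothesis you propose is therefore necessary, not merely mild: either $(\eta,\theta,\xi)\in D$ (for instance, $D$ invariant under cyclic permutation), or \eqref{eq:trip-contr-0} on all of $X^3$. With it, your argument is complete.
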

	
	\begin{proof}
		Since each \(X_i\) is a closed subset of
		\((\mathbb{R},|\,\cdot\,|)\), the product
		\(X_1\times X_2\times X_3\), equipped with the summing metric
		\(M_1\) from Definition~\ref{def:distance-41}, is complete.
		Consequently, its closed subset \(D\) is also complete.
		
		By assumption, \(\mathcal G_1\) maps \(D\) into itself and satisfies
		the contractive condition \eqref{eq:trip-contr-0}. Therefore,
		Theorem~\ref{thm:unified-41}, applied to the restriction of
		\(\mathcal G_1\) to \(D\) with
		\[
		k_1=\alpha,
		\qquad
		k_2=k_3=0,
		\]
		guarantees that \(\mathcal G_1\) has a unique mixed tripled fixed
		point in \(D\) and that the iteration \eqref{eq:trip-iteration}
		converges to this point for every initial point in \(D\).
		
		By Theorem~\ref{thm:characterization}, the unique mixed tripled fixed
		point of \(\mathcal G_1\) is precisely the unique stationary
		Stackelberg triple. The final assertion follows from the local triple
		cross symmetry conclusion in Theorem~\ref{thm:unified-41}.
		\end{proof}
		
		Economically, the contractive condition means that the cumulative
		effect of a perturbation in the firms' quantities becomes smaller
		after one complete backward-induction update. Thus, changes in the
		leader's quantity, together with the induced changes in the
		stationary continuation responses of the middle player and the
		follower, cannot be amplified by the hierarchical response system.
		The stationary Stackelberg triple is therefore the unique stationary
		solution in the invariant set \(D\). Moreover, the associated
		numerical fixed-point iteration converges to this solution from every
		initial point in \(D\). This conclusion concerns the convergence of
		the computational procedure and does not assert stability under an economic
		best-response dynamic.
	
	\subsection{The Second-Order Conditions}
	\label{subsec:second-order-conditions}
	
	The map \(\mathcal G_1\) in \eqref{eq:G-trip} represents the
	backward-induction structure through the stationary responses
	\(b_2\) and \(b_3\). To analyze the second-order conditions, define
	the derivative-generated map
	\begin{equation}\label{eq:derivative-map}
		\begin{aligned}
			\mathcal T_1(x,y,z)
			=
			\Bigl(
			&F_1\bigl(x,b_2(x),b_3(x,b_2(x))\bigr),\\
			&F_2\bigl(x,y,b_3(x,y)\bigr),
			F_3(x,y,z)
			\Bigr).
		\end{aligned}
	\end{equation}
	
	The maps \(\mathcal G_1\) and \(\mathcal T_1\) are generally
	different away from their fixed points. Nevertheless, their fixed
	points coincide.
	
	\begin{lemma}\label{lem:equivalent-fixed-points}
		Under the assumptions of
		Theorem~\ref{thm:characterization}, the maps
		\(\mathcal G_1\) and \(\mathcal T_1\), defined by
		\eqref{eq:G-trip} and \eqref{eq:derivative-map}, respectively, have
		exactly the same fixed points.
	\end{lemma}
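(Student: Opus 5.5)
We prove the two inclusions between the fixed-point sets directly from the definitions, using the uniqueness hypotheses of Theorem~\ref{thm:characterization} and the identities \eqref{eq:b3-identity} and \eqref{eq:b2-identity}.

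\emph{Fixed points of $\mathcal G_1$ are fixed points of $\mathcal T_1$.} Suppose $\mathcal G_1(x^*,y^*,z^*)=(x^*,y^*,z^*)$. By \eqref{eq:G-trip},
\[
y^*=b_2(x^*),\qquad z^*=b_3(x^*,b_2(x^*))=b_3(x^*,y^*),\qquad F_1\bigl(x^*,b_2(x^*),b_3(x^*,b_2(x^*))\bigr)=x^*.
\]
The first component of $\mathcal T_1$ is the same expression, so it equals $x^*$. Identity \eqref{eq:b2-identity} at $x=x^*$ gives $F_2(x^*,y^*,b_3(x^*,y^*))=y^*$, which is the second component. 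Identity \eqref{eq:b3-identity} at $(x^*,y^*)$ gives $F_3(x^*,y^*,z^*)=z^*$, which is the third component. Hence $\mathcal T_1(x^*,y^*,z^*)=(x^*,y^*,z^*)$.

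\emph{Fixed points of $\mathcal T_1$ are fixed points of $\mathcal G_1$.} Suppose $\mathcal T_1(x^*,y^*,z^*)=(x^*,y^*,z^*)$. Then $F_3(x^*,y^*,z^*)=z^*$, and uniqueness of the solution of $F_3(x^*,y^*,z)=z$ forces $z^*=b_3(x^*,y^*)$. Next, $F_2(x^*,y^*,b_3(x^*,y^*))=y^*$ says $y^*$ solves \eqref{eq:F2-fixed-point} with $x=x^*$, so uniqueness gives $y^*=b_2(x^*)$. It follows that $z^*=b_3(x^*,b_2(x^*))$. Finally, the first component of the fixed-point equation is exactly $F_1(x^*,b_2(x^*),b_3(x^*,b_2(x^*)))=x^*$. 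Comparing with \eqref{eq:G-trip}, we get $\mathcal G_1(x^*,y^*,z^*)=(x^*,y^*,z^*)$.

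\emph{Main obstacle.} The only delicate point is the second inclusion. The second component of $\mathcal T_1$ does not involve $z$ directly, and $y$ enters it as a free variable, so a fixed point of $\mathcal T_1$ has to be pinned to the response curves $y=b_2(x)$ and $z=b_3(x,y)$. This is done by invoking the two uniqueness assumptions in the order $z$ first, then $y$. The order matters: the equation for $y$ already contains $b_3(x^*,y^*)$, so $z^*$ must be identified with $b_3(x^*,y^*)$ before the $F_2$ equation can be read as \eqref{eq:F2-fixed-point}.

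Together with Theorem~\ref{thm:characterization}, this also shows that the fixed points of $\mathcal T_1$ are exactly the stationary Stackelberg triples.
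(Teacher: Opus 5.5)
Your proof is correct and follows the paper's argument: one inclusion uses the uniqueness of the solutions defining \(b_3\) and \(b_2\), the other uses the identities \eqref{eq:b3-identity} and \eqref{eq:b2-identity}, and the first components of \(\mathcal G_1\) and \(\mathcal T_1\) are literally the same expression. One correction to your closing remark: the second component of \(\mathcal T_1\) is \(F_2\bigl(x,y,b_3(x,y)\bigr)\), so \(F_2\bigl(x^*,y^*,b_3(x^*,y^*)\bigr)=y^*\) is already \eqref{eq:F2-fixed-point} at \(x=x^*\) without any information about \(z^*\), and the order in which you invoke the two uniqueness assumptions does not matter.
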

	
	\begin{proof}
		Let \((x^*,y^*,z^*)\) be a fixed point of \(\mathcal T_1\). Then
		\begin{equation}\label{eq:T-fixed-system}
			\begin{aligned}
				F_1\bigl(
				x^*,b_2(x^*),b_3(x^*,b_2(x^*))
				\bigr)&=x^*,\\
				F_2\bigl(x^*,y^*,b_3(x^*,y^*)\bigr)&=y^*,\\
				F_3(x^*,y^*,z^*)&=z^*.
			\end{aligned}
		\end{equation}
		By the uniqueness assumptions in
		\eqref{eq:F2-fixed-point} and \eqref{eq:F3-fixed-point}, the last two
		equalities imply
		\[
		y^*=b_2(x^*)
		\]
		and
		\[
		z^*=b_3(x^*,y^*)
		=
		b_3\bigl(x^*,b_2(x^*)\bigr).
		\]
		Together with the first equality in \eqref{eq:T-fixed-system}, these
		relations show that
		\[
		\mathcal G_1(x^*,y^*,z^*)
		=
		(x^*,y^*,z^*).
		\]
		
		Conversely, let \((x^*,y^*,z^*)\) be a fixed point of
		\(\mathcal G_1\). By \eqref{eq:G-trip},
		\[
		F_1\bigl(
		x^*,b_2(x^*),b_3(x^*,b_2(x^*))
		\bigr)=x^*,
		\]
		\[
		y^*=b_2(x^*)
		\]
		and
		\[
		z^*=b_3\bigl(x^*,b_2(x^*)\bigr)
		=
		b_3(x^*,y^*).
		\]
		Identities \eqref{eq:b2-identity} and \eqref{eq:b3-identity} yield
		\[
		F_2\bigl(x^*,y^*,b_3(x^*,y^*)\bigr)=y^*
		\]
		and
		\[
		F_3(x^*,y^*,z^*)=z^*.
		\]
		Therefore,
		\[
		\mathcal T_1(x^*,y^*,z^*)
		=
		(x^*,y^*,z^*).
		\]
	\end{proof}
	
	\begin{proposition}\label{prop:second-order-conditions}
		Let \((x^*,y^*,z^*)\) be the stationary Stackelberg triple obtained
		in Corollary~\ref{cor:tripodal}. Assume that it is an interior point
		of \(D\), that the functions involved are twice continuously
		differentiable in a neighborhood of this point, and that
		\(\mathcal T_1(D)\subseteq D\).
		
		If there exists \(\beta\in(0,1)\) such that
		\begin{equation}\label{eq:derivative-contraction}
			M_1\bigl(
			\mathcal T_1(\overline{x}),
			\mathcal T_1(\overline{u})
			\bigr)
			\leq
			\beta M_1(\overline{x},\overline{u})
		\end{equation}
		for all \(\overline{x},\overline{u}\in D\), then
		\begin{equation}\label{eq:second-order-conclusions}
			\frac{\partial^2\Pi_3}{\partial z^2}(x^*,y^*,z^*)<0,
			\qquad
			\frac{\partial^2\Phi_2}{\partial y^2}(x^*,y^*)<0,
			\qquad
			\frac{d^2\Phi_1}{dx^2}(x^*)<0.
		\end{equation}
		Consequently, \((x^*,y^*,z^*)\) satisfies the second-order
		sufficient conditions for strict local maximization for all three
		firms.
	\end{proposition}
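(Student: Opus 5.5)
The plan is to read the contraction inequality \eqref{eq:derivative-contraction} one coordinate direction at a time. The key point is that the three components of \(\mathcal T_1\) in \eqref{eq:derivative-map} depend on progressively more variables. The first component depends only on \(x\). The second depends only on \((x,y)\), because \(F_2\) ignores its third argument and its value is \(y+\frac{\partial\Phi_2}{\partial y}(x,y)\). Only the third component depends on \(z\). Since \((x^*,y^*,z^*)\) is an interior point of \(D\), there is \(\delta>0\) such that every point obtained by changing one coordinate by \(|h|<\delta\) still lies in \(D\). For such pairs we can drop the nonnegative extra terms on the left side of \eqref{eq:derivative-contraction} and keep only the component of interest.

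\emph{Follower.} Take \(\overline{x}=(x^*,y^*,z^*+h)\) and \(\overline{u}=(x^*,y^*,z^*)\). Then \(M_1(\overline{x},\overline{u})=|h|\), and the first two components of \(\mathcal T_1\) coincide at these two points. Hence \eqref{eq:derivative-contraction} gives
\[
\Bigl|h+\tfrac{\partial\Pi_3}{\partial z}(x^*,y^*,z^*+h)-\tfrac{\partial\Pi_3}{\partial z}(x^*,y^*,z^*)\Bigr|\le\beta|h| .
\]
Dividing by \(|h|\) and letting \(h\to0\) uses the \(C^2\) hypothesis. This yields \(\bigl|1+\frac{\partial^2\Pi_3}{\partial z^2}(x^*,y^*,z^*)\bigr|\le\beta\), hence \(\frac{\partial^2\Pi_3}{\partial z^2}(x^*,y^*,z^*)\le\beta-1<0\).

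\emph{Middle player.} Take \(\overline{x}=(x^*,y^*+h,z^*)\) and \(\overline{u}=(x^*,y^*,z^*)\). The first components agree, since they depend on \(x\) only. The third components may differ, but we discard that term by the bound from below \(M_1\ge\) (second-coordinate distance). This gives \(\bigl|h+\frac{\partial\Phi_2}{\partial y}(x^*,y^*+h)-\frac{\partial\Phi_2}{\partial y}(x^*,y^*)\bigr|\le\beta|h|\). In the limit we get \(\bigl|1+\frac{\partial^2\Phi_2}{\partial y^2}(x^*,y^*)\bigr|\le\beta\), so \(\frac{\partial^2\Phi_2}{\partial y^2}(x^*,y^*)<0\).

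\emph{Leader.} Take \(\overline{x}=(x^*+h,y^*,z^*)\) and keep only the first component, which is \(x+\Phi_1'(x)\). This yields \(|1+\Phi_1''(x^*)|\le\beta\), and therefore \(\Phi_1''(x^*)<0\).

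By Lemma~\ref{lem:equivalent-fixed-points}, \((x^*,y^*,z^*)\) is also a fixed point of \(\mathcal T_1\), so the first-order conditions hold at it. Together with the three strict negativity statements above, these are exactly the second-order sufficient conditions for strict local maximization at the three stages.

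The main obstacle is the regularity bookkeeping rather than the inequalities themselves. We need the one-coordinate perturbations to remain in \(D\), which is what interiority provides. We also need \(\frac{\partial\Phi_2}{\partial y}\) and \(\Phi_1'\) to be differentiable at the relevant points. This requires \(b_3\) and \(b_2\) to be \(C^2\) via the implicit function theorem, as allowed by the standing remark, so that \(\Phi_2\) and \(\Phi_1\) are twice differentiable and the difference quotients converge to the second derivatives.
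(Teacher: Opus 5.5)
Your proposal is correct and follows essentially the same route as the paper's proof. You perturb one coordinate at a time from the fixed point, use the triangular dependence of the components of $\mathcal T_1$ to discard the nonnegative extra terms in $M_1$, and pass to the limit in the difference quotients to get $\bigl|1+\frac{\partial^2\Pi_3}{\partial z^2}(x^*,y^*,z^*)\bigr|\le\beta$, and analogously for $\Phi_2$ and $\Phi_1$, so each second derivative is at most $\beta-1<0$. Your explicit remarks on interiority, which keeps the perturbed points in $D$, and on the regularity of $b_2,b_3$ via the implicit function theorem make explicit what the paper's proof uses tacitly.
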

	
	\begin{proof}
		By Lemma~\ref{lem:equivalent-fixed-points},
		\((x^*,y^*,z^*)\) is a fixed point of \(\mathcal T_1\). Hence
		\eqref{eq:T-fixed-system} holds. In view of
		\eqref{eq:F1-equivalence}, \eqref{eq:F2-equivalence}, and
		\eqref{eq:F3-equivalence}, the corresponding first-order conditions
		are satisfied.
		
		First, apply \eqref{eq:derivative-contraction} to
		\[
		\overline{x}=(x^*,y^*,z^*),
		\qquad
		\overline{u}=(x^*,y^*,z^*+\Delta z).
		\]
		The first two components of \(\mathcal T_1\) are independent of
		\(z\). Therefore,
		\[
		\left|
		F_3(x^*,y^*,z^*)
		-
		F_3(x^*,y^*,z^*+\Delta z)
		\right|
		\leq
		\beta|\Delta z|.
		\]
		Dividing by \(|\Delta z|\) and passing to the limit as
		\(\Delta z\to0\), we obtain
		\[
		\left|
		\frac{\partial F_3}{\partial z}(x^*,y^*,z^*)
		\right|
		\leq\beta.
		\]
		By \eqref{eq:def-F3},
		\[
				\frac{\partial^2\Pi_3}{\partial z^2}(x^*,y^*,z^*)=
			\frac{\partial F_3}{\partial z}(x^*,y^*,z^*)-1\leq
			\left|
			\frac{\partial F_3}{\partial z}(x^*,y^*,z^*)
			\right|-1\leq
			\beta-1<0.
		\]
		
		Next, apply \eqref{eq:derivative-contraction} to
		\[
		\overline{x}=(x^*,y^*,z^*),
		\qquad
		\overline{u}=(x^*,y^*+\Delta y,z^*).
		\]
		The first component of \(\mathcal T_1\) is independent of \(y\).
		Using also the independence of \(F_2\) from its third argument, we
		obtain
		\[
		\begin{aligned}
			&
			\left|
			F_2\bigl(x^*,y^*,b_3(x^*,y^*)\bigr)
			-
			F_2\bigl(
			x^*,y^*+\Delta y,b_3(x^*,y^*+\Delta y)
			\bigr)
			\right|\\
			&\quad+
			\left|
			F_3(x^*,y^*,z^*)
			-
			F_3(x^*,y^*+\Delta y,z^*)
			\right|
			\leq
			\beta|\Delta y|.
		\end{aligned}
		\]
		Dividing by \(|\Delta y|\) and passing to the limit as
		\(\Delta y\to0\), we find
		\[
		\left|
		\frac{\partial F_2}{\partial y}
		\bigl(x^*,y^*,b_3(x^*,y^*)\bigr)
		\right|
		+
		\left|
		\frac{\partial F_3}{\partial y}(x^*,y^*,z^*)
		\right|
		\leq\beta.
		\]
		In particular,
		\[
		\left|
		\frac{\partial F_2}{\partial y}
		\bigl(x^*,y^*,b_3(x^*,y^*)\bigr)
		\right|
		\leq\beta.
		\]
		By \eqref{eq:def-F2},
		\[
		\begin{aligned}
			\frac{\partial^2\Phi_2}{\partial y^2}(x^*,y^*)
			&=
			\frac{\partial F_2}{\partial y}
			\bigl(x^*,y^*,b_3(x^*,y^*)\bigr)-1\\
			&\leq
			\left|
			\frac{\partial F_2}{\partial y}
			\bigl(x^*,y^*,b_3(x^*,y^*)\bigr)
			\right|-1\\
			&\leq
			\beta-1<0.
		\end{aligned}
		\]
		
		Finally, apply \eqref{eq:derivative-contraction} to
		\[
		\overline{x}=(x^*,y^*,z^*),
		\qquad
		\overline{u}=(x^*+\Delta x,y^*,z^*).
		\]
		We obtain
		\[
		\begin{aligned}
			&
			\left|
			F_1\bigl(
			x^*,b_2(x^*),b_3(x^*,b_2(x^*))
			\bigr)
			-
			F_1\bigl(
			x^*+\Delta x,
			b_2(x^*+\Delta x),
			b_3(x^*+\Delta x,b_2(x^*+\Delta x))
			\bigr)
			\right|\\
			&\quad+
			\left|
			F_2\bigl(x^*,y^*,b_3(x^*,y^*)\bigr)
			-
			F_2\bigl(
			x^*+\Delta x,y^*,b_3(x^*+\Delta x,y^*)
			\bigr)
			\right|\\
			&\quad+
			\left|
			F_3(x^*,y^*,z^*)
			-
			F_3(x^*+\Delta x,y^*,z^*)
			\right|
			\leq
			\beta|\Delta x|.
		\end{aligned}
		\]
		Dividing by \(|\Delta x|\), passing to the limit as
		\(\Delta x\to0\), and retaining the first nonnegative term, we obtain
		\[
		\left|
		\frac{d}{dx}
		F_1\bigl(
		x,b_2(x),b_3(x,b_2(x))
		\bigr)\bigg|_{x=x^*}
		\right|
		\leq\beta.
		\]
		By \eqref{eq:def-F1},
		\[
		F_1\bigl(
		x,b_2(x),b_3(x,b_2(x))
		\bigr)
		=
		x+\frac{d\Phi_1}{dx}(x).
		\]
		Consequently,
		\[
		\begin{aligned}
			\frac{d^2\Phi_1}{dx^2}(x^*)
			&=
			\frac{d}{dx}
			F_1\bigl(
			x,b_2(x),b_3(x,b_2(x))
			\bigr)\bigg|_{x=x^*}-1\\
			&\leq
			\left|
			\frac{d}{dx}
			F_1\bigl(
			x,b_2(x),b_3(x,b_2(x))
			\bigr)\bigg|_{x=x^*}
			\right|-1\\
			&\leq
			\beta-1<0.
		\end{aligned}
		\]
		Thus, all three inequalities in
		\eqref{eq:second-order-conclusions} hold.
	\end{proof}
	
	The conclusion of
	Proposition~\ref{prop:second-order-conditions} is local. Global
	optimality requires additional assumptions, such as concavity of the
	corresponding payoff functions on their entire strategy sets.
	
	When the component functions of \(\mathcal G_1\) are continuously
	differentiable on a convex invariant set \(D\), the contractive
	condition in Corollary~\ref{cor:tripodal} can be verified through
	bounds on the Jacobian of \(\mathcal G_1\). In the metric \(M_1\), it
	is sufficient that the induced column-sum norm of
	\(D\mathcal G_1\) be uniformly smaller than one on \(D\). Through the
	implicit-function formulas for \(b_2\) and \(b_3\), these bounds can
	be expressed in terms of the first and second derivatives of the
	inverse demand and cost functions. In particular, stronger own-output
	curvature reduces the magnitude of the continuation-response
	derivatives, whereas strong cross-output effects may prevent
	contraction. This is related to the familiar role of strategic
	substitutability and contraction conditions in oligopoly models; see
	Vives~\cite[pp.~47--51]{Vives}. The condition used here is a uniform 
	sufficient condition adapted to
	the hierarchical backward-induction operator. It is generally
	stronger than a local condition based only on the Jacobian evaluated
	at the stationary solution. Moreover, the present condition concerns
	the convergence of the numerical fixed-point iteration, whereas the
	stability conditions considered in standard oligopoly dynamics
	typically concern an explicitly specified economic adjustment
	process.}

\subsection{A Contractive Illustrative Example}
\label{subsec:contractive-example}

{\begin{example}\label{ex:contractive-example}
	Let \(x\), \(y\), and \(z\) denote the output levels of the three
	firms, and consider the linear inverse demand function
	\(P(Z)=A-BZ\), \(Z=x+y+z\).
	Assume that all firms have the quadratic cost functions
	\(c_i(u)=Cu^2\) for \(i=1,2,3\). 
	
	Let \(A=12\), \(B=\frac{1}{20}\), and \(C=\frac{1}{40}\).
	Thus, the inverse demand function and the cost functions are
	\(P(Z)=12-\frac{1}{20}Z\) and \(c_i(u)=\frac{1}{40}u^2\) for
	\(i=1,2,3\).
\end{example}

For the parameter values in Example~\ref{ex:contractive-example}, the
derivative-generated map \(\mathcal T_1\), defined in
\eqref{eq:derivative-map}, is given by
\[
	\mathcal T_1(x,y,z)
	=
	\left(
	\frac{40}{7}+\frac{379}{420}x,\,
	8-\frac{1}{30}x+\frac{53}{60}y,\,
	12-\frac{1}{20}x-\frac{1}{20}y+\frac{17}{20}z
	\right).
\]

Let
\(\overline{x}=(x,y,z)
\qquad\text{and}\qquad
\overline{u}=(u,v,w)\).
Using the sum metric \(M_1\), we obtain
\[
\begin{aligned}
	&M_1\bigl(
	\mathcal T_1(\overline{x}),
	\mathcal T_1(\overline{u})
	\bigr)\\
	&\quad\leq
	\left(
	\frac{379}{420}
	+
	\frac{1}{30}
	+
	\frac{1}{20}
	\right)|x-u|
	+
	\left(
	\frac{53}{60}
	+
	\frac{1}{20}
	\right)|y-v|
	+
	\frac{17}{20}|z-w|\\
	&\quad=
	\frac{69}{70}|x-u|
	+
	\frac{14}{15}|y-v|
	+
	\frac{17}{20}|z-w|.
\end{aligned}
\]

Since
\(\frac{14}{15}<\frac{69}{70}\) and \(\frac{17}{20}<\frac{69}{70}\),
it follows that
\(M_1\bigl(
	\mathcal T_1(\overline{x}),
	\mathcal T_1(\overline{u})
	\bigr)
	\leq
	\frac{69}{70}
	M_1(\overline{x},\overline{u})\).

Thus, \(\mathcal T_1\) is a contraction.

Consequently, \(\mathcal T_1\) has a unique fixed point, and the
fixed-point iteration
\begin{equation}\label{eq:example-T-iteration}
	(x_{n+1},y_{n+1},z_{n+1})
	=
	\mathcal T_1(x_n,y_n,z_n),
	\qquad n=0,1,2,\ldots,
\end{equation}
converges to it from every initial point in the invariant domain.

The fixed-point equations are
\[
\left\{
\begin{aligned}
	x&=\frac{40}{7}+\frac{379}{420}x,\\
	y&=8-\frac{1}{30}x+\frac{53}{60}y,\\
	z&=12-\frac{1}{20}x-\frac{1}{20}y+\frac{17}{20}z.
\end{aligned}
\right.
\]

Solving this system gives
\((x^*,y^*,z^*)
	=
	\left(
	\frac{2400}{41},\,
	\frac{14880}{287},\,
	\frac{12400}{287}
	\right)\).

Numerically,
\((x^*,y^*,z^*)
\approx
(58.537,\;51.847,\;43.206)\) (Figure \ref{fig:contractive-iteration-firms}).

By the equivalence of the fixed points of \(\mathcal G_1\) and
\(\mathcal T_1\), established in
Lemma~\ref{lem:equivalent-fixed-points}, this fixed point coincides
with the stationary Stackelberg triple obtained by backward induction:
\(y^*=b_2(x^*)\) and \(z^*=b_3(x^*,y^*)\).

\begin{figure}[!htbp]
	\centering
	\includegraphics[width=0.82\textwidth]{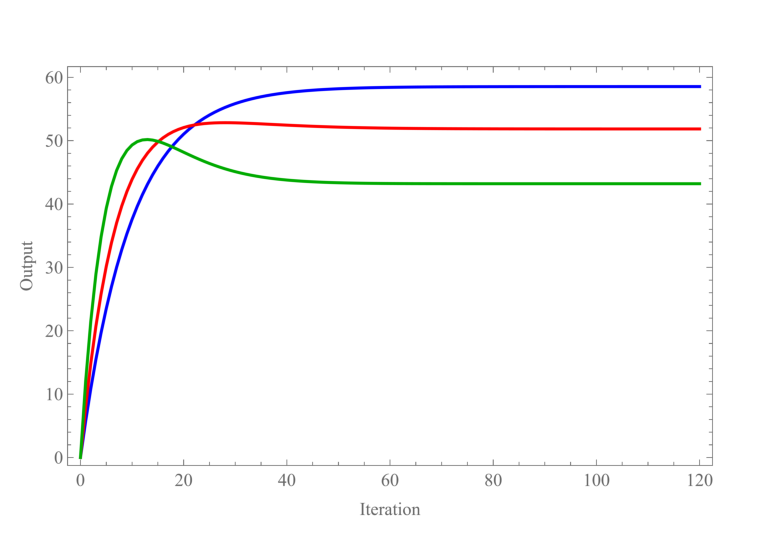}
	\caption{Convergence of the firm-level quantities under the fixed-point iteration \(\mathcal T_1\). Blue: Firm~1. Red: Firm~2. Green: Firm~3. The iterates converge to the unique fixed point \((x^*,y^*,z^*)\approx(58.537,51.847,43.206)\).}
	\label{fig:contractive-iteration-firms}
\end{figure}

Consequently, the aggregate output at the stationary point is
\(Q^*=x^*+y^*+z^*
=\frac{44080}{287}
\approx 153.589\),
and the convergence of the aggregate output to \(Q^*\) is illustrated in
Figure~\ref{fig:contractive-iteration-aggregate}.

\begin{figure}[!htbp]
	\centering
	\includegraphics[width=0.82\textwidth]{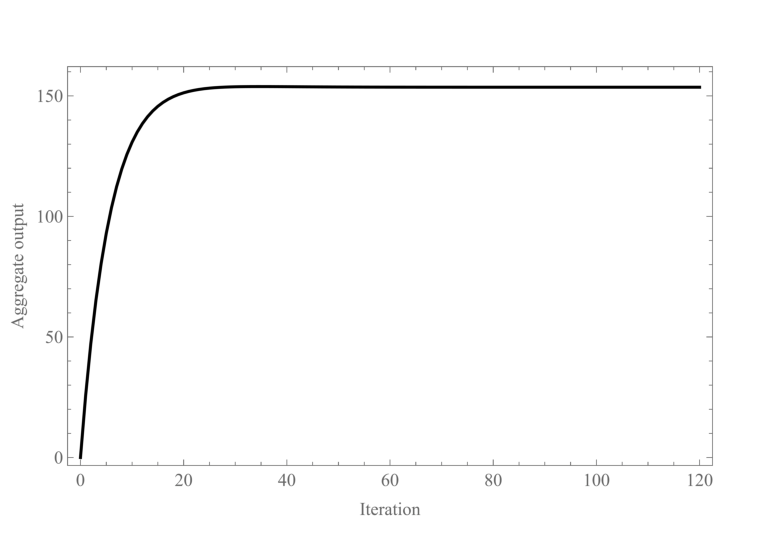}
	\caption{Convergence of aggregate output under the fixed-point iteration \(\mathcal T_1\).}
	\label{fig:contractive-iteration-aggregate}
\end{figure}

We next verify the second-order conditions. From the third component
of \(\mathcal T_1\), we obtain
\[
\frac{\partial^2\Pi_3}{\partial z^2}(x^*,y^*,z^*)
=
\frac{\partial F_3}{\partial z}(x^*,y^*,z^*)-1
=
\frac{17}{20}-1
=
-\frac{3}{20}<0.
\]
Similarly, the second component gives
\[
\frac{\partial^2\Phi_2}{\partial y^2}(x^*,y^*)
=
\frac{\partial F_2}{\partial y}(x^*,y^*)-1
=
\frac{53}{60}-1
=
-\frac{7}{60}<0,
\]
whereas the first component gives
\[
\frac{d^2\Phi_1}{dx^2}(x^*)
=
\frac{dF_1}{dx}(x^*)-1
=
\frac{379}{420}-1
=
-\frac{41}{420}<0.
\]
Therefore, the second-order sufficient conditions are satisfied for
all three firms at the stationary Stackelberg triple.

Since the corresponding reduced payoff functions are quadratic and
strictly concave in their respective decision variables, the
stationary triple is the unique Stackelberg equilibrium for the
parameter values in this example.

It is important to emphasize that
\eqref{eq:example-T-iteration} is a numerical fixed-point iteration.
It is not a best-response dynamic and should not be interpreted as
describing the actual adjustment of the firms' production levels over
time.

We next consider an example in which the first- and second-order
optimality conditions are satisfied, but the corresponding
derivative-generated fixed-point iteration is not convergent from
arbitrary initial points.

\begin{example}\label{ex:noncontractive-iteration}
	considers another model in the same class of linear-demand, quadratic-cost
	models as that in
	Example~\ref{ex:contractive-example}, i.e.,
	\(P(Z)=A-BZ\), \(Z=x+y+z\),
	and assume that all three firms have the quadratic cost functions
	\(c_i(u)=Cu^2\) for \(i=1,2,3\).
	
	In Example~\ref{ex:contractive-example}:
	\(A=12\), \(B=\frac{1}{20}\), and \(C=\frac{1}{40}\). In the present
	illustration, we take
	\(A=30996\), \(B=1\), and \(C=\frac{1}{40}\).
	
	Thus,
	\(P(Z)=30996-Z\)
	and
	\(c_i(u)=\frac{1}{40}u^2\) for \(i=1,2,3\).
	The cost coefficient therefore remains unchanged, whereas the slope of the inverse demand function changes from
	\(1/20\) to \(1\).
	
	For these parameter values, the derivative-generated map is
	\begin{equation}\label{eq:noncontractive-map}
		\mathcal T_1(x,y,z)
		=
		\left(
		\frac{7318836}{881}
		-\frac{2990591}{722420}x,\,
		15876-\frac{21}{41}x-\frac{61}{820}y,\,
		30996-x-y-\frac{21}{20}z
		\right).
	\end{equation}
	
	The corresponding second derivatives are
	\[
	\frac{\partial^2\Pi_3}{\partial z^2}
	=
	-\frac{21}{20}-1
	=
	-\frac{41}{20}<0,
	\]
	\[
	\frac{\partial^2\Phi_2}{\partial y^2}
	=
	-\frac{61}{820}-1
	=
	-\frac{881}{820}<0,
	\]
	and
	\[
	\frac{d^2\Phi_1}{dx^2}
	=
	-\frac{2990591}{722420}-1
	<0.
	\]
	Hence the fixed point satisfies the second-order sufficient
	conditions. Since the corresponding reduced payoff functions are
	quadratic and strictly concave in their respective decision
	variables, the stationary solution determines the unique
	Stackelberg equilibrium for this example.
	
	However, \(\mathcal T_1\) is not a contraction with respect to
	\(M_1\). 
	
	Indeed,
	\(\frac{2990591}{722420}
	+\frac{21}{41}
	+1
	>1\), 
	\(\frac{61}{820}+1>1\), and 
	\(\frac{21}{20}>1\).
	Therefore, the contractive condition
	\eqref{eq:derivative-contraction} is not satisfied.
	
	The fixed-point system associated with
	\eqref{eq:noncontractive-map} is
	\[
		\left\{
		\begin{array}{rcl}
			\displaystyle
			\frac{7318836}{881}
			-\frac{2990591}{722420}x&=&x,\\[8pt]
			\displaystyle
			15876-\frac{21}{41}x-\frac{61}{820}y&=&y,\\[8pt]
			\displaystyle
			30996-x-y-\frac{21}{20}z&=&z.
		\end{array}
		\right.
	\]
	
	The linear part of \(\mathcal T_1\) is represented by the lower
	triangular matrix
	\[
		A=
		\begin{pmatrix}
			-\dfrac{2990591}{722420} & 0 & 0\\[8pt]
			-\dfrac{21}{41} & -\dfrac{61}{820} & 0\\[8pt]
			-1 & -1 & -\dfrac{21}{20}
		\end{pmatrix}.
	\]
	
	Its eigenvalues are its diagonal entries:
	\(-\frac{2990591}{722420}\), \(-\frac{61}{820}\), and \(-\frac{21}{20}\).
	Since
	\(\left|
	-\frac{2990591}{722420}
	\right|>1\) and	\(\left|-\frac{21}{20}\right|>1\), 
	the fixed point is unstable with respect to the numerical iteration
	generated by \(\mathcal T_1\). Consequently, the iteration does not converge from
	arbitrary initial points; in particular, it diverges for generic
	initial points having a nonzero component in an unstable
	eigendirection.
\end{example}

The two examples illustrate the distinction between optimality and
computational convergence. The first- and second-order conditions concern local
optimality and, under appropriate concavity assumptions, global
optimality of the stationary solution. By contrast, the contractive condition concerns
the convergence of the numerical fixed-point iteration. Thus, an
equilibrium may exist and satisfy the required optimality conditions
even when the derivative-generated iteration fails to converge from
arbitrary initial points.}

We next extend our analysis, and the intuition
developed from the numerical examples, to the \(n\)-firm case. This extension serves two purposes. First, it provides a unified analytical framework in which the three-firm outcomes appear as a special case. Second, it allows us to compare the convergence of aggregate output in the two market organizations toward the competitive benchmark \(A/B\), and hence to interpret the limiting behavior of prices, profits, and surplus.

\section{Large-Market Comparison of the Cournot and Hierarchical
	Stackelberg Models}
\label{sec:large-market-comparison}

{We now extend the three-firm analysis to a market with \(n\) firms.
Our purpose is to compare the Cournot and canonical hierarchical
Stackelberg models under a common linear inverse demand function and
two alternative cost specifications. Particular attention is given
to the rates at which aggregate output and market price approach
their respective large-market limits.

The comparison between Cournot and sequential Stackelberg
oligopolies has an established literature. In particular, the
aggregate-output comparison under linear costs is related to the
results of \cite{Anderson_Engers_1992}, while the convergence of sequential
market outcomes toward a competitive limit is related to \cite{Robson1990} and 
\cite{boyer1986perfect}. Accordingly, we do not present the existence
of the large-market limit or the fixed-point representation of an
equilibrium as new contributions. Our analysis instead focuses on
the recursive structure arising under quadratic costs and on an
explicit comparison of the convergence rates under the four
combinations of market organization and cost specification.

The principal results are stated formally in this section. Their
longer algebraic derivations are collected in
Appendix~\ref{app:large-market-proofs}. Numerical values and figures
are used only to illustrate the general results and are presented
separately from the theoretical statements.

\subsection{Model, Notation, and Relation to the Literature}
\label{subsec:large-market-notation}

Consider \(n\) firms producing a homogeneous good. Let \(x_i\geq0\)
denote the output of Firm \(i\), and let
\(Q=\sum_{i=1}^{n}x_i\)
be the aggregate market output. The inverse demand function is
\begin{equation}\label{eq:linear-inverse-demand-n}
	P(Q)=A-BQ,
	\qquad A>0,\quad B>0.
\end{equation}

We consider two common cost specifications. Under linear costs,
\begin{equation}\label{eq:linear-cost-n}
	C_i^{L}(x_i)=cx_i,
	\qquad 0\leq c<A,
\end{equation}
whereas under quadratic costs,
\begin{equation}\label{eq:quadratic-cost-n}
	C_i^{Q}(x_i)=cx_i^2,
	\qquad c>0.
\end{equation}

The corresponding payoff function is
\[
\Pi_i(x_1,\ldots,x_n)
	=
	x_i\left(
	A-B\sum_{j=1}^{n}x_j
	\right)
	-
	C_i(x_i).
\]

The superscript \(C\) denotes the simultaneous Cournot model, and the
superscript \(S\) denotes the canonical hierarchical Stackelberg
model in which the firms choose their quantities sequentially in the
order
\(1,2,\ldots,n\).
Thus, Firm \(1\) is the initial leader and Firm \(n\) is the final
follower. The second superscript identifies the cost specification:
\(L\) for linear costs and \(Q\) for quadratic costs.

We use the notation
\(Q_n^{C,L}\), \(Q_n^{S,L}\), \(Q_n^{C,Q}\), and \(Q_n^{S,Q}\)
for the aggregate equilibrium outputs in the four models. The
corresponding equilibrium prices are denoted by
\begin{equation}\label{eq:price-notation}
	P_n^{X,Y}
	=
	A-BQ_n^{X,Y},
	\qquad
	X\in\{C,S\},\quad
	Y\in\{L,Q\}.
\end{equation}

When individual Stackelberg quantities are required, we write
\(x_{k,n}^{S,Y}\) for \(k=1,\ldots,n,\) and \(Y\in\{L,Q\}\),
where \(k\) denotes the position of the firm in the sequence of
moves. In particular, \(x_{1,n}^{S,Y}\) is the output of the initial
leader and \(x_{n,n}^{S,Y}\) is the output of the final follower.
This notation avoids confusing the position \(k\) of a firm with the
total number \(n\) of firms.

The natural limiting output depends on the cost specification. Under
linear costs, the zero-profit competitive output is
\(M_L
	=
	\frac{A-c}{B}\).
Under quadratic costs, marginal cost at the output of each individual
firm converges to zero as the number of firms increases and individual
output tends to zero. The corresponding large-market output is
\(M_Q
	=
	\frac{A}{B}\).
For brevity, we shall write
\begin{equation}\label{eq:M-notation}
	\widetilde M=M_L=\frac{A-c}{B},
	\qquad
	M=M_Q=\frac{A}{B}.
\end{equation}

Whenever the limits exist, we use
\(Q_\infty^{X,Y}
	=
	\lim_{n\to\infty}Q_n^{X,Y}\) and \(P_\infty^{X,Y}
	=
	\lim_{n\to\infty}P_n^{X,Y}\).

The existence of equilibrium in the finite sequential game follows
from backward induction under the regularity and concavity conditions
imposed on the corresponding continuation problems. The results
below concern the quantities generated by these equilibria, their
comparison with the Cournot quantities, and their rates of convergence
as \(n\to\infty\). Thus, the large-market analysis complements rather
than replaces the standard equilibrium-existence arguments.

For linear costs, the formulas for the sequential quantities and the
comparison with Cournot competition are closely related to
\cite{Anderson_Engers_1992}. The convergence
toward competitive behavior should also be viewed in relation to
\cite{Robson1990}. The results that require separate
derivation here are the recursive representation under quadratic
costs and the explicit comparison of the residual convergence rates.

\subsection{Aggregate Outputs under Quadratic Costs}
\label{subsec:quadratic-aggregate-outputs}

Suppose that all firms have the quadratic cost function
\eqref{eq:quadratic-cost-n}. Introduce the parameter
\(\delta=\frac{c}{B}>0\).
Recall from \eqref{eq:M-notation} that
\(M=\frac{A}{B}\).

We first record the aggregate output in the simultaneous Cournot
model.

\begin{proposition}\label{prop:quadratic-cournot-output}
	Under the inverse demand function
	\eqref{eq:linear-inverse-demand-n} and the quadratic cost function
	\eqref{eq:quadratic-cost-n}, the \(n\)-firm Cournot equilibrium is
	symmetric and satisfies
	\begin{equation}\label{eq:quadratic-cournot-individual-output}
		x_{i,n}^{C,Q}
		=
		\frac{A}{B(n+1)+2c}
		=
		\frac{M}{n+1+2\delta},
		\qquad i=1,\ldots,n.
	\end{equation}
	
	Consequently, the aggregate Cournot output is
	\begin{equation}\label{eq:quadratic-cournot-aggregate-output}
		Q_n^{C,Q}
		=
		\frac{An}{B(n+1)+2c}
		=
		M\frac{n}{n+1+2\delta}.
	\end{equation}
	
	The corresponding market price is
	\begin{equation}\label{eq:quadratic-cournot-price}
		P_n^{C,Q}
		=
		A-BQ_n^{C,Q}
		=
		A\frac{1+2\delta}{n+1+2\delta}.
	\end{equation}
\end{proposition}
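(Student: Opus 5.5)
The proof will derive the first-order conditions, show that every equilibrium is symmetric, and then solve a single scalar equation.

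For fixed outputs of the rivals, write $Q_{-i}=\sum_{j\neq i}x_j$. The payoff of Firm $i$ is
\[
\Pi_i=x_i(A-Bx_i-BQ_{-i})-cx_i^2 .
\]
Its second derivative in $x_i$ is $-2(B+c)<0$, so $\Pi_i$ is strictly concave in $x_i$. Consequently the best response is unique and given by
\[
x_i=\max\Bigl\{0,\frac{A-BQ_{-i}}{2(B+c)}\Bigr\}.
\]
For an interior equilibrium the first-order condition is
\[
A-BQ-Bx_i-2cx_i=0,\qquad i=1,\ldots,n.
\]

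The key step is symmetry. Each first-order condition can be rewritten as
\[
(B+2c)\,x_i=A-BQ .
\]
The right-hand side is the same for every firm, so all $x_i$ are equal, say $x_i=x$. Substituting $Q=nx$ gives
\[
(B(n+1)+2c)\,x=A,
\]
hence $x=A/(B(n+1)+2c)$. Dividing numerator and denominator by $B$ and using $M=A/B$, $\delta=c/B$ yields \eqref{eq:quadratic-cournot-individual-output}. Multiplying by $n$ gives \eqref{eq:quadratic-cournot-aggregate-output}. The price formula \eqref{eq:quadratic-cournot-price} then follows from
\[
A-BM\frac{n}{n+1+2\delta}=A\Bigl(1-\frac{n}{n+1+2\delta}\Bigr).
\]

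The main obstacle is ruling out corner equilibria, where some firm produces $x_i=0$. Suppose $x_i=0$ for some $i$. Then its best response requires $A-BQ\le 0$. At the same time, every active firm $j$ satisfies $(B+2c)x_j=A-BQ\le0$, which forces $x_j=0$. Thus all outputs vanish, so $Q=0$ and $A\le 0$, contradicting $A>0$. Hence every equilibrium is interior, and by the above it is the unique symmetric point.

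Conversely, this point is an equilibrium. It satisfies the first-order conditions, and strict concavity of each $\Pi_i$ in its own output makes those conditions sufficient. This establishes existence, uniqueness and symmetry together.
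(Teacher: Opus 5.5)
Your proof is correct, and its core computation (the first-order condition, substituting $Q=nx$, and using strict concavity $-2(B+c)<0$ for sufficiency) is the same as the paper's. The routes differ in how symmetry is obtained.

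\begin{itemize}
\item The paper simply writes ``at a symmetric Cournot equilibrium'' and solves for the common output. It therefore only shows that the symmetric candidate is an equilibrium.
\item You derive symmetry from the observation that each first-order condition reads $(B+2c)x_i=A-BQ$, whose right-hand side does not depend on $i$. You also exclude boundary equilibria using $A>0$.
\end{itemize}

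Your argument therefore proves what the statement claims: the Cournot equilibrium on $x_i\ge 0$ is unique and symmetric. The paper asserts this but does not establish it. Its version is shorter, but it leaves possible asymmetric or corner equilibria unaddressed.

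One small wording point in your corner argument: $(B+2c)x_j=A-BQ\le 0$ does not ``force $x_j=0$''. It contradicts $x_j>0$, so there are no active firms.
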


The proof follows directly from the symmetric Cournot first-order
conditions and is included in
Appendix~\ref{app:quadratic-outputs} for completeness.

We next consider the canonical hierarchical Stackelberg model. Let
\(Q_n^{S,Q}\) denote the aggregate output in its subgame-perfect
equilibrium. To describe this output, define the normalized sequence
\begin{equation}\label{eq:quadratic-stackelberg-normalization}
	q_n
	=
	\frac{Q_n^{S,Q}}{M},
	\qquad n\geq0.
\end{equation}

\begin{theorem}\label{thm:quadratic-stackelberg-recursion}
	Under the inverse demand function
	\eqref{eq:linear-inverse-demand-n} and the quadratic cost function
	\eqref{eq:quadratic-cost-n}, the normalized aggregate output in the
	canonical hierarchical Stackelberg model satisfies
	\begin{equation}\label{eq:q-initial-value}
		q_0=0
	\end{equation}
	and
	\begin{equation}\label{eq:q-recursion}
		q_n
		=
		q_{n-1}
		+
		\frac{(1-q_{n-1})^2}
		{2(1-q_{n-1}+\delta)},
		\qquad n\geq1.
	\end{equation}
	
	Equivalently,
	\begin{equation}\label{eq:quadratic-stackelberg-aggregate-output}
		Q_n^{S,Q}
		=
		M q_n.
	\end{equation}
	
	In particular,
	\begin{equation}\label{eq:q-first-value}
		q_1
		=
		\frac{1}{2(1+\delta)}
	\end{equation}
	and
	\begin{equation}\label{eq:quadratic-stackelberg-one-firm}
		Q_1^{S,Q}
		=
		\frac{A}{2(B+c)}.
	\end{equation}
\end{theorem}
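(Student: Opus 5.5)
The plan is to prove a stronger, \emph{homogeneity} statement by induction on the number of firms and then specialize it. For \(m\ge 0\) and a residual intercept \(a\in\mathbb{R}\), consider the \(m\)-firm sequential subgame in which Firms \(1,\dots,m\) move in this order and face the residual inverse demand \(a-B\sum_{j}x_j\). The intercept \(a=A-Bs\) arises after predecessors have committed a total output \(s\), and each firm has cost \(cx^2\). The claim is that this subgame has a unique subgame-perfect continuation, and that its aggregate output equals
\[
\mathcal Q_m(a)=\frac{\max\{a,0\}}{B}\,q_m ,
\]
where \(q_m\) is given by \eqref{eq:q-initial-value}--\eqref{eq:q-recursion}. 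Along the way I also show \(0\le q_m<1\). Taking \(a=A\) and \(m=n\) then gives \eqref{eq:quadratic-stackelberg-aggregate-output} with \(M=A/B\). The case \(m=0\) is trivial, with \(\mathcal Q_0\equiv 0\) and \(q_0=0\).

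For the inductive step, fix \(a>0\) and assume the claim for \(m-1\) subsequent firms; write \(q=q_{m-1}<1\). If the first mover chooses \(x\), the followers face intercept \(a-Bx\) and, by hypothesis, produce in total \(\max\{a-Bx,0\}\,q/B\).

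Consider first \(0\le x\le a/B\). The reduced payoff is
\[
\Phi(x)=x\bigl(a-Bx\bigr)(1-q)-cx^2 .
\]
This is a quadratic with second derivative \(-2B(1-q)-2c<0\), so it is strictly concave. Its first-order condition gives
\[
x^\ast=\frac{a(1-q)}{2\bigl(B(1-q)+c\bigr)} .
\]
One checks that \(0<x^\ast<a/B\), so \(\Phi(x^\ast)>0\).

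Now consider \(x>a/B\). The followers produce nothing and the payoff is \(x(a-Bx)-cx^2<0\). Hence \(x^\ast\) is the unique global maximizer.

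The aggregate output of the subgame is therefore
\[
x^\ast+\frac{(a-Bx^\ast)\,q}{B}=\frac{a}{B}\,q+x^\ast(1-q).
\]
Dividing by \(a/B\) and using \(\delta=c/B\) yields exactly \eqref{eq:q-recursion}. In addition,
\[
1-q_m=(1-q)\Bigl(1-\frac{1-q}{2(1-q+\delta)}\Bigr)>0 ,
\]
which closes the induction on \(q_m<1\); nonnegativity \(q_m\ge 0\) is immediate from \eqref{eq:q-recursion}.

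If \(a\le 0\), every feasible output gives \(x(a-Bx-\cdots)-cx^2\le -cx^2\). So all firms choose zero, consistent with \(\max\{a,0\}=0\).

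Finally, \eqref{eq:q-first-value} follows by substituting \(q_0=0\) into the recursion, and multiplying by \(M\) gives \(Q_1^{S,Q}=A/(2(B+c))\).

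The main obstacle is to make the off-path part of subgame perfection honest. The followers' continuation response is only piecewise linear, with a kink at \(x=a/B\), so the leader's reduced payoff is not globally a single quadratic. This is why I carry the \(\max\{a,0\}\) form in the inductive hypothesis and compare the two regions explicitly, as in Example~\ref{ex-new-1}. I also need to verify \(x^\ast<a/B\) and \(q_{m-1}<1\) inside the same induction, because strict concavity and the interiority of every stage depend on them.
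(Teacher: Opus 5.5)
Your proposal is correct and follows essentially the same route as the paper: backward induction over continuation subgames, with the aggregate continuation output linear in the residual capacity $a/B=M-S$ (the paper's $L_m(S)=\lambda_m(M-S)$), the reduced quadratic payoff $B(1-q_{m-1})x(R-x)-cx^2$, and a first-order condition that yields the recursion with $\lambda_m=q_m$. Your $\max\{a,0\}$ bookkeeping and the explicit treatment of the region $x>a/B$ go beyond the paper's proof, which applies the linear continuation formula for all $x$ and so ignores the nonnegativity constraint after off-path histories.
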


The detailed backward-induction derivation of
\eqref{eq:q-recursion}, including the recursive continuation outputs
of the individual firms, is given in
Appendix~\ref{app:quadratic-outputs}. The recursion is stated here in
normalized form because it separates the scale parameter
\(M=A/B\) from the cost-to-demand ratio
\(\delta=c/B\).

The next result describes the qualitative behavior of the aggregate
Stackelberg output.

\begin{proposition}\label{prop:quadratic-stackelberg-monotonicity}
	Under the assumptions of
	Theorem~\ref{thm:quadratic-stackelberg-recursion}, the sequence
	\(\{q_n\}_{n\geq0}\) satisfies
	\begin{equation}\label{eq:q-bounds}
		0\leq q_n<1,
		\qquad n\geq0,
	\end{equation}
	and is strictly increasing:
	\begin{equation}\label{eq:q-monotonicity}
		q_n>q_{n-1},
		\qquad n\geq1.
	\end{equation}
	
	Consequently,
	\(\lim_{n\to\infty}q_n=1\)
	and hence
	\begin{equation}\label{eq:quadratic-stackelberg-output-limit}
		\lim_{n\to\infty}Q_n^{S,Q}
		=
		M
		=
		\frac{A}{B}.
	\end{equation}
	
	The equilibrium price therefore satisfies
	\begin{equation}\label{eq:quadratic-stackelberg-price-limit}
		\lim_{n\to\infty}P_n^{S,Q}=0.
	\end{equation}
\end{proposition}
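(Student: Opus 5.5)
We take the recursion \eqref{eq:q-recursion} from Theorem~\ref{thm:quadratic-stackelberg-recursion} as given and study the one-dimensional map
\[
g(q)=q+\frac{(1-q)^2}{2(1-q+\delta)},\qquad q\in[0,1).
\]
The plan is to substitute $r=1-q$ and $r_n=1-q_n$, which rewrites the recursion as
\[
r_n=r_{n-1}-\frac{r_{n-1}^2}{2(r_{n-1}+\delta)}=r_{n-1}\,\frac{r_{n-1}+2\delta}{2(r_{n-1}+\delta)},\qquad r_0=1.
\]
Every claim in the proposition becomes a statement about the residual sequence $\{r_n\}$.

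\emph{Bounds and monotonicity.} We argue by induction that $0<r_n\le 1$. The base case is $r_0=1$. If $r_{n-1}>0$, then the factor $\frac{r_{n-1}+2\delta}{2(r_{n-1}+\delta)}$ is strictly positive, since $\delta>0$. It is also strictly less than $1$, because $r+2\delta<2r+2\delta$ whenever $r>0$. Hence $0<r_n<r_{n-1}\le1$. This gives $0\le q_n<1$, which is \eqref{eq:q-bounds}, and strict monotonicity, which is \eqref{eq:q-monotonicity}. Equivalently, the increment $\frac{(1-q_{n-1})^2}{2(1-q_{n-1}+\delta)}$ is strictly positive whenever $q_{n-1}<1$.

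\emph{Limit.} The sequence $\{r_n\}$ is decreasing and bounded below by $0$, so it has a limit $r_\infty\in[0,1)$. The map $r\mapsto r-\frac{r^2}{2(r+\delta)}$ is continuous on $[0,\infty)$. Passing to the limit in the recursion gives $\frac{r_\infty^2}{2(r_\infty+\delta)}=0$, hence $r_\infty=0$ and $q_n\to1$. The positivity $\delta>0$ guarantees that the denominator does not vanish at $r=0$, so continuity applies. Then \eqref{eq:quadratic-stackelberg-aggregate-output} yields $Q_n^{S,Q}=Mq_n\to M=A/B$. Finally, $P_n^{S,Q}=A-BQ_n^{S,Q}\to A-B\cdot A/B=0$, which is \eqref{eq:quadratic-stackelberg-price-limit}.

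The only delicate step is the limit argument. It must exclude a positive fixed point of the residual map, and this is immediate because $r=0$ is its only fixed point in $[0,1]$. Beyond that, the proof is routine. One could also note the rate: since $r_n\approx r_{n-1}(1-r_{n-1}/(2\delta))$ for small $r_{n-1}$, we expect $r_n\sim 2\delta/n$. That estimate is not needed here.
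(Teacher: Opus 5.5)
Your proof is correct and is essentially the paper's argument. The paper likewise uses positivity of the increment $\frac{(1-q_{n-1})^2}{2(1-q_{n-1}+\delta)}$, the same factorization $1-q_n=\frac{(1-q_{n-1})(1-q_{n-1}+2\delta)}{2(1-q_{n-1}+\delta)}>0$ for the upper bound, and passage to the limit in the recursion for the bounded monotone sequence to force $q_\infty=1$; your residual $r_n=1-q_n$ is only a change of notation, and the paper itself uses this recursion for $R_n^{S,Q}$ in the proof of Theorem~\ref{thm:convergence-rate-comparison}.
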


\begin{proof}
	Suppose that \(0\leq q_{n-1}<1\). By
	\eqref{eq:q-recursion},
	\(q_n-q_{n-1}
	=
	\frac{(1-q_{n-1})^2}
	{2(1-q_{n-1}+\delta)}
	>0\).
	Thus, the sequence is strictly increasing as long as
	\(q_{n-1}<1\).
	
	Moreover,
	\[
		1-q_n=
		1-q_{n-1}
		-
		\frac{(1-q_{n-1})^2}
		{2(1-q_{n-1}+\delta)}=
		\frac{
			(1-q_{n-1})(1-q_{n-1}+2\delta)
		}{
			2(1-q_{n-1}+\delta)
		}
		>0.
	\]
	
	Hence \(q_n<1\). Since \(q_0=0\), induction gives
	\eqref{eq:q-bounds} and \eqref{eq:q-monotonicity}.
	
	Therefore, \(\{q_n\}\) is increasing and bounded above, so it
	converges to some \(q_\infty\leq1\). Passing to the limit in
	\eqref{eq:q-recursion} gives
	\(\frac{(1-q_\infty)^2}
	{2(1-q_\infty+\delta)}
	=
	0\),
	and hence \(q_\infty=1\). Equations
	\eqref{eq:quadratic-stackelberg-output-limit} and
	\eqref{eq:quadratic-stackelberg-price-limit} now follow from
	\eqref{eq:quadratic-stackelberg-aggregate-output} and
	\eqref{eq:price-notation}.
\end{proof}

The following proposition compares the simultaneous and sequential
market organizations.

\begin{proposition}\label{prop:quadratic-output-comparison}
	Under the assumptions above,
	\(Q_n^{S,Q}\geq Q_n^{C,Q}\) for \(n\geq1\).
	
	Equality holds for \(n=1\), whereas
	\(Q_n^{S,Q}>Q_n^{C,Q}\) for \(n\geq2\).
		
	Consequently,
	\(P_n^{S,Q}<P_n^{C,Q}\) for \(n\geq2\).
\end{proposition}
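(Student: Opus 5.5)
We will work in the normalized variables, comparing $q_n=Q_n^{S,Q}/M$ with the normalized Cournot output $r_n=Q_n^{C,Q}/M=\frac{n}{n+1+2\delta}$ from Proposition~\ref{prop:quadratic-cournot-output}. Since $M>0$, the inequality $Q_n^{S,Q}\geq Q_n^{C,Q}$ is equivalent to $q_n\geq r_n$. The price statement then follows at once from \eqref{eq:price-notation}, because $P_n^{X,Q}=A-BMq$ is strictly decreasing in the normalized output.

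\emph{Base case.} By \eqref{eq:q-first-value} we have $q_1=\frac{1}{2(1+\delta)}=\frac{1}{2+2\delta}=r_1$. This gives equality for $n=1$.

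\emph{Induction step.} The plan is to show that the Stackelberg recursion map dominates a ``Cournot recursion.'' Write $g(q)=q+\frac{(1-q)^2}{2(1-q+\delta)}$, so that $q_n=g(q_{n-1})$. Two facts are needed.

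(i) $g$ is strictly increasing on $[0,1)$. Set $s=1-q$. Then
\[
1-g(q)=h(s)=\frac{s(s+2\delta)}{2(s+\delta)}
\quad\text{and}\quad
h'(s)=\frac{s^2+2\delta s+2\delta^2}{2(s+\delta)^2}>0,
\]
so $1-g$ increases with $s$, and hence $g$ increases with $q$.

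(ii) $g(r_{n-1})\ge r_n$ for $n\ge2$, with strict inequality. Here $1-r_{n-1}=\frac{1+2\delta}{n+2\delta}$, so the claim is equivalent to
\[
h\!\left(\frac{1+2\delta}{n+2\delta}\right)\le \frac{1+2\delta}{n+1+2\delta}.
\]
Put $a=1+2\delta$ and $m=n+2\delta$, and write $s=a/m$. Then
\[
h(s)=\frac{a(a+2\delta m)}{2m(a+\delta m)}.
\]
After cross-multiplying, the inequality becomes $(a+2\delta m)(m+1)\le 2m(a+\delta m)$. Expanding, this is
\[
am+a+2\delta m^2+2\delta m\le 2am+2\delta m^2,
\]
that is, $a+2\delta m\le am=m+2\delta m$, which reduces to $a\le m$, i.e.\ $1\le n$. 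Equality in (ii) holds exactly when $n=1$. In the induction we use (ii) with $n\ge2$, where the inequality is strict.

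Combining (i) and (ii): if $q_{n-1}\ge r_{n-1}$, then
\[
q_n=g(q_{n-1})\ge g(r_{n-1})>r_n \qquad (n\geq2).
\]
Starting from $q_1=r_1$, this gives $q_n>r_n$ for every $n\ge2$.

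The main obstacle is step (ii), the algebraic reduction. The cleanest path is to write everything in terms of $1-q$ and $1-r$, and to introduce $m=n+2\delta$ and $a=1+2\delta$, which collapses the cross-multiplied inequality to $a\le m$. Monotonicity of $g$ in (i) also needs care, since $g$ is not obviously monotone in the original variable; the substitution $s=1-q$ makes it transparent.
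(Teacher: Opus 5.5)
Your proof is correct and follows essentially the same argument as the paper. The paper also passes to the residual $\rho_n=1-q_n$ with $h(t)=\frac{t(t+2\delta)}{2(t+\delta)}$, uses $h'>0$, and inducts from $\rho_1=r_1$ with the key step $h(r_{n-1})\le r_n$ reducing to $n-1\ge 0$ (strict for $n\ge 2$). Your substitution $a=1+2\delta$, $m=n+2\delta$ spells out the ``direct calculation'' that the paper leaves implicit.
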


The proof of
Proposition~\ref{prop:quadratic-output-comparison} is given in
Appendix~\ref{app:quadratic-outputs}. It is based on the normalized
recursion \eqref{eq:q-recursion} and the Cournot formula
\eqref{eq:quadratic-cournot-aggregate-output}.

Propositions~\ref{prop:quadratic-stackelberg-monotonicity} and
\ref{prop:quadratic-output-comparison} show that sequential
commitment increases aggregate output relative to simultaneous
Cournot competition for every finite \(n\geq2\). Nevertheless, both
market organizations approach the same large-market output
\(M=A/B\). Thus, the principal difference between the two models in
large markets is not their limiting output but the finite-\(n\)
output gap and the rate at which that gap disappears. The convergence
rates are analyzed separately in
Subsection~\ref{subsec:convergence-rates}.

\subsection{Aggregate Outputs under Linear Costs}
\label{subsec:linear-aggregate-outputs}

Suppose that all firms have the linear cost function
\eqref{eq:linear-cost-n}. Recall that
\(\widetilde M
=
\frac{A-c}{B}\)
is the limiting competitive output under linear marginal cost
\(c\).

The equilibrium quantities in the simultaneous Cournot model are
standard.

\begin{proposition}\label{prop:linear-cournot-output}
	Under the inverse demand function
	\eqref{eq:linear-inverse-demand-n} and the linear cost function
	\eqref{eq:linear-cost-n}, the \(n\)-firm Cournot equilibrium is
	symmetric and satisfies
	\begin{equation}\label{eq:linear-cournot-individual-output}
		x_{i,n}^{C,L}
		=
		\frac{A-c}{B(n+1)}
		=
		\frac{\widetilde M}{n+1},
		\qquad i=1,\ldots,n.
	\end{equation}
	
	Consequently, the aggregate Cournot output is
	\begin{equation}\label{eq:linear-cournot-aggregate-output}
		Q_n^{C,L}
		=
		\frac{n(A-c)}{B(n+1)}
		=
		\widetilde M\frac{n}{n+1},
	\end{equation}
	and the corresponding market price is
	\begin{equation}\label{eq:linear-cournot-price}
		P_n^{C,L}
		=
		A-BQ_n^{C,L}
		=
		c+\frac{A-c}{n+1}.
	\end{equation}
\end{proposition}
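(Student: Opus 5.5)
The plan is to argue directly from the first-order conditions of the simultaneous Cournot game under linear costs, and then to use strict concavity to show that the stationary point found is the unique equilibrium.

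\textbf{Step 1: first-order conditions.} With \(C_i^{L}(x_i)=cx_i\), the payoff of Firm \(i\) is
\[
\Pi_i=x_i\Bigl(A-B\sum_{j=1}^n x_j\Bigr)-cx_i .
\]
This is strictly concave in \(x_i\), since \(\partial^2\Pi_i/\partial x_i^2=-2B<0\). Hence, given the other outputs, Firm \(i\)'s best response is the unique solution of
\[
A-c-B\sum_{j\neq i}x_j-2Bx_i=0,
\]
truncated at zero. Writing \(Q=\sum_j x_j\), an interior solution satisfies \(A-c-BQ-Bx_i=0\), that is,
\[
x_i=\frac{A-c-BQ}{B}.
\]

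\textbf{Step 2: symmetry and uniqueness.} The right-hand side of the last display does not depend on \(i\), so every interior equilibrium is symmetric: \(x_i=x\) for all \(i\). Summing the \(n\) first-order conditions gives
\[
n(A-c)-nBQ-BQ=0,\qquad\text{so}\qquad Q=\frac{n(A-c)}{B(n+1)}=\widetilde M\frac{n}{n+1}.
\]
Consequently \(x_i=Q/n=\widetilde M/(n+1)\). Because \(c<A\), this quantity is strictly positive, so the interior solution is admissible.

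\textbf{Step 3: excluding boundary equilibria.} To rule out equilibria in which some firms produce zero, suppose \(x_k=0\) at an equilibrium. The firms with positive output satisfy \(x_i=(A-c-BQ)/B\). Optimality of \(x_k=0\) requires \(A-c-BQ\le0\), which forces every other \(x_i\le0\). Then \(Q=0\), and the condition \(A-c\le0\) contradicts \(c<A\). Since the payoffs are strictly concave in own output, the first-order conditions are also sufficient. Therefore the symmetric solution found in Step 2 is the unique Cournot equilibrium.

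\textbf{Step 4: price.} Substituting the aggregate output into the inverse demand function gives
\[
P_n^{C,L}=A-\frac{n(A-c)}{n+1}=c+\frac{A-c}{n+1}.
\]

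The only step requiring any care is Step 3, the exclusion of boundary equilibria; the rest is routine algebra.
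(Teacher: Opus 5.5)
Your proof is correct, and its core (first-order conditions plus strict concavity in own output, followed by routine algebra) is the same as the paper's. The difference is in how symmetry is obtained.

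The paper's proof simply posits it: ``at a symmetric Cournot equilibrium, \(x_{1,n}^{C,L}=\cdots=x_{n,n}^{C,L}\).'' It then solves the single scalar equation \(A-c-B(n-1)x-2Bx=0\) and invokes \(\partial^2\Pi_i/\partial x_i^2=-2B<0\). That shows each firm's best response is single-valued, but it does not by itself show that every equilibrium is symmetric, nor does it address the nonnegativity constraint.

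Your rewriting of each condition as \(x_i=(A-c-BQ)/B\), whose right-hand side does not depend on \(i\), proves that every interior equilibrium is symmetric. Your Step~3 (optimality of a zero output forces \(A-c-BQ\le 0\)) rules out corner equilibria. Together these establish the symmetry and uniqueness asserted in the statement instead of assuming them.

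The paper's version buys brevity; yours delivers the full content of the claim at the cost of a few extra lines.
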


We next consider the canonical hierarchical Stackelberg model. The
following sequential-output formula is closely related to the
analysis of \cite{Anderson_Engers_1992} and \cite{boyer1986perfect}. 
We state it here to establish the notation
required for the subsequent convergence-rate comparison.

\begin{proposition}\label{prop:linear-stackelberg-output}
	Under the inverse demand function
	\eqref{eq:linear-inverse-demand-n} and the linear cost function
	\eqref{eq:linear-cost-n}, the output of Firm \(k\) in the canonical
	\(n\)-firm hierarchical Stackelberg equilibrium is
	\begin{equation}\label{eq:linear-stackelberg-individual-output}
		x_{k,n}^{S,L}
		=
		\frac{\widetilde M}{2^k},
		\qquad k=1,\ldots,n.
	\end{equation}
	
	Consequently, the aggregate Stackelberg output is
	\begin{equation}\label{eq:linear-stackelberg-aggregate-output}
			Q_n^{S,L}=
			\sum_{k=1}^{n}x_{k,n}^{S,L}=
			\widetilde M
			\sum_{k=1}^{n}2^{-k}=
			\widetilde M\left(1-2^{-n}\right).
\end{equation}

	The corresponding market price is
	\begin{equation}\label{eq:linear-stackelberg-price}
		P_n^{S,L}
		=
		A-BQ_n^{S,L}
		=
		c+(A-c)2^{-n}.
	\end{equation}
\end{proposition}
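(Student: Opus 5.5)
We prove Proposition~\ref{prop:linear-stackelberg-output} by backward induction. The key claim is a statement about the continuation game faced by every firm. Fix $k\in\{1,\ldots,n\}$, and let $S_{k-1}=\sum_{j<k}x_j$ be the output already committed by the preceding firms. We claim by downward induction on $k$ that, in the subgame starting with Firm~$k$,
\begin{itemize}
\item[(a)] Firm $j\ge k$ produces $x_j=\frac{1}{2^{\,j-k+1}}\bigl(\widetilde M-S_{k-1}\bigr)$ whenever $S_{k-1}\le \widetilde M$;
\item[(b)] hence the total continuation output of Firms $k,\ldots,n$ is
\[
R_k(S_{k-1})=\bigl(1-2^{-(n-k+1)}\bigr)\bigl(\widetilde M-S_{k-1}\bigr).
\]
\end{itemize}
Both parts are affine in $S_{k-1}$, and in particular $\partial R_{k+1}/\partial x_k=-(1-2^{-(n-k)})$.

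\emph{Base case $k=n$.} Firm~$n$ maximizes $x_n(A-BS_{n-1}-Bx_n)-cx_n$. This payoff is strictly concave in $x_n$, with second derivative $-2B$. The first-order condition gives
\[
x_n=\frac{A-c-BS_{n-1}}{2B}=\tfrac12\bigl(\widetilde M-S_{n-1}\bigr),
\]
which is exactly the claim for $k=n$.

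\emph{Inductive step.} Assume the claim holds for $k+1$. Firm~$k$ then faces the reduced payoff
\[
\Phi_k(x_k)=x_k\Bigl(A-B\bigl(S_{k-1}+x_k+R_{k+1}(S_{k-1}+x_k)\bigr)\Bigr)-cx_k .
\]
Substituting $R_{k+1}$ from the inductive hypothesis,
\[
S_{k-1}+x_k+R_{k+1}=\widetilde M-2^{-(n-k)}\bigl(\widetilde M-S_{k-1}-x_k\bigr),
\]
so that
\[
\Phi_k(x_k)=B\,2^{-(n-k)}\,x_k\bigl(\widetilde M-S_{k-1}-x_k\bigr).
\]
This is a strictly concave quadratic in $x_k$. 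It is maximized at $x_k=\tfrac12(\widetilde M-S_{k-1})$, and the same formula gives the correct response at every history, not only along the equilibrium path. Once $x_k$ is fixed, the outputs of the later firms follow from (a) at stage $k+1$. The residual $\widetilde M-S_{k-1}$ is halved at each step, which yields (a) and (b) at stage $k$.

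\emph{Equilibrium path.} Taking $k=1$ and $S_0=0$ gives $x_{1,n}^{S,L}=\widetilde M/2$. By induction along the path, the residual before Firm~$k$ moves is $\widetilde M-S_{k-1}=\widetilde M\,2^{-(k-1)}$. Therefore
\[
x_{k,n}^{S,L}=\widetilde M/2^k .
\]
Summing this geometric series gives \eqref{eq:linear-stackelberg-aggregate-output}. Substituting into $P=A-BQ$ and using $B\widetilde M=A-c$ gives \eqref{eq:linear-stackelberg-price}.

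\emph{Main obstacle.} The only delicate point is off-path histories with $S_{k-1}>\widetilde M$, where interior solutions would be negative. There we restrict to $x\ge0$. The truncated responses $\max\{0,\cdot\}$ give zero output for every later firm, and the reduced payoff remains concave. We check that such histories never arise on the path, since the residual stays positive there. This treatment mirrors Example~\ref{ex-new-1}.
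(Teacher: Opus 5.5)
Your proof is correct and follows the paper's argument: backward induction shows that the remaining firms' continuation output is $(1-2^{-m})(\widetilde M-S)$, so each firm's reduced payoff is proportional to $x(\widetilde M-S-x)$ and it produces half the residual, which halves along the path. The differences are cosmetic: you index by position $k$ rather than by the number $m$ of remaining firms, and you prove the closed form directly rather than via $\lambda_m=(1+\lambda_{m-1})/2$; you also add a treatment of off-path histories with $S>\widetilde M$ under nonnegativity, which the paper's proof omits.
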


A backward-induction derivation of
\eqref{eq:linear-stackelberg-individual-output} is provided in
Appendix~\ref{app:linear-outputs}. The formula shows that every firm
produces one half of the quantity produced by its immediate
predecessor:
\(x_{k+1,n}^{S,L}
	=
	\frac12 x_{k,n}^{S,L}\) for \(k=1,\ldots,n-1\).

The next result records the limiting behavior of the two market
organizations.

\begin{proposition}\label{prop:linear-output-limits}
	The aggregate output sequences
	\(\{Q_n^{C,L}\}_{n\geq1}\) and
	\(\{Q_n^{S,L}\}_{n\geq1}\) are strictly increasing and satisfy
	\(\lim_{n\to\infty}Q_n^{C,L}
		=
		\lim_{n\to\infty}Q_n^{S,L}
		=
		\widetilde M
		=
		\frac{A-c}{B}\).
	
	The corresponding equilibrium prices satisfy
	\(\lim_{n\to\infty}P_n^{C,L}
		=
		\lim_{n\to\infty}P_n^{S,L}
		=
		c\).
\end{proposition}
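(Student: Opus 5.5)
The proof is a direct computation from the closed forms in Propositions~\ref{prop:linear-cournot-output} and~\ref{prop:linear-stackelberg-output}. The first step is to handle the Cournot sequence. By \eqref{eq:linear-cournot-aggregate-output},
\[
Q_n^{C,L}=\widetilde M\,\frac{n}{n+1}=\widetilde M\Bigl(1-\frac{1}{n+1}\Bigr).
\]
Since \(\widetilde M=(A-c)/B>0\) by the standing assumption \(0\le c<A\), the difference
\[
Q_{n+1}^{C,L}-Q_n^{C,L}=\widetilde M\Bigl(\frac1{n+1}-\frac1{n+2}\Bigr)
\]
is strictly positive. Hence the sequence is strictly increasing, and \(Q_n^{C,L}\to\widetilde M\) because \(1/(n+1)\to0\).

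The second step treats the Stackelberg sequence in the same way. By \eqref{eq:linear-stackelberg-aggregate-output}, \(Q_n^{S,L}=\widetilde M(1-2^{-n})\). The increment \(Q_{n+1}^{S,L}-Q_n^{S,L}=\widetilde M\,2^{-(n+1)}\) is positive, so this sequence is also strictly increasing. Moreover \(2^{-n}\to0\), which gives the limit \(\widetilde M\).

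The third step deduces the price limits from \eqref{eq:price-notation}, either directly or via the explicit formulas \eqref{eq:linear-cournot-price} and \eqref{eq:linear-stackelberg-price}. Continuity of \(Q\mapsto A-BQ\) gives
\[
\lim_{n\to\infty}P_n^{X,L}=A-B\widetilde M=A-(A-c)=c.
\]
Equivalently, the residual terms \((A-c)/(n+1)\) and \((A-c)2^{-n}\) both tend to zero.

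There is no real obstacle. The only point requiring care is that strict monotonicity relies on \(\widetilde M>0\), that is, on \(c<A\). Without that assumption both sequences would be identically zero, so the assumption should be invoked explicitly.
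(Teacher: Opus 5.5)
Your proof is correct and follows essentially the same route as the paper: you read off monotonicity and the limit \(\widetilde M\) from the closed forms \(\widetilde M\,n/(n+1)\) and \(\widetilde M(1-2^{-n})\), then obtain the price limit \(c\) from the linear inverse demand. Your explicit remark that strict monotonicity requires \(\widetilde M>0\) (i.e., \(c<A\)) is a small but useful addition that the paper leaves implicit.
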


\begin{proof}
	The conclusions follow directly from
	\eqref{eq:linear-cournot-aggregate-output} and
	\eqref{eq:linear-stackelberg-aggregate-output}. Indeed,
	\(\frac{n}{n+1}\)
	and
	\(1-2^{-n}\)
	are strictly increasing sequences converging to \(1\). The price
	limits then follow from
	\eqref{eq:linear-cournot-price} and
	\eqref{eq:linear-stackelberg-price}.
\end{proof}

We can now compare the finite-market outcomes.

\begin{proposition}\label{prop:linear-output-comparison}
	For \(n=1\), the Cournot and Stackelberg models coincide:
	\[Q_1^{S,L}
		=
		Q_1^{C,L}
		=
		\frac{\widetilde M}{2}.\]
	
	For every \(n\geq2\),
	\begin{equation}\label{eq:linear-output-strict-comparison}
		Q_n^{S,L}>Q_n^{C,L}
	\end{equation}
	and, consequently,
	\(P_n^{S,L}<P_n^{C,L}\).
	
	More precisely,
	\(Q_n^{S,L}-Q_n^{C,L}
		=
		\widetilde M
		\left(
		\frac{1}{n+1}-2^{-n}
		\right)\).
\end{proposition}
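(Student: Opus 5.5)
The proof is a direct computation from the closed-form expressions in Propositions~\ref{prop:linear-cournot-output} and \ref{prop:linear-stackelberg-output}. I will first take the difference of \eqref{eq:linear-stackelberg-aggregate-output} and \eqref{eq:linear-cournot-aggregate-output}:
\[
Q_n^{S,L}-Q_n^{C,L}
=\widetilde M\left(1-2^{-n}-\frac{n}{n+1}\right)
=\widetilde M\left(\frac{1}{n+1}-2^{-n}\right).
\]
This gives the exact identity stated in the proposition.

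For \(n=1\), I will simply substitute into the two formulas. This gives \(Q_1^{S,L}=\widetilde M/2=Q_1^{C,L}\), so the difference vanishes because \(\tfrac12-\tfrac12=0\).

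For \(n\geq2\), the strict inequality \eqref{eq:linear-output-strict-comparison} is equivalent to \(2^{n}>n+1\), provided \(\widetilde M>0\). The latter holds because \(0\leq c<A\) and \(B>0\). I will prove \(2^n>n+1\) for \(n\geq2\) by induction:
\begin{itemize}
\item Base case: \(2^2=4>3\).
\item Inductive step: if \(2^n>n+1\), then \(2^{n+1}>2n+2>n+2\).
\end{itemize}
Alternatively, the binomial expansion gives \(2^n=(1+1)^n\geq 1+n+\binom n2>n+1\) for \(n\geq2\).

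Finally, the price comparison follows from \eqref{eq:price-notation}. Since \(P_n^{X,L}=A-BQ_n^{X,L}\) with \(B>0\), we get
\[
P_n^{S,L}-P_n^{C,L}=-B\bigl(Q_n^{S,L}-Q_n^{C,L}\bigr)<0
\]
for \(n\geq2\). Equivalently, one can subtract \eqref{eq:linear-cournot-price} from \eqref{eq:linear-stackelberg-price} directly.

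No step is a real obstacle. The only point needing care is to state explicitly that \(\widetilde M>0\), so that the sign of the difference is determined by \(\frac1{n+1}-2^{-n}\).
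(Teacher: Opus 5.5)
Your proof is correct and matches the paper's argument: subtract the two closed-form aggregate outputs to get the exact gap $\widetilde M\left(\frac{1}{n+1}-2^{-n}\right)$, note that it vanishes at $n=1$, and use $2^n>n+1$ for $n\geq 2$ together with the strict monotonicity of the linear inverse demand. Your explicit remark that $\widetilde M>0$ and your short induction for $2^n>n+1$ fill in two details the paper leaves implicit.
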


\begin{proof}
	Using \eqref{eq:linear-cournot-aggregate-output} and
	\eqref{eq:linear-stackelberg-aggregate-output}, we obtain
	\[
		Q_n^{S,L}-Q_n^{C,L}=
		\widetilde M
		\left(
		1-2^{-n}-\frac{n}{n+1}
		\right)=
		\widetilde M
		\left(
		\frac{1}{n+1}-2^{-n}
		\right).
	\]
	
	For \(n=1\), the right-hand side is zero. For every \(n\geq2\) there holds
	\(2^n>n+1\),
	and therefore
	\(2^{-n}<\frac{1}{n+1}\).
	This proves
	\eqref{eq:linear-output-strict-comparison}. The price comparison
	follows from the strict monotonicity of the inverse demand function.
\end{proof}

The hierarchical structure also generates an ordering of the
individual output levels.

\begin{cor}\label{cor:linear-individual-output-ordering}
	For every \(n\geq2\),
	\(x_{n,n}^{S,L}
		<
		x_{i,n}^{C,L}
		<
		x_{1,n}^{S,L}\) for \(i=1,\ldots,n\).
		Equivalently,
	\[\frac{\widetilde M}{2^n}
		<
		\frac{\widetilde M}{n+1}
		<
		\frac{\widetilde M}{2}.
		\]
\end{cor}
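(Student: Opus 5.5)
The plan is to substitute the explicit formulas from Proposition~\ref{prop:linear-cournot-output} and Proposition~\ref{prop:linear-stackelberg-output}, which turns the claim into an elementary inequality between powers of two and $n+1$. By \eqref{eq:linear-cournot-individual-output}, every Cournot firm produces $x_{i,n}^{C,L}=\widetilde M/(n+1)$, independently of $i$. By \eqref{eq:linear-stackelberg-individual-output}, the leader produces $x_{1,n}^{S,L}=\widetilde M/2$ and the final follower produces $x_{n,n}^{S,L}=\widetilde M/2^n$. This gives the stated equivalence immediately, and it remains only to prove
\[
\frac{1}{2^n}<\frac{1}{n+1}<\frac{1}{2}, \qquad n\geq2.
\]
Throughout we use $\widetilde M>0$, which holds because $c<A$ and $B>0$ in \eqref{eq:linear-cost-n} and \eqref{eq:linear-inverse-demand-n}; multiplying by $\widetilde M$ therefore preserves strict inequalities.

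The right inequality is equivalent to $n+1>2$, that is, $n\geq2$. The left inequality is equivalent to $2^n>n+1$ for $n\geq2$. This was already used in the proof of Proposition~\ref{prop:linear-output-comparison}. For a self-contained argument we would induct on $n$. The base case is $2^2=4>3$. For the inductive step, if $2^n>n+1$, then $2^{n+1}>2n+2>n+2$.

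There is no real obstacle here. The only points needing care are the strictness of the inequalities, which is why $n\geq2$ is required (for $n=1$ all three quantities equal $\widetilde M/2$), and the positivity of $\widetilde M$.
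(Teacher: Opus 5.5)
Your proposal is correct and follows the paper's proof: substitute the explicit formulas $x_{i,n}^{C,L}=\widetilde M/(n+1)$, $x_{1,n}^{S,L}=\widetilde M/2$, $x_{n,n}^{S,L}=\widetilde M/2^n$, then reduce to $2^n>n+1$ and $n+1>2$ for $n\geq2$. Your induction for $2^n>n+1$ and your remark that $\widetilde M>0$ (from $c<A$, $B>0$) supply details the paper leaves implicit.
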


\begin{proof}
	The first inequality follows from
	\[
	2^n>n+1,
	\qquad n\geq2,
	\]
	whereas the second follows from
	\[
	n+1>2.
	\]
	The explicit formulas are given by
	\eqref{eq:linear-cournot-individual-output} and
	\eqref{eq:linear-stackelberg-individual-output}.
\end{proof}

Proposition~\ref{prop:linear-output-comparison} is consistent with the
established comparison between simultaneous and sequential
quantity-setting models: sequential commitment produces a larger
aggregate output and a lower price for every finite \(n\geq2\).
Accordingly, this finite-\(n\) ordering is not presented as a new
contribution. Its role here is to provide the common basis for the
comparison of convergence rates.

The formulas also reveal an important difference between aggregate
and individual outcomes. Aggregate Stackelberg output exceeds
aggregate Cournot output, but the sequential allocation is
asymmetric. The initial leader produces more than a Cournot firm,
whereas the final follower produces less. As \(n\) increases, the
outputs of firms appearing later in the sequence decrease
geometrically.

Although the Cournot and Stackelberg aggregate outputs converge to
the same limit \(\widetilde M\), their residual gaps have different
orders:
\[
\widetilde M-Q_n^{C,L}
=
\frac{\widetilde M}{n+1},
\qquad
\widetilde M-Q_n^{S,L}
=
\widetilde M\,2^{-n}.
\]
These exact identities will be incorporated into the unified
comparison in
Subsection~\ref{subsec:convergence-rates}.

\subsection{Comparison of Convergence Rates}
\label{subsec:convergence-rates}

The preceding results show that the Cournot and hierarchical
Stackelberg aggregate outputs have the same limiting value under a
fixed cost specification. We now compare the rates at which these
limits are approached.

Define the normalized output residuals by
\[
	\begin{aligned}
		R_n^{C,L}
		&=
		\frac{\widetilde M-Q_n^{C,L}}{\widetilde M},
		&
		R_n^{S,L}
		&=
		\frac{\widetilde M-Q_n^{S,L}}{\widetilde M},\\
		R_n^{C,Q}
		&=
		\frac{M-Q_n^{C,Q}}{M},
		&
		R_n^{S,Q}
		&=
		\frac{M-Q_n^{S,Q}}{M}.
	\end{aligned}
\]

The residuals measure the relative distance between the finite-market
aggregate output and the corresponding large-market limit. They also
determine the price gaps because the inverse demand function is
linear.

\begin{theorem}\label{thm:convergence-rate-comparison}
	Under the assumptions of
	Subsections~\ref{subsec:quadratic-aggregate-outputs} and
	\ref{subsec:linear-aggregate-outputs}, the normalized output residuals
	satisfy the following properties.
	
	\begin{enumerate}
		\item Under linear costs,
		\begin{equation}\label{eq:linear-residuals}
			R_n^{C,L}
			=
			\frac{1}{n+1},
			\qquad
			R_n^{S,L}
			=
			2^{-n}.
		\end{equation}
		Consequently,
		\begin{equation}\label{eq:linear-residual-ratio}
			\lim_{n\to\infty}
			\frac{R_n^{S,L}}{R_n^{C,L}}
			=
			\lim_{n\to\infty}
			\frac{n+1}{2^n}
			=
			0.
		\end{equation}
		
		\item Under quadratic costs,
		\begin{equation}\label{eq:quadratic-cournot-residual}
			R_n^{C,Q}
			=
			\frac{1+2\delta}{n+1+2\delta},
		\end{equation}
		and hence
		\begin{equation}\label{eq:quadratic-cournot-residual-asymptotic}
			R_n^{C,Q}
			\sim
			\frac{1+2\delta}{n}.
		\end{equation}
		The Stackelberg residual satisfies
		\(R_0^{S,Q}=1\)
		and
		\begin{equation}\label{eq:quadratic-stackelberg-residual-recursion}
			R_n^{S,Q}
			=
			\frac{
				R_{n-1}^{S,Q}
				\bigl(R_{n-1}^{S,Q}+2\delta\bigr)
			}{
				2\bigl(R_{n-1}^{S,Q}+\delta\bigr)
			},
			\qquad n\geq1.
		\end{equation}
		Moreover,
		\begin{equation}\label{eq:quadratic-stackelberg-residual-asymptotic}
			R_n^{S,Q}
			\sim
			\frac{2\delta}{n}.
		\end{equation}
		Consequently,
		\begin{equation}\label{eq:quadratic-residual-ratio}
			\lim_{n\to\infty}
			\frac{R_n^{S,Q}}{R_n^{C,Q}}
			=
			\frac{2\delta}{1+2\delta}
			<1.
		\end{equation}
	\end{enumerate}
\end{theorem}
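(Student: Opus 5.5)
The plan is to derive each residual from the explicit or recursive output formulas established earlier, and to obtain the only nontrivial asymptotic, \eqref{eq:quadratic-stackelberg-residual-asymptotic}, by a reciprocal-difference argument together with the Stolz--Ces\`aro theorem.

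\emph{Linear costs.} Dividing the exact gaps $\widetilde M-Q_n^{C,L}=\widetilde M/(n+1)$ and $\widetilde M-Q_n^{S,L}=\widetilde M\,2^{-n}$ by $\widetilde M$ gives \eqref{eq:linear-residuals}. These gaps come from \eqref{eq:linear-cournot-aggregate-output} and \eqref{eq:linear-stackelberg-aggregate-output}, as recorded after Corollary~\ref{cor:linear-individual-output-ordering}. The ratio equals $(n+1)/2^n$. It tends to $0$ by any standard argument, for instance the ratio test, since $\frac{n+2}{2(n+1)}\to\frac12$.

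\emph{Quadratic Cournot.} From \eqref{eq:quadratic-cournot-aggregate-output},
\[
1-\frac{Q_n^{C,Q}}{M}=1-\frac{n}{n+1+2\delta}=\frac{1+2\delta}{n+1+2\delta},
\]
which is \eqref{eq:quadratic-cournot-residual}. Since $n/(n+1+2\delta)\to1$, this immediately gives the asymptotic \eqref{eq:quadratic-cournot-residual-asymptotic}.

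\emph{Quadratic Stackelberg recursion.} By \eqref{eq:quadratic-stackelberg-aggregate-output} we have $R_n^{S,Q}=1-q_n$, so $R_0^{S,Q}=1$ by \eqref{eq:q-initial-value}. The recursion \eqref{eq:quadratic-stackelberg-residual-recursion} is exactly the identity
\[
1-q_n=\frac{(1-q_{n-1})(1-q_{n-1}+2\delta)}{2(1-q_{n-1}+\delta)},
\]
already derived in the proof of Proposition~\ref{prop:quadratic-stackelberg-monotonicity}; we rewrite it with $R_{n-1}=1-q_{n-1}$. That proposition also gives $0<R_n\le1$, $R_n$ strictly decreasing, and $R_n\to0$.

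\emph{Quadratic Stackelberg asymptotic.} Writing $R=R_{n-1}^{S,Q}$ and inverting the recursion, we get
\[
\frac{1}{R_n^{S,Q}}-\frac{1}{R}=\frac{2(R+\delta)-(R+2\delta)}{R(R+2\delta)}=\frac{1}{R+2\delta}.
\]
Since $R_{n-1}^{S,Q}\to0$, the increments $1/R_n^{S,Q}-1/R_{n-1}^{S,Q}$ converge to $1/(2\delta)$. The Stolz--Ces\`aro theorem, or simply averaging the telescoping sum $\frac1n\sum_{k=1}^n$ of the increments, then yields $\frac{1}{nR_n^{S,Q}}\to\frac{1}{2\delta}$, that is, $R_n^{S,Q}\sim 2\delta/n$. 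The ratio limit \eqref{eq:quadratic-residual-ratio} follows by dividing the two asymptotics, and it is less than $1$ because $2\delta<1+2\delta$.

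The one step requiring an idea is recognizing that the reciprocal $1/R_n^{S,Q}$ linearizes the recursion. A direct attack on $R_n=R_{n-1}-R_{n-1}^2/\bigl(2(R_{n-1}+\delta)\bigr)$ would only give the order $1/n$, not the sharp constant $2\delta$. Once the reciprocal identity is in hand, the rest is routine.
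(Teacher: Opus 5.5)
Your proposal is correct and follows the paper's proof essentially step by step: the exact linear residuals, the direct Cournot computation, the rewriting $R_n^{S,Q}=1-q_n$, and the reciprocal identity $1/R_n^{S,Q}-1/R_{n-1}^{S,Q}=1/(R_{n-1}^{S,Q}+2\delta)$, which together with $R_n^{S,Q}\to0$ and Stolz--Ces\`aro gives $nR_n^{S,Q}\to2\delta$. The only difference is cosmetic: you justify $(n+1)/2^n\to0$ by the ratio test, whereas the paper simply appeals to exponential growth dominating linear growth.
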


\begin{proof}
	Under linear costs, equations
	\eqref{eq:linear-cournot-aggregate-output} and
	\eqref{eq:linear-stackelberg-aggregate-output} give
	\(\widetilde M-Q_n^{C,L}
	=
	\frac{\widetilde M}{n+1}\)
	and
	\(\widetilde M-Q_n^{S,L}
	=
	\widetilde M\,2^{-n}\).
	Dividing by \(\widetilde M\) yields
	\eqref{eq:linear-residuals}. 
	
	Since exponential growth dominates
	linear growth,
	\(\lim_{n\to\infty}\frac{n+1}{2^n}=0\),
	which proves \eqref{eq:linear-residual-ratio}.
	
	Under quadratic costs,
	\eqref{eq:quadratic-cournot-aggregate-output} gives
	\[
		R_n^{C,Q}=
		1-\frac{Q_n^{C,Q}}{M}=
		1-\frac{n}{n+1+2\delta}=
		\frac{1+2\delta}{n+1+2\delta}.
	\]
	
	This proves \eqref{eq:quadratic-cournot-residual} and
	\eqref{eq:quadratic-cournot-residual-asymptotic}.
	
	By \eqref{eq:quadratic-stackelberg-normalization}, it follows
	\(R_n^{S,Q}=1-q_n\).
	Using the recursion \eqref{eq:q-recursion}, we obtain
	\[
		R_n^{S,Q}
		=
		1-q_{n-1}
		-
		\frac{(1-q_{n-1})^2}
		{2(1-q_{n-1}+\delta)}=
		R_{n-1}^{S,Q}
		-
		\frac{(R_{n-1}^{S,Q})^2}
		{2(R_{n-1}^{S,Q}+\delta)}=
		\frac{
			R_{n-1}^{S,Q}
			\bigl(R_{n-1}^{S,Q}+2\delta\bigr)
		}{
			2\bigl(R_{n-1}^{S,Q}+\delta\bigr)
		}.
	\]
	
	This establishes
	\eqref{eq:quadratic-stackelberg-residual-recursion}.
	
	For brevity, write
	\(r_n=R_n^{S,Q}\).
	From \eqref{eq:quadratic-stackelberg-output-limit}, we know that
	\(r_n\to0\). The residual recursion gives
	\(
		\frac{1}{r_n}-\frac{1}{r_{n-1}}=
		\frac{1}{r_{n-1}+2\delta}\).
		
	Therefore,
	\(\lim_{n\to\infty}
	\left(
	\frac{1}{r_n}-\frac{1}{r_{n-1}}
	\right)
	=
	\frac{1}{2\delta}\).
	According to the Stolz--Ces\'{a}ro theorem, we get
	\(\lim_{n\to\infty}
	\frac{1/r_n}{n}
	=
	\frac{1}{2\delta}\), i.e.,
	\(\lim_{n\to\infty}nr_n=2\delta\),
	which proves
	\eqref{eq:quadratic-stackelberg-residual-asymptotic}.
	
	Finally,
	\(\frac{R_n^{S,Q}}{R_n^{C,Q}}
	\sim
	\frac{2\delta/n}{(1+2\delta)/n}
	=
	\frac{2\delta}{1+2\delta}\),
	and \eqref{eq:quadratic-residual-ratio} follows.
\end{proof}

The corresponding non-normalized output gaps are summarized in the
following corollary.

\begin{cor}\label{cor:absolute-output-gaps}
	Under linear costs,
	\(\widetilde M-Q_n^{C,L}
		=
		\frac{\widetilde M}{n+1}\) and \(\widetilde M-Q_n^{S,L}
		=
		\widetilde M\,2^{-n}\).
	
	Under quadratic costs,
	\(M-Q_n^{C,Q}
		\sim
		\frac{(1+2\delta)M}{n}\),
	whereas
	\(M-Q_n^{S,Q}
		\sim
		\frac{2\delta M}{n}\).
\end{cor}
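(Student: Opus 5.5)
The plan is to obtain the four statements directly from the normalized residuals in Theorem~\ref{thm:convergence-rate-comparison}, by multiplying through by the relevant scale constant. In each case the absolute gap is the normalized residual times that constant: $\widetilde M-Q_n^{\cdot,L}=\widetilde M\,R_n^{\cdot,L}$ and $M-Q_n^{\cdot,Q}=M\,R_n^{\cdot,Q}$. This follows immediately from the definitions of $R_n^{C,L}$, $R_n^{S,L}$, $R_n^{C,Q}$ and $R_n^{S,Q}$. Note that $\widetilde M>0$ because $c<A$, and $M>0$ because $A,B>0$.

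For linear costs, the exact identities come from \eqref{eq:linear-residuals}. Multiplying $R_n^{C,L}=1/(n+1)$ and $R_n^{S,L}=2^{-n}$ by $\widetilde M$ gives $\widetilde M/(n+1)$ and $\widetilde M\,2^{-n}$, respectively. Alternatively, one can read these off \eqref{eq:linear-cournot-aggregate-output} and \eqref{eq:linear-stackelberg-aggregate-output}.

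For quadratic costs, I will use the fact that asymptotic equivalence $a_n\sim b_n$, meaning $a_n/b_n\to1$, is preserved under multiplication by a fixed positive constant. From \eqref{eq:quadratic-cournot-residual-asymptotic}, $R_n^{C,Q}\sim(1+2\delta)/n$. This can be checked directly: $n(1+2\delta)/(n+1+2\delta)\to 1+2\delta$. Multiplying by $M$ then gives $M-Q_n^{C,Q}\sim(1+2\delta)M/n$. Likewise, \eqref{eq:quadratic-stackelberg-residual-asymptotic}, i.e. $nR_n^{S,Q}\to2\delta$, yields $M-Q_n^{S,Q}\sim 2\delta M/n$. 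Here $\delta>0$, so the limit $2\delta$ is nonzero and the relation $\sim$ is meaningful.

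There is no genuine obstacle, since everything reduces to earlier results. The only substantive ingredient is the Stolz--Ces\`aro step already carried out in the proof of Theorem~\ref{thm:convergence-rate-comparison}, which establishes $nr_n\to2\delta$ from $1/r_n-1/r_{n-1}=1/(r_{n-1}+2\delta)$ together with $r_n\to0$. I would simply cite it.
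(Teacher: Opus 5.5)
Your proposal is correct and matches the paper's route: the paper gives no separate proof and presents the corollary as the non-normalized form of Theorem~\ref{thm:convergence-rate-comparison}, obtained by multiplying the residuals by \(\widetilde M\) and \(M\), just as Corollary~\ref{cor:price-convergence-rates} multiplies by \(B\). Your two added checks are exactly the details that make this rigorous: that \(\sim\) is preserved under multiplication by a positive constant, and that \(\delta>0\) makes the limit \(2\delta\) nonzero.
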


Because
\(P_n^{X,Y}=A-BQ_n^{X,Y}\),
the output residuals also yield the price convergence rates.

\begin{cor}\label{cor:price-convergence-rates}
	Under linear costs,
	\(P_n^{C,L}-c
		=
		\frac{A-c}{n+1}\) and \(P_n^{S,L}-c
		=
		(A-c)2^{-n}\).
	
	Under quadratic costs,
	\(P_n^{C,Q}
		\sim
		\frac{(1+2\delta)A}{n}\), whereas
	\(P_n^{S,Q}
		\sim
		\frac{2\delta A}{n}\).
\end{cor}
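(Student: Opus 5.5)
The plan is to obtain both statements from the aggregate-output results already established, using the fact that the inverse demand function \eqref{eq:linear-inverse-demand-n} is linear. For every market organization \(X\in\{C,S\}\) and cost specification \(Y\in\{L,Q\}\), \eqref{eq:price-notation} gives \(P_n^{X,Y}=A-BQ_n^{X,Y}\). The price gap relative to the relevant limit is therefore \(B\) times the corresponding output gap. After that, the argument reduces to substitution into Corollary~\ref{cor:absolute-output-gaps} together with the identities \(B\widetilde M=A-c\) and \(BM=A\).

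\textbf{Linear costs.} We have
\[
P_n^{X,L}-c=A-c-BQ_n^{X,L}=B\bigl(\widetilde M-Q_n^{X,L}\bigr).
\]
Inserting the exact gaps \(\widetilde M/(n+1)\) and \(\widetilde M\,2^{-n}\) from Corollary~\ref{cor:absolute-output-gaps} yields \((A-c)/(n+1)\) and \((A-c)2^{-n}\). These agree with the explicit price formulas \eqref{eq:linear-cournot-price} and \eqref{eq:linear-stackelberg-price}, which serve as a direct check.

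\textbf{Quadratic costs.} We have \(P_n^{X,Q}=A-BQ_n^{X,Q}=B\bigl(M-Q_n^{X,Q}\bigr)=A\,R_n^{X,Q}\). For Cournot, the exact formula \eqref{eq:quadratic-cournot-price} gives
\[
P_n^{C,Q}=A\,\frac{1+2\delta}{n+1+2\delta},
\]
and \(n/(n+1+2\delta)\to1\) gives \(P_n^{C,Q}\sim(1+2\delta)A/n\). For Stackelberg, \(P_n^{S,Q}=A\,R_n^{S,Q}\), and the asymptotic relation \eqref{eq:quadratic-stackelberg-residual-asymptotic} of Theorem~\ref{thm:convergence-rate-comparison} gives \(P_n^{S,Q}\sim 2\delta A/n\).

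The only point needing care is that asymptotic equivalence \(\sim\) is preserved under multiplication by the fixed positive constant \(A\) (or \(B\)). This is immediate, since \(\lim n P_n^{S,Q}=A\lim n R_n^{S,Q}=2\delta A\), so there is no real obstacle.
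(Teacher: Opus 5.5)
Your proposal is correct and follows the paper's own proof: the price gaps are the output gaps of Corollary~\ref{cor:absolute-output-gaps} multiplied by \(B\), using \(B\widetilde M=A-c\) and \(BM=A\). Your additions are harmless refinements of that argument: the cross-check against the explicit price formulas, and the remark that \(\sim\) is preserved under multiplication by the positive constant \(A\) since \(\delta>0\).
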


\begin{proof}
	The conclusions follow by multiplying the corresponding output gaps
	by \(B\), using
	\(B\widetilde M=A-c\) and \(BM=A\).
\end{proof}

Theorem~\ref{thm:convergence-rate-comparison} distinguishes two
different effects of sequential commitment. Under linear costs, the
Stackelberg output gap decreases geometrically, whereas the Cournot
gap decreases algebraically at order \(1/n\). Thus, the Stackelberg
aggregate output approaches its limiting value substantially faster.

Under quadratic costs, both output gaps are of order \(1/n\).
Nevertheless, their leading constants differ. The Stackelberg
constant is \(2\delta\), whereas the Cournot constant is
\(1+2\delta\). Since
\(2\delta<1+2\delta\),
the two models have the same \(1/n\) convergence rate,
but the Stackelberg residual has a strictly smaller leading constant.

These results refine the established observation that both market
organizations approach a competitive large-market outcome. The
existence of the common limit is related to the earlier literature,
including \cite{Robson1990}. The contribution of the present
comparison lies in the exact residual formulas under linear costs and
in the explicit asymptotic constants derived from the quadratic-cost
recursion.

\subsection{Economic Interpretation}
\label{subsec:economic-interpretation}

The preceding results distinguish three aspects of the comparison
between Cournot and hierarchical Stackelberg competition: the
finite-market output level, the limiting market outcome, and the rate
at which that limit is approached. This subsection discusses their
economic implications.

\subsubsection{Aggregate Output and Market Price}

Propositions~\ref{prop:quadratic-output-comparison} and
\ref{prop:linear-output-comparison} show that, under both cost
specifications,
\begin{equation}\label{eq:general-output-comparison}
	Q_n^{S,Y}>Q_n^{C,Y},
	\qquad
	Y\in\{L,Q\},\quad n\geq2.
\end{equation}
Since the inverse demand function is strictly decreasing,
\eqref{eq:general-output-comparison} implies
\(P_n^{S,Y}<P_n^{C,Y}\) for \(Y\in\{L,Q\}\), \(n\geq2\).

Thus, for every finite market with at least two firms, sequential
commitment generates a larger aggregate quantity and a lower market
price than simultaneous Cournot competition. This comparison is
consistent with the established literature on sequential
quantity-setting games, particularly \cite{Anderson_Engers_1992}. 
It is included here as the economic
basis for interpreting the convergence-rate results.

The output ordering should not be interpreted as showing that every
Stackelberg firm produces more than a Cournot firm. Under sequential
play, the distribution of output is asymmetric. In the linear-cost
model, Corollary~\ref{cor:linear-individual-output-ordering} gives
\[
x_{n,n}^{S,L}
<
x_{i,n}^{C,L}
<
x_{1,n}^{S,L}.
\]
The initial leader produces more than a Cournot firm, whereas the
final follower produces less. The larger Stackelberg aggregate output
is therefore generated by the sequential distribution of production,
not by a uniform increase in every firm's output.

\subsubsection{Consumer Surplus}

For the linear inverse demand function
\(P(Q)=A-BQ\),
consumer surplus at aggregate output \(Q\) is
\begin{equation}\label{eq:consumer-surplus-definition}
		CS(Q)
		=
		\int_0^Q P(t)\,dt-P(Q)Q=
		\int_0^Q(A-Bt)\,dt-(A-BQ)Q=
		\frac{B}{2}Q^2.
\end{equation}

The following conclusion follows immediately from the aggregate-output
comparisons.

\begin{proposition}\label{prop:consumer-surplus-comparison}
	Under either linear or quadratic costs,
	\(CS\bigl(Q_n^{S,Y}\bigr)
		>
		CS\bigl(Q_n^{C,Y}\bigr)\) for \(Y\in\{L,Q\}\) and \(n\geq2\).
\end{proposition}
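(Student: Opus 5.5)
The plan is to reduce Proposition~\ref{prop:consumer-surplus-comparison} to the aggregate-output comparisons already established. By \eqref{eq:consumer-surplus-definition}, consumer surplus under the linear inverse demand function is the explicit function
\[
CS(Q)=\frac{B}{2}Q^2,
\]
with \(B>0\). On \([0,\infty)\) this function is strictly increasing. So it suffices to show that \(0\leq Q_n^{C,Y}<Q_n^{S,Y}\) for \(n\geq2\) and \(Y\in\{L,Q\}\). Strict monotonicity then gives \(CS(Q_n^{S,Y})>CS(Q_n^{C,Y})\) immediately.

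The key steps, in order, are as follows.

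First, I record nonnegativity of the Cournot outputs. From \eqref{eq:linear-cournot-aggregate-output}, \(Q_n^{C,L}=\widetilde M\,n/(n+1)\geq0\), because \(c<A\) gives \(\widetilde M>0\). From \eqref{eq:quadratic-cournot-aggregate-output}, \(Q_n^{C,Q}=M\,n/(n+1+2\delta)>0\). Nonnegativity matters here: \(Q\mapsto Q^2\) is only monotone on the nonnegative half-line, and the argument needs both quantities to lie there.

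Second, I invoke the strict output comparisons. For linear costs, Proposition~\ref{prop:linear-output-comparison} gives \(Q_n^{S,L}>Q_n^{C,L}\) for \(n\geq2\). For quadratic costs, Proposition~\ref{prop:quadratic-output-comparison} gives \(Q_n^{S,Q}>Q_n^{C,Q}\) for \(n\geq2\). These are summarized in \eqref{eq:general-output-comparison}.

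Third, I combine the two facts through the factorization
\[
CS(Q_n^{S,Y})-CS(Q_n^{C,Y})=\frac{B}{2}\bigl(Q_n^{S,Y}-Q_n^{C,Y}\bigr)\bigl(Q_n^{S,Y}+Q_n^{C,Y}\bigr).
\]
The first factor is positive by the second step. The second factor is positive because \(Q_n^{S,Y}>Q_n^{C,Y}\geq0\).

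There is no genuine obstacle; the only point needing care is the strict comparison under quadratic costs. That comparison rests on Proposition~\ref{prop:quadratic-output-comparison}, whose proof is deferred to the appendix. I take it as given, as the statement permits.

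For the linear case one could go further and give the explicit gap. By Proposition~\ref{prop:linear-output-comparison}, the surplus difference equals
\[
\frac{B}{2}\widetilde M^2\Bigl(\frac{1}{n+1}-2^{-n}\Bigr)\Bigl(1-2^{-n}+\frac{n}{n+1}\Bigr).
\]
This is optional and not needed for the proof.
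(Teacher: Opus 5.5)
Your proposal is correct and is essentially the paper's argument: \(CS(Q)=\frac{B}{2}Q^2\) is strictly increasing for \(Q\geq 0\), and combining this with the strict output comparisons \(Q_n^{S,Y}>Q_n^{C,Y}\) for \(n\geq2\) gives the claim. Your explicit check that the Cournot outputs are nonnegative, and the factorization of the surplus difference, just spell out a step the paper leaves implicit.
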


\begin{proof}
	By \eqref{eq:consumer-surplus-definition}, consumer surplus is
	strictly increasing in \(Q\) for \(Q>0\). The conclusion follows from
	\eqref{eq:general-output-comparison}.
\end{proof}

Thus, consumers benefit from the larger aggregate quantity and lower
price generated by sequential competition. This conclusion is
independent of how total production is distributed among the firms.

\subsubsection{Total Surplus under Linear Costs}

Total surplus is not the same as consumer surplus. It is defined as
the gross benefit from consumption minus total production costs:
\[
TS(x_1,\ldots,x_n)
	=
	\int_0^Q P(t)\,dt
	-
	\sum_{i=1}^{n}C_i(x_i),
\]
where 	\(Q=\sum_{i=1}^{n}x_i\). 
Equivalently,
\(TS
=
CS+\sum_{i=1}^{n}\Pi_i\).

Under linear costs, total production cost depends only on aggregate
output:
\[\sum_{i=1}^{n}C_i^L(x_i)
=
cQ.
\]

Consequently, total surplus can be written solely as a function of
\(Q\):
\begin{equation}\label{eq:linear-total-surplus}
		TS_L(Q)=
		\int_0^Q(A-Bt)\,dt-cQ=
		(A-c)Q-\frac{B}{2}Q^2.
\end{equation}

The function \(TS_L\) is strictly increasing on
\([0,\widetilde M)\),
because
\begin{equation}\label{eq:linear-total-surplus-derivative}
	TS_L'(Q)
	=
	A-c-BQ
	>
	0
\end{equation}
on this interval.

\begin{proposition}\label{prop:linear-total-surplus-comparison}
	Under linear costs,
	\begin{equation}\label{eq:linear-total-surplus-ordering}
		TS_L\bigl(Q_n^{S,L}\bigr)
		>
		TS_L\bigl(Q_n^{C,L}\bigr),
		\qquad n\geq2.
	\end{equation}
	
	Moreover,
	\[
		\lim_{n\to\infty}
		TS_L\bigl(Q_n^{C,L}\bigr)
		=
		\lim_{n\to\infty}
		TS_L\bigl(Q_n^{S,L}\bigr)
		=
		\frac{(A-c)^2}{2B}.
	\]
\end{proposition}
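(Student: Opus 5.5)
The plan is to reduce both claims to the single-variable function \(TS_L(Q)=(A-c)Q-\frac{B}{2}Q^2\) from \eqref{eq:linear-total-surplus}. Under linear costs, total surplus depends on the output profile only through aggregate output. So it suffices to locate \(Q_n^{C,L}\) and \(Q_n^{S,L}\) inside the interval \([0,\widetilde M)\), on which \(TS_L\) is strictly increasing by \eqref{eq:linear-total-surplus-derivative}, and then to compare them.

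For the ordering \eqref{eq:linear-total-surplus-ordering}, I will use the explicit formulas \eqref{eq:linear-cournot-aggregate-output} and \eqref{eq:linear-stackelberg-aggregate-output}. These give
\[
Q_n^{C,L}=\widetilde M\frac{n}{n+1}, \qquad Q_n^{S,L}=\widetilde M(1-2^{-n}),
\]
and both lie in \((0,\widetilde M)\) because \(\widetilde M>0\) (recall \(c<A\)). Proposition~\ref{prop:linear-output-comparison} gives \(Q_n^{C,L}<Q_n^{S,L}\) for \(n\geq2\). Strict monotonicity of \(TS_L\) on \([0,\widetilde M)\) then yields the strict inequality directly. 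The only point needing care is that both outputs lie strictly below \(\widetilde M\), so that the monotonicity interval applies. This is immediate from \(n/(n+1)<1\) and \(2^{-n}>0\).

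For the limits, I will use that \(TS_L\) is a polynomial and hence continuous, together with Proposition~\ref{prop:linear-output-limits}, which gives \(Q_n^{C,L},Q_n^{S,L}\to\widetilde M\). Therefore both surplus sequences converge to
\[
TS_L(\widetilde M)=(A-c)\frac{A-c}{B}-\frac{B}{2}\frac{(A-c)^2}{B^2}=\frac{(A-c)^2}{2B}.
\]
There is no real obstacle here. The argument is a composition of earlier results, and the only things to verify are the monotonicity domain and this final arithmetic.
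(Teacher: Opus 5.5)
Your proposal is correct and matches the paper's proof: place \(Q_n^{C,L}<Q_n^{S,L}\) inside \((0,\widetilde M)\), where \(TS_L\) is strictly increasing, to get the ordering, then use continuity of \(TS_L\) with Proposition~\ref{prop:linear-output-limits} for the common limit. You also correctly limit the strict comparison to \(n\geq2\); the paper's phrase ``for every finite \(n\)'' is slightly imprecise there, since the two outputs coincide at \(n=1\).
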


\begin{proof}
	For every finite \(n\),
	\(0<Q_n^{C,L}<Q_n^{S,L}<\widetilde M\).
	The strict inequality
	\eqref{eq:linear-total-surplus-ordering} therefore follows from
	\eqref{eq:linear-total-surplus-derivative}. The common limit follows
	from Proposition~\ref{prop:linear-output-limits} and the continuity of
	\(TS_L\).
\end{proof}

Hence, under linear costs, the larger Stackelberg aggregate output
leads not only to higher consumer surplus but also to higher total
surplus. The difference disappears in the large-market limit because
both market organizations approach the same output
\(\widetilde M\).

\subsubsection{Total Surplus under Quadratic Costs}

Under quadratic costs, total production cost is
\(\sum_{i=1}^{n}C_i^Q(x_i)
=
c\sum_{i=1}^{n}x_i^2\).
Total surplus is therefore
\begin{equation}\label{eq:quadratic-total-surplus}
	TS_Q(x_1,\ldots,x_n)
	=
	AQ-\frac{B}{2}Q^2
	-
	c\sum_{i=1}^{n}x_i^2.
\end{equation}

Unlike \eqref{eq:linear-total-surplus}, expression
\eqref{eq:quadratic-total-surplus} is not determined by aggregate
output alone. It also depends on the distribution of production
among the firms. In particular, for a fixed aggregate quantity \(Q\),
the quadratic production cost is lower when output is distributed
more evenly:
\[
\sum_{i=1}^{n}x_i^2
\geq
\frac{Q^2}{n},
\]
with equality if and only if
\(x_1=\cdots=x_n=\frac{Q}{n}\).

Cournot production is symmetric, whereas hierarchical Stackelberg
production is generally asymmetric. Consequently, the comparison
\(Q_n^{S,Q}>Q_n^{C,Q}\)
is sufficient to establish higher consumer surplus in the
Stackelberg model, but it is not by itself sufficient to establish a
general ordering of total surplus. The larger Stackelberg quantity
increases gross consumer benefit, while the asymmetric allocation of
production may increase aggregate quadratic cost.

Accordingly, we do not claim a general total-surplus ordering under
quadratic costs solely from the aggregate-output comparison. Such an
ordering requires either an explicit comparison of
\(c\sum_{i=1}^{n}\bigl(x_{i,n}^{S,Q}\bigr)^2\) and \(c\sum_{i=1}^{n}\bigl(x_{i,n}^{C,Q}\bigr)^2\)
or additional restrictions on the parameters and the distribution of
Stackelberg production.

\subsubsection{Large-Market Interpretation}

Under linear costs, both market organizations satisfy
\[
\lim_{n\to\infty}Q_n^{C,L}=\lim_{n\to\infty}Q_n^{S,L}
=\widetilde M=\frac{A-c}{B}
\]
and
\[
\lim_{n\to\infty}P_n^{C,L}=\lim_{n\to\infty}P_n^{S,L}=c.
\]

Thus, the market price approaches the common marginal cost. The
limiting outcome has the standard competitive interpretation.

Under quadratic costs,
\[
\lim_{n\to\infty}Q_n^{C,Q}=\lim_{n\to\infty}Q_n^{S,Q}=
M=\frac{A}{B}
\]
and
\[
\lim_{n\to\infty}P_n^{C,Q}=\lim_{n\to\infty}P_n^{S,Q}=0.
\]

As the number of firms increases, individual quantities become small,
and their marginal quadratic costs approach zero. The limiting price
therefore approaches the corresponding competitive benchmark.

The common limits are consistent with the earlier literature on
large sequential markets, including \cite{Robson1990}. The
additional information provided by
Theorem~\ref{thm:convergence-rate-comparison} concerns how rapidly
the two market organizations approach these limits. Under linear
costs, the Stackelberg residual decreases geometrically, while the
Cournot residual is of order \(1/n\). Under quadratic costs, both
residuals are of order \(1/n\), but the Stackelberg residual has the
smaller leading constant.

The economic implication is that the effect of sequential commitment
does not disappear merely because the two models have the same
large-market limit. For every finite \(n\), the hierarchical
Stackelberg model produces more output and a lower price, and the
difference remains visible in the rate at which the competitive
benchmark is approached.

\subsection{Numerical Illustration}
\label{subsec:numerical-illustration}

We conclude the large-market comparison with a numerical illustration
of the general results established above. To keep the calculations
transparent, consider the parameter values
\(A=12\), \(B=1\), and \(c=1\).
Thus, the inverse demand function is
\(P(Q)=12-Q\).

The limiting aggregate outputs are
\(\widetilde M
	=
	\frac{A-c}{B}
	=
	11\)
under linear costs and
\(M
	=
	\frac{A}{B}
	=
	12\)
under quadratic costs. Moreover,
\(\delta=\frac{c}{B}=1\).

The aggregate Cournot and Stackelberg outputs under linear costs are
obtained from
\eqref{eq:linear-cournot-aggregate-output} and
\eqref{eq:linear-stackelberg-aggregate-output}:
\(Q_n^{C,L}
	=
	11\frac{n}{n+1}\) and \(Q_n^{S,L}
	=
	11\left(1-2^{-n}\right)\).

Under quadratic costs,
\eqref{eq:quadratic-cournot-aggregate-output} gives
\(Q_n^{C,Q}
	=
	12\frac{n}{n+3}\).
The aggregate Stackelberg output is
\(Q_n^{S,Q}
	=
	12q_n\),
where
\[
	q_0=0,
	\qquad
	q_n
	=
	q_{n-1}
	+
	\frac{(1-q_{n-1})^2}
	{2(2-q_{n-1})}.
\]

The aggregate equilibrium outputs under Cournot and hierarchical
Stackelberg competition, for both linear and quadratic costs and for
selected values of \(n\), are reported in
Table~\ref{tab:numerical-output-comparison}.

\begin{table}[ht]
	\centering
	\caption{Aggregate equilibrium outputs for
		\(A=12\), \(B=1\), and \(c=1\).}
	\label{tab:numerical-output-comparison}
	\begin{tabular}{c c c c c}
		\toprule
		\(n\) &
		\(Q_n^{C,L}\) &
		\(Q_n^{S,L}\) &
		\(Q_n^{C,Q}\) &
		\(Q_n^{S,Q}\)\\
		\midrule
		1  & 5.500 & 5.500  & 3.000 & 3.000\\
		2  & 7.333 & 8.250  & 4.800 & 4.929\\
		3  & 8.250 & 9.625  & 6.000 & 6.240\\
		4  & 8.800 & 10.313 & 6.857 & 7.174\\
		5  & 9.167 & 10.656 & 7.500 & 7.866\\
		6  & 9.429 & 10.828 & 8.000 & 8.396\\
		8  & 9.778 & 10.957 & 8.727 & 9.146\\
		10 & 10.000 & 10.989 & 9.231 & 9.649\\
		\bottomrule
	\end{tabular}
\end{table}

\begin{figure}[!htbp]
	\centering
	\includegraphics[width=0.82\textwidth]{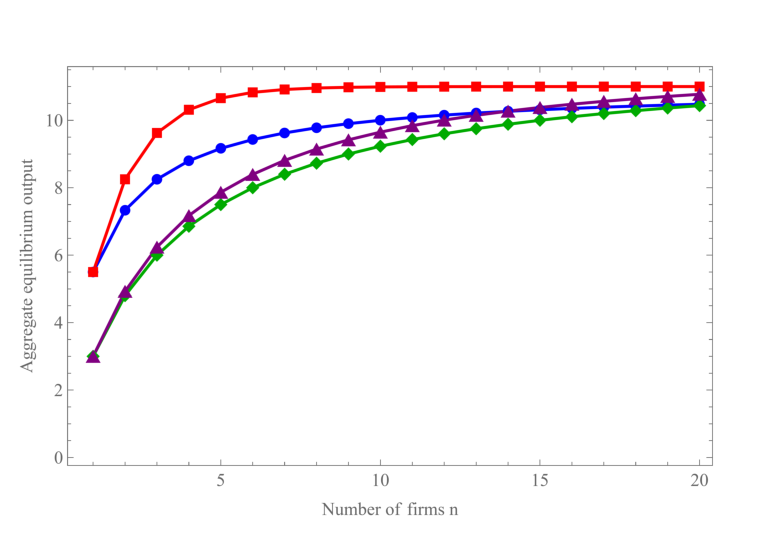}
	\caption{Aggregate equilibrium outputs as functions of the number of firms \(n\). Blue: Cournot with linear costs. Red: hierarchical Stackelberg with linear costs. Green: Cournot with quadratic costs. Purple: hierarchical Stackelberg with quadratic costs.}
	\label{fig:numerical-output-comparison}
\end{figure}

The numerical values (Figure \ref{fig:numerical-output-comparison}) illustrate the general finite-market
comparisons:
\(Q_n^{S,L}>Q_n^{C,L}\)
and
\(Q_n^{S,Q}>Q_n^{C,Q}\)
for every displayed \(n\geq2\). Since \(P(Q)=12-Q\), the corresponding
Stackelberg prices are lower than the Cournot prices.

The different convergence patterns are more clearly visible from the
output gaps. Under linear costs,
\(\widetilde M-Q_n^{C,L}
	=
	\frac{11}{n+1}\) and \(\widetilde M-Q_n^{S,L}
	=
	11\,2^{-n}\).

Under quadratic costs,
\(M-Q_n^{C,Q}
	=
	\frac{36}{n+3}\),
whereas the Stackelberg gap is calculated recursively from
\eqref{eq:q-recursion}  with \(\delta=1\) (Figure \eqref{fig:numerical-output-gaps}).

The output gaps reported in
Table~\ref{tab:numerical-output-gaps} illustrate the different rates
at which the Cournot and hierarchical Stackelberg outputs approach
their respective large-market limits under linear and quadratic
costs.

\begin{table}[ht]
	\centering
	\caption{Distances from the corresponding large-market output.}
	\label{tab:numerical-output-gaps}
	\begin{tabular}{c c c c c}
		\toprule
		\(n\) &
		\(\widetilde M-Q_n^{C,L}\) &
		\(\widetilde M-Q_n^{S,L}\) &
		\(M-Q_n^{C,Q}\) &
		\(M-Q_n^{S,Q}\)\\
		\midrule
		1  & 5.500 & 5.500 & 9.000 & 9.000\\
		2  & 3.667 & 2.750 & 7.200 & 7.071\\
		3  & 2.750 & 1.375 & 6.000 & 5.760\\
		4  & 2.200 & 0.688 & 5.143 & 4.826\\
		5  & 1.833 & 0.344 & 4.500 & 4.134\\
		6  & 1.571 & 0.172 & 4.000 & 3.604\\
		8  & 1.222 & 0.043 & 3.273 & 2.854\\
		10 & 1.000 & 0.011 & 2.769 & 2.351\\
		\bottomrule
	\end{tabular}
\end{table}

\begin{figure}[!htbp]
	\centering
	\includegraphics[width=0.82\textwidth]{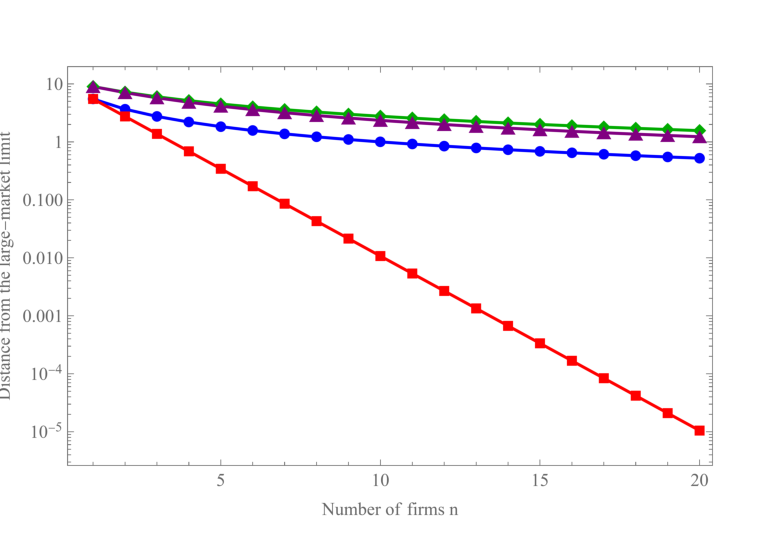}
	\caption{Distances from the corresponding large-market output. Blue: Cournot with linear costs. Red: hierarchical Stackelberg with linear costs. Green: Cournot with quadratic costs. Purple: hierarchical Stackelberg with quadratic costs. The vertical axis is logarithmic.}
	\label{fig:numerical-output-gaps}
\end{figure}

Under linear costs, the difference is pronounced. At \(n=10\), the
Cournot output remains one unit below its limit, whereas the
Stackelberg output is only approximately \(0.011\) below the same
limit. This reflects the geometric Stackelberg rate \(2^{-n}\), in
contrast to the algebraic Cournot rate \(1/(n+1)\).

Under quadratic costs, both gaps decrease at order \(1/n\), so their
difference is less pronounced. Nevertheless, the Stackelberg gap is
smaller for every displayed \(n\geq2\). For the selected value
\(\delta=1\), Theorem~\ref{thm:convergence-rate-comparison} gives
\(R_n^{C,Q}
\sim
\frac{3}{n}\), \(R_n^{S,Q}
\sim
\frac{2}{n}\), 
and therefore
\(\lim_{n\to\infty}
\frac{R_n^{S,Q}}{R_n^{C,Q}}
=
\frac{2}{3}\).

Consumer surplus is obtained from
\eqref{eq:consumer-surplus-definition}. Since \(B=1\),
\(CS(Q)=\frac12Q^2\).
Thus, the larger Stackelberg output generates greater consumer
surplus under both cost specifications for every \(n\geq2\).

For example, when \(n=3\), the linear-cost models give
\(CS\bigl(Q_3^{C,L}\bigr)
=
\frac12(8.25)^2
=
34.031\),
whereas
\(CS\bigl(Q_3^{S,L}\bigr)
=
\frac12(9.625)^2
=
46.320\).
Under quadratic costs,
\(CS\bigl(Q_3^{C,Q}\bigr)
=
\frac12(6)^2
=
18\),
whereas
\(CS\bigl(Q_3^{S,Q}\bigr)
\approx
\frac12(6.240)^2
\approx
19.466\).

These calculations concern consumer surplus, not total surplus.
Under linear costs, Proposition~
\ref{prop:linear-total-surplus-comparison} also guarantees that total
surplus is higher in the Stackelberg model. Under quadratic costs,
however, total surplus additionally depends on the distribution of
production through
\(c\sum_{i=1}^{n}x_i^2\).
Accordingly, the aggregate-output values in
Table~\ref{tab:numerical-output-comparison} are not by themselves
sufficient to compare total surplus under quadratic costs.

The numerical illustration confirms the theoretical conclusions
without introducing additional general claims. Sequential commitment
raises aggregate output and lowers price for every finite
\(n\geq2\). Under linear costs, it also changes the order of
convergence from algebraic to geometric. Under quadratic costs, both
models converge algebraically, but the Stackelberg model has the
smaller leading residual constant.

}

\section*{Author Contributions}

All authors contributed equally and substantially to
writing this article and are listed in alphabetical order. All authors read
and approved the final manuscript.

\section*{Funding}

Funding. This research was partially supported by the
Bulgarian National Science Fund (BNSF), Grant number KP-06-N92/1.

\section*{Data Availability}

Data sharing is not applicable
because no datasets were generated or analyzed during the study.

\section*{Declarations}

\subsection*{Ethics Approval}

The authors declare that they accept the principles of
ethical and professional conduct required by the journal.

\subsection*{Competing Interests}

The authors declare no competing interests.


\appendix
\section{Background on Tupled Fixed Points}

\subsection{Fixed Points for Ordered Triples of Maps}

The Banach fixed point concept \cite{Banach}, despite being over 100 years old, has numerous extensions and implementations. The generalizations can be classified in different ways. We are only interested in the recommended generalization \cite{GL}, which alters the concept of fixed points. Coupled fixed points, introduced in \cite{GL}, are significant in this context.

For a map of two variables $F:A\times A\to A$ (\cite{BL,GL}) a coupled fixed point $(x,y)\in A\times A$ of $F$ in $A$ is defined as $x=F(x,y)$ and $y=F(y,x)$.

The study of the existence and uniqueness of fixed
points for cyclic maps, i.e., \(T:A\to B\) and \(T:B\to A\), rather
than self-maps, was initiated by \cite{KSV}. 

Combining cyclic maps with coupled fixed points \cite{SK} broadened the concept to coupled fixed
points for cyclic maps of two variables by investigating maps $F:A\times A\to B$, $F:B\times B\to A$, and the search of sufficient conditions that will ensure the existence of an ordered pair $(x,y)$ so that $x=F(x,y)$ and $y=F(y,x)$.

However, the equations $x=F(x,y)$ and $y=F(y,x)$ that define the linked fixed points \cite{BL,GL} frequently result in an ordered pair $(x,y)$ such that $x=y$. This limitation was addressed by \cite{Zlatanov-FPT}, who
developed a modified coupled fixed-point concept for an ordered pair
of maps $(F,G)$. \cite{Zlatanov-FPT} suggests combining two maps,
\(F:A\times B\to A\) and \(G:A\times B\to B\), and defining an ordered
pair \((x,y)\) as a coupled fixed point for $(F,G)$ if $x=F(x,y)$ and $y=G(x,y)$. The definition from \cite{BL,GL} is obtained when $G(x,y)=F(y,x)$, and $A=B$.

Building on \cite{GL}, \cite{Borcut-Berinde} proposed seeking an ordered triple $(x,y,z)$ fulfilling $x=F(x,y,z)$, $y=F(y,x,y)$, and $z=F(z,y,x)$, . In \cite{IIKYZ}, generalizations based on the aforementioned have been presented, i.e. $x=F(x,y,z)$, $y=G(x,y,z)$, and $z=H(x,y,z)$ for $F,G,H:A^3\to A$. The concept of a tripled fixed point extends to tripled points for ordered triples of maps $(F,G,H)$ that meet the contractive type condition. We use a simplified version of the main results of \cite{IIKYZ}.

\subsection{Hardy-Rogers Fixed Point Theorem}

\begin{definition}(\cite{Hardy-Rogers,Reich})
	Let $(X, \rho)$ be a metric space. A map $T : X \to X$ is called a Hardy-Rogers map if there exist non-negative constants $a_i$, $i = 1, 2, 3, 4, 5$, such that $\sum_{i=1}^5 a_i < 1$ and for all $x, y \in X$ the following inequality holds:
	\begin{equation}\label{eq-1}
		\rho(Tx, Ty) \leq a_1 \rho(x, y) + a_2 \rho(x, Tx) + a_3 \rho(y, Ty) + a_4 \rho(x, Ty) + a_5 \rho(y, Tx).
	\end{equation}
	
	As pointed out in \cite{Hardy-Rogers}, by the symmetry of the metric $\rho(\cdot, \cdot)$, inequality(\ref{eq-1}) implies:
	$$
	\rho(Tx, Ty) \leq a_1 \rho(x, y) + \frac{a_2 + a_3}{2} \left( \rho(x, Tx) + \rho(y, Ty) \right) + \frac{a_4 + a_5}{2} \left( \rho(x, Ty) + \rho(y, Tx) \right).
	$$
	
	Therefore, without loss of generality, we may consider Hardy-Rogers maps to satisfy the simplified inequality:
	\begin{equation}\label{eq-2}
		\rho(Tx, Ty) \leq k_1 \rho(x, y) + k_2 \left( \rho(x, Tx) + \rho(y, Ty) \right) + k_3 \left( \rho(x, Ty) + \rho(y, Tx) \right),
	\end{equation}
	with $k_1 + 2k_2 + 2k_3 < 1$.
\end{definition}

Special cases include:
\begin{itemize}
	\item $k_2 = k_3 = 0$: Banach contraction~\cite{Banach},
	\item $k_1 = k_3 = 0$: Kannan contraction~\cite{Kannan},
	\item $k_1 = k_2 = 0$: Chatterjea contraction~\cite{Chatterjea}.
\end{itemize}

In the rest of this work, we will assume Hardy-Rogers maps satisfy inequality (\ref{eq-2}).

\begin{theorem}\label{th:1}(\cite{Hardy-Rogers})
	Let \((X,\rho)\) be a complete metric space, and let
	\(T:X\to X\) be a Hardy–Rogers map. Then:
	\begin{enumerate}
		\item there is a unique fixed point $\xi \in X$ of $T$ and, moreover, for any initial guess $x_0 \in X$, the iterated sequence $x_n = T x_{n-1}$ for $n = 1, 2, \ldots$ converges to the fixed point $\xi$;
		\item there holds a priori error estimate:
		$\rho(\xi, x_n) \leq \frac{k^n}{1 - k} \rho(x_0, x_1)$;
		\item there holds a posteriori error estimate: $\rho(\xi, x_n) \leq \frac{k}{1 - k} \rho(x_{n-1}, x_n)$;
		\item the rate of convergence is: $\rho(\xi, x_n) \leq k \rho(\xi, x_{n-1})$,
	\end{enumerate}
	where $k = \frac{k_1 + k_2 + k_3}{1 - k_2 - k_3}$ and $k_i$, $i = 1, 2, 3$ are the constants from (\ref{eq-2}).
\end{theorem}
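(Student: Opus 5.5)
The plan is the classical Picard-iteration argument, adapted to the simplified inequality \eqref{eq-2}. Fix $x_0\in X$, set $x_n=Tx_{n-1}$, and write $d_n=\rho(x_{n-1},x_n)$. First I would show that $(d_n)$ decays geometrically with ratio $k=\frac{k_1+k_2+k_3}{1-k_2-k_3}$. Apply \eqref{eq-2} with $x=x_{n-1}$ and $y=x_n$. This gives
\[
d_{n+1}\le k_1d_n+k_2(d_n+d_{n+1})+k_3\bigl(\rho(x_{n-1},x_{n+1})+\rho(x_n,x_n)\bigr).
\]
The cross term is controlled by the triangle inequality, $\rho(x_{n-1},x_{n+1})\le d_n+d_{n+1}$. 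Collecting the $d_{n+1}$ terms on the left yields $(1-k_2-k_3)d_{n+1}\le(k_1+k_2+k_3)d_n$, that is, $d_{n+1}\le k\,d_n$. The hypothesis $k_1+2k_2+2k_3<1$ is exactly what makes $0\le k<1$ and keeps $1-k_2-k_3>0$. Hence $d_{n+1}\le k^n d_1$.

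Next I would derive the Cauchy property and the two error estimates from telescoping. For $m>n$,
\[
\rho(x_n,x_m)\le\sum_{j=n+1}^{m}d_j\le\sum_{j\ge1}k^{j}\,d_n\le\frac{k}{1-k}\,d_n\le\frac{k^n}{1-k}\,d_1 .
\]
So $(x_n)$ is Cauchy, and by completeness it converges to some $\xi\in X$. Letting $m\to\infty$ in this chain gives both the a posteriori estimate $\rho(\xi,x_n)\le\frac{k}{1-k}\rho(x_{n-1},x_n)$ and the a priori estimate $\rho(\xi,x_n)\le\frac{k^n}{1-k}\rho(x_0,x_1)$.

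To show that $\xi$ is a fixed point, I would avoid assuming continuity of $T$ and instead use \eqref{eq-2} with $x=x_n$ and $y=\xi$. Start from $\rho(\xi,T\xi)\le\rho(\xi,x_{n+1})+\rho(Tx_n,T\xi)$. Bound the second term by \eqref{eq-2} and estimate the occurrences of $\rho(\xi,T\xi)$ on the right (the terms $\rho(\xi,T\xi)$ and $\rho(x_n,T\xi)\le\rho(x_n,\xi)+\rho(\xi,T\xi)$) by triangle inequalities. This gives
\[
(1-k_2-k_3)\,\rho(\xi,T\xi)\le\varepsilon_n\to0,
\]
so $T\xi=\xi$. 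For uniqueness, if $T\eta=\eta$ as well, then \eqref{eq-2} gives $\rho(\xi,\eta)\le(k_1+2k_3)\rho(\xi,\eta)$. Since $k_1+2k_3<1$, this forces $\xi=\eta$.

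The rate estimate uses the same trick as the first step, now applied to $x=x_{n-1}$ and $y=\xi$. This gives
\[
\rho(x_n,\xi)\le k_1\rho(x_{n-1},\xi)+k_2\rho(x_{n-1},x_n)+k_3\bigl(\rho(x_{n-1},\xi)+\rho(\xi,x_n)\bigr).
\]
Bounding $\rho(x_{n-1},x_n)\le\rho(x_{n-1},\xi)+\rho(\xi,x_n)$ and rearranging yields $\rho(\xi,x_n)\le k\,\rho(\xi,x_{n-1})$.

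The only delicate point throughout is handling the $k_2$ and $k_3$ cross terms so that the same constant $k$ appears with the correct denominator $1-k_2-k_3$. I expect to manage this every time with a single triangle inequality routed through the next iterate or through $\xi$.
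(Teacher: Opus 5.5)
Your proposal is correct and complete. The paper does not prove this statement itself; it quotes it from Hardy and Rogers, and your argument is the standard Picard-iteration proof. Each step yields exactly the stated constant $k=\frac{k_1+k_2+k_3}{1-k_2-k_3}$: the ratio bound $d_{n+1}\le k\,d_n$, the telescoping chain that gives both the a priori and a posteriori estimates, the fixed-point property derived from the contractive inequality without assuming continuity, uniqueness from $k_1+2k_3<1$, and the rate bound obtained with a single triangle inequality through $\xi$.
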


\subsection{$N$-tupled Fixed Points}

Following a sequence of articles on coupled fixed
points, beginning with \cite{BL,GL}, and tripled fixed points, beginning with \cite{Borcut-Berinde}, these ideas were generalized to \(N\)-tuples of fixed points \cite{Samet-Vetro}. TThe proposed notions of coupled, tripled, and
\(N\)-tupled fixed points involved only one map, which led to a
symmetric system of defining equations and, consequently, to fixed
points $(\xi_1,\xi_2,\dots,\xi_n)$, $n\geq 2$, satisfying $\xi_i=\xi_j$. This drawback was eliminated by \cite{Zlatanov-FPT} for
coupled fixed points, by \cite{IIKYZ} for tripled fixed
points, and by \cite{Ali-Hristov} for \(N\)-tupled fixed points.

We will recall necessary definitions from \cite{Zlatanov-FPT,IIKYZ,Ali-Hristov}.

\begin{definition}\label{def:1}(\cite{Ali-Hristov})
	Let $I_i$, $i=1,2,\dots, n$ be nonempty sets. We will call the ordered $n$-tuple of maps $(F_1,F_2,\dots,F_n)$ a semi-cyclic map if $F_i:\prod_{j=1}^nI_j\to I_i$ for $i=1,2,\dots, n$ .
\end{definition}

\begin{definition}\label{def:2}(\cite{Ali-Hristov})
	Let $I_i$, $i=1,2,\dots, n$ be nonempty sets and the ordered $n$-tuple $(F_1,F_2,\dots,F_n)$ be a semi-cyclic map. 
	An ordered $n$-tuple $(\xi_1,\xi_2,\dots,\xi_n)\in \prod_{j=1}^nI_j$ is said to be an \(n\)-tupled fixed point of $(F_1,F_2,\dots,F_n)$ if $\xi_i=F_i(\xi_1,\xi_2,\dots,\xi_n)$ for $i=1,2,\dots, n$.
\end{definition}

If $n=2$ we get the definition for coupled fixed points from \cite{Zlatanov-FPT}. If moreover
$F_2(x,y)=F_1(y,x)$ we get the definition for coupled fixed points from \cite{GL,BL}.
If $n=3$ we get the definition for tripled fixed points from \cite{IIKYZ}. If
moreover $F_2(x,y,z)=F_1(y,x,y)$ and $F_3(x,y,z)=F_1(z,x,y)$ we get the definition for tripled fixed points from \cite{Borcut-Berinde}. Let us denote by $\pi_i$ the permutation 
$\pi_i(1,2,\dots,n)=(i,i+1,\dots,n,1,2,\dots,i-1)$. Then if $F_i(x_1,x_2,\dots, x_n)=F_1(\pi_i(x_1,x_2,\dots, x_n))$ we get the definition for $n$-tupled fixed point from \cite{Samet-Vetro}.

\begin{definition}\label{def:3}(\cite{Ali-Hristov})
	Let $I_i$, $i=1,2,\dots, n$ be nonempty sets and the ordered $n$-tuple $(F_1,F_2,\dots,F_n)$ be a semi-cyclic map.  
	For any $n$-tuple $\left(x^{(0)}_1,x^{(0)}_2,\dots,x^{(0)}_n\right)\in \prod_{j=1}^nI_j$ we define
	$x^{(k+1)}_i=F_i\left(x^{(k)}_1,x^{(k)}_2,\dots,x^{(k)}_n\right)$, for $i=1,2,\dots,n$ for all $k\geq 0$.
\end{definition}

A profound observation in \cite{Petrusel,Petrusel18b} connects fixed point results with coupled fixed point ones. Indeed, instead of considering the ordered pair of maps $(F_1,F_2)$, we can define the map $T:I_1\times I_2\to I_1\times I_2$ by $T(x,y)=(F_1(x,y),F_2(x,y))$, and $(\xi,\eta)$ 
will be a coupled fixed point for $(F_1,F_2)$ if and only if it is a fixed point for $T$. Using this notation. The same notation can be used and for $n$-tupled fixed points, by identifying them
as fixed points for the map $T$, defined by 
$$
T(\overline{x})=(F_1(\overline{x}),F_2(\overline{x}),\dots,F_n(\overline{x})),
$$
where $\overline{x}=(x_1,x_2,\dots,x_n)$.

{\section{Proofs for the Large-Market Results}
\label{app:large-market-proofs}

\subsection{Proofs for Aggregate Outputs under Quadratic Costs}
\label{app:quadratic-outputs}

This subsection provides the proofs of
Proposition~\ref{prop:quadratic-cournot-output},
Theorem~\ref{thm:quadratic-stackelberg-recursion}, and
Proposition~\ref{prop:quadratic-output-comparison}.

\begin{proof}[Proof of Proposition~\ref{prop:quadratic-cournot-output}]
	For Firm \(i\), the payoff function is
	\[
	\Pi_i(x_1,\ldots,x_n)
	=
	x_i\left(A-B\sum_{j=1}^n x_j\right)-cx_i^2.
	\]
	
	The first-order condition with respect to \(x_i\) is
	\(A-B\sum_{j\ne i}x_j-2(B+c)x_i=0\).
	At a symmetric Cournot equilibrium,
	\(x_{1,n}^{C,Q}
	=
	\cdots
	=
	x_{n,n}^{C,Q}
	=
	x_n^{C,Q}\).
	
	Consequently,
	\(A-B(n-1)x_n^{C,Q}-2(B+c)x_n^{C,Q}=0\)
	and hence
	\[
	x_n^{C,Q}
	=
	\frac{A}{B(n+1)+2c}
	=
	\frac{M}{n+1+2\delta}.
	\]
	
	Since
	\(\frac{\partial^2\Pi_i}{\partial x_i^2}
	=
	-2(B+c)<0\),
	the first-order condition determines the unique best response of each
	firm. Therefore,
	\[
	x_{i,n}^{C,Q}
	=
	\frac{A}{B(n+1)+2c}
	=
	\frac{M}{n+1+2\delta},
	\qquad i=1,\ldots,n.
	\]
	
	Summing the individual equilibrium outputs gives
	\[
	Q_n^{C,Q}
	=
	\sum_{i=1}^n x_{i,n}^{C,Q}
	=
	\frac{An}{B(n+1)+2c}
	=
	M\frac{n}{n+1+2\delta}.
	\]
	Finally,
	\[
		P_n^{C,Q}=
		A-BQ_n^{C,Q}=
		A-
		A\frac{n}{n+1+2\delta}=
		A\frac{1+2\delta}{n+1+2\delta}.
	\]
	
	This proves
	\eqref{eq:quadratic-cournot-individual-output}--%
	\eqref{eq:quadratic-cournot-price}.
\end{proof}

\begin{proof}[Proof of Theorem~\ref{thm:quadratic-stackelberg-recursion}]
	We use backward induction. Suppose that the firms preceding a subgame
	have already chosen an aggregate quantity \(S\), and let
	\(R=M-S\)
	denote the corresponding residual market capacity.
	
	For \(m\geq0\), let \(L_m(S)\) denote the aggregate output produced
	by a continuation subgame containing \(m\) firms. We prove that
	\begin{equation}\label{eq:app-quadratic-continuation-output}
		L_m(S)=\lambda_m(M-S),
	\end{equation}
	where
	\(\lambda_0=0\)
	and
	\begin{equation}\label{eq:app-quadratic-lambda-recursion}
		\lambda_m
		=
		\lambda_{m-1}
		+
		\frac{(1-\lambda_{m-1})^2}
		{2(1-\lambda_{m-1}+\delta)},
		\qquad m\geq1.
	\end{equation}
	
	For \(m=0\), no firm remains in the continuation subgame, so
	\(L_0(S)=0\). Thus,
	\eqref{eq:app-quadratic-continuation-output} holds with
	\(\lambda_0=0\).
	
	Assume that
	\eqref{eq:app-quadratic-continuation-output} holds for a continuation
	subgame containing \(m-1\) firms. Let the first firm in a continuation
	subgame containing \(m\) firms choose the quantity \(x\). The
	remaining \(m-1\) firms then produce
	\[
	L_{m-1}(S+x)
	=
	\lambda_{m-1}(M-S-x)
	=
	\lambda_{m-1}(R-x).
	\]
	
	Therefore, the reduced payoff of the first firm in this continuation
	subgame is
	\[
	\begin{aligned}
		\widetilde\Pi_m(x;S)
		&=
		x\left[
		A-B\left(
		S+x+\lambda_{m-1}(R-x)
		\right)
		\right]-cx^2\\
		&=
		B(1-\lambda_{m-1})x(R-x)-cx^2.
	\end{aligned}
	\]
	
	Differentiating with respect to \(x\), we obtain
	\[
	\frac{\partial\widetilde\Pi_m}{\partial x}(x;S)
	=
	B(1-\lambda_{m-1})R
	-
	2\left[
	B(1-\lambda_{m-1})+c
	\right]x.
	\]
	
	Moreover,
	\(\frac{\partial^2\widetilde\Pi_m}{\partial x^2}(x;S)
	=
	-2\left[
	B(1-\lambda_{m-1})+c
	\right]<0\).
	Thus, the first-order condition determines the unique maximizing
	quantity:
	\begin{equation}\label{eq:app-quadratic-current-output}
		x
		=
		\mu_m R,
		\qquad
		\mu_m
		=
		\frac{1-\lambda_{m-1}}
		{2(1-\lambda_{m-1}+\delta)}.
	\end{equation}
	
	The aggregate output produced by all \(m\) firms in the continuation
	subgame is therefore
	\[
	\begin{aligned}
		L_m(S)
		&=
		x+L_{m-1}(S+x)=
		\mu_mR+\lambda_{m-1}(R-\mu_mR)\\
		&=
		\left[
		\lambda_{m-1}
		+
		(1-\lambda_{m-1})\mu_m
		\right]R.
	\end{aligned}
	\]
	
	Hence
	\(\lambda_m
	=
	\lambda_{m-1}
	+
	(1-\lambda_{m-1})\mu_m\).
	Substituting \eqref{eq:app-quadratic-current-output}, we obtain
	\(\lambda_m
	=
	\lambda_{m-1}
	+
	\frac{(1-\lambda_{m-1})^2}
	{2(1-\lambda_{m-1}+\delta)}\).
	This proves \eqref{eq:app-quadratic-lambda-recursion}.
	
	For the entire hierarchy, no output has been chosen before Firm~1
	moves. Therefore, \(S=0\), and
	\(Q_n^{S,Q}
	=
	L_n(0)
	=
	M\lambda_n\).
	
	It follows from
	\eqref{eq:quadratic-stackelberg-normalization} that
	\(q_n=\lambda_n\).
	
	Consequently,
	\(q_0=0\)
	and
	\[q_n
	=
	q_{n-1}
	+
	\frac{(1-q_{n-1})^2}
	{2(1-q_{n-1}+\delta)},
	\qquad n\geq1.
	\]
	This proves \eqref{eq:q-initial-value},
	\eqref{eq:q-recursion}, and
	\eqref{eq:quadratic-stackelberg-aggregate-output}.
	
	Setting \(n=1\) in \eqref{eq:q-recursion} gives
	\(q_1
	=
	\frac{1}{2(1+\delta)}\).
	Therefore,
	\[
	Q_1^{S,Q}
	=
	Mq_1
	=
	\frac{A/B}{2(1+c/B)}
	=
	\frac{A}{2(B+c)}.
	\]
	
	This proves \eqref{eq:q-first-value} and
	\eqref{eq:quadratic-stackelberg-one-firm}.
\end{proof}

\begin{proof}[Proof of Proposition~\ref{prop:quadratic-output-comparison}]
	Define the normalized Stackelberg residual by
	\(\rho_n=1-q_n\) for \(n\geq0\).
	Since \(q_0=0\), we have \(\rho_0=1\). From
	\eqref{eq:q-recursion}, it follows that
	\[
		\rho_n
		=
		\rho_{n-1}
		-
		\frac{\rho_{n-1}^2}
		{2(\rho_{n-1}+\delta)}=
		\frac{
			\rho_{n-1}(\rho_{n-1}+2\delta)
		}{
			2(\rho_{n-1}+\delta)
		}.
	\]
	
	Thus,
	\begin{equation}\label{eq:app-quadratic-residual-recursion}
		\rho_n=h(\rho_{n-1}),
		\qquad
		h(t)=\frac{t(t+2\delta)}{2(t+\delta)}.
	\end{equation}
	
	The normalized Cournot residual is
	\[
		r_n=
		1-\frac{Q_n^{C,Q}}{M}=
		1-\frac{n}{n+1+2\delta}=
		\frac{1+2\delta}{n+1+2\delta}.
	\]
	
	We prove that
	\(\rho_n\leq r_n\) for \(n\geq1\).
	
	For \(n=1\), there holds
	\(\rho_1
	=
	1-\frac{1}{2(1+\delta)}
	=
	\frac{1+2\delta}{2(1+\delta)}
	=
	r_1\). 	
	The function \(h\) in
	\eqref{eq:app-quadratic-residual-recursion} is strictly increasing on
	\((0,\infty)\), because
	\(h'(t)
	=
	\frac{t^2+2\delta t+2\delta^2}
	{2(t+\delta)^2}>0\).
	
	Assume that
	\(\rho_{n-1}
	\leq
	r_{n-1}
	=
	\frac{1+2\delta}{n+2\delta}\).
	Using the monotonicity of \(h\), we obtain
	\[
	\rho_n
	=
	h(\rho_{n-1})
	\leq
	h(r_{n-1}).
	\]
	
	A direct calculation gives
	\[
	h(r_{n-1})
	=
	\frac{
		(1+2\delta)
		\bigl(1+2\delta+2\delta(n+2\delta)\bigr)
	}{
		2(n+2\delta)
		\bigl(1+2\delta+\delta(n+2\delta)\bigr)
	}.
	\]
	
	Moreover,
	\(h(r_{n-1})
	\leq
	\frac{1+2\delta}{n+1+2\delta}
	=
	r_n\),
	because the last inequality is equivalent to
	\(n-1\geq0\).
	Therefore, induction yields
	\(\rho_n\leq r_n\) for every \(n\geq1\).
	
	Since
	\(q_n=1-\rho_n\) and \(\frac{Q_n^{C,Q}}{M}=1-r_n\),
	we conclude that
	\(q_n
	\geq
	\frac{Q_n^{C,Q}}{M}\).
	
	Multiplying by \(M>0\), we obtain
	\(Q_n^{S,Q}\geq Q_n^{C,Q}\) for \(n\geq1\).
	
	Equality holds for \(n=1\). For \(n\geq2\), the inequality
	\(h(r_{n-1})<r_n\)
	is strict because it is equivalent to \(n-1>0\). Hence
	\(\rho_n<r_n\)
	and therefore
	\(Q_n^{S,Q}>Q_n^{C,Q}\) for \(n\geq2\).
	
	Finally, since \(P(Q)=A-BQ\) is strictly decreasing and \(B>0\),
	the inequality \(Q_n^{S,Q}>Q_n^{C,Q}\)
	implies
	\(P_n^{S,Q}<P_n^{C,Q}\) for \(n\geq2\).
	
	This completes the proof.
\end{proof}

\subsection{Proofs for Aggregate Outputs under Linear Costs}
\label{app:linear-outputs}

This subsection provides the proofs of
Proposition~\ref{prop:linear-cournot-output} and
Proposition~\ref{prop:linear-stackelberg-output}.

\begin{proof}[Proof of Proposition~\ref{prop:linear-cournot-output}]
	For Firm \(i\), the payoff function is
	\[
	\Pi_i(x_1,\ldots,x_n)
	=
	x_i\left(A-B\sum_{j=1}^{n}x_j\right)-cx_i.
	\]
	
	The first-order condition with respect to \(x_i\) is
	\(A-c-B\sum_{j\ne i}x_j-2Bx_i=0\).
	At a symmetric Cournot equilibrium,
	\(x_{1,n}^{C,L}
	=
	\cdots
	=
	x_{n,n}^{C,L}
	=
	x_n^{C,L}\).
	
	Consequently,
	\(A-c-B(n-1)x_n^{C,L}-2Bx_n^{C,L}=0\),
	and hence
	\[
	x_n^{C,L}
	=
	\frac{A-c}{B(n+1)}
	=
	\frac{\widetilde M}{n+1}.
	\]
	
	Moreover,
	\(\frac{\partial^2\Pi_i}{\partial x_i^2}
	=
	-2B<0\).
	Therefore, the first-order condition determines the unique best
	response of each firm, and
	\[
	x_{i,n}^{C,L}
	=
	\frac{A-c}{B(n+1)}
	=
	\frac{\widetilde M}{n+1},
	\qquad i=1,\ldots,n.
	\]
	
	Summing the individual equilibrium outputs gives
	\[
	Q_n^{C,L}
	=
	\sum_{i=1}^{n}x_{i,n}^{C,L}
	=
	\frac{n(A-c)}{B(n+1)}
	=
	\widetilde M\frac{n}{n+1}.
	\]
	Finally,
	\[
		P_n^{C,L}=
		A-BQ_n^{C,L}=
		A-\frac{n(A-c)}{n+1}=
		\frac{A+nc}{n+1}=
		c+\frac{A-c}{n+1}.
	\]
	$ $
	This proves
	\eqref{eq:linear-cournot-individual-output}--%
	\eqref{eq:linear-cournot-price}.
\end{proof}

\begin{proof}[Proof of Proposition~\ref{prop:linear-stackelberg-output}]
	We use backward induction. Suppose that the firms preceding a
	continuation subgame have already chosen the aggregate quantity \(S\).
	Let
	\(R=\widetilde M-S\)
	denote the residual output relative to
	\(\widetilde M=(A-c)/B\).
	
	For \(m\geq0\), let \(L_m(S)\) denote the aggregate output produced
	by a continuation subgame containing \(m\) firms. We prove that
	\begin{equation}\label{eq:app-linear-continuation-output}
		L_m(S)=\lambda_m(\widetilde M-S),
	\end{equation}
	where
	\begin{equation}\label{eq:app-linear-lambda}
		\lambda_m=1-2^{-m},
		\qquad m\geq0.
	\end{equation}
	
	For \(m=0\), no firm remains in the continuation subgame. Hence
	\(L_0(S)=0\), which agrees with
	\(\lambda_0=1-2^0=0\).
	
	Assume that
	\eqref{eq:app-linear-continuation-output} holds for a continuation
	subgame containing \(m-1\) firms. Let the first firm in a continuation
	subgame containing \(m\) firms choose the quantity \(x\). By the
	induction hypothesis, the remaining \(m-1\) firms produce
	\[
	L_{m-1}(S+x)
	=
	\lambda_{m-1}(\widetilde M-S-x)
	=
	\lambda_{m-1}(R-x).
	\]
	
	The reduced payoff of the first firm in this subgame is
	\[
	\begin{aligned}
		\widetilde\Pi_m(x;S)
		&=
		x\left[
		A-B\left(
		S+x+\lambda_{m-1}(R-x)
		\right)
		\right]-cx=
		Bx\left[
		\widetilde M-S-x-\lambda_{m-1}(R-x)
		\right]\\
		&=
		B(1-\lambda_{m-1})x(R-x).
	\end{aligned}
	\]
	
	Differentiating with respect to \(x\), we obtain
	\[
	\frac{\partial\widetilde\Pi_m}{\partial x}(x;S)
	=
	B(1-\lambda_{m-1})(R-2x).
	\]
	
	Moreover,
	\(\frac{\partial^2\widetilde\Pi_m}{\partial x^2}(x;S)
	=
	-2B(1-\lambda_{m-1})<0\).
	Therefore, the unique maximizing quantity is
	\begin{equation}\label{eq:app-linear-current-output}
		x=\frac{R}{2}
		=
		\frac{\widetilde M-S}{2}.
	\end{equation}
	
	The aggregate output of all \(m\) firms in the continuation subgame
	is consequently
	\[
		L_m(S)=
		x+L_{m-1}(S+x)=
		\frac{R}{2}
		+
		\lambda_{m-1}\left(R-\frac{R}{2}\right)=
		\frac{1+\lambda_{m-1}}{2}R.
	\]
	
	Thus,
	\(\lambda_m
	=
	\frac{1+\lambda_{m-1}}{2}\).
	Since \(\lambda_0=0\), induction gives
	\(\lambda_m=1-2^{-m}\).
	This proves \eqref{eq:app-linear-continuation-output} and
	\eqref{eq:app-linear-lambda}.
	
	We now determine the individual equilibrium outputs. Before Firm~1
	moves, the previously chosen aggregate quantity is \(S_0=0\). By
	\eqref{eq:app-linear-current-output}, there holds
	\(x_{1,n}^{S,L}
	=
	\frac{\widetilde M}{2}\).
	After Firm~1 has chosen its quantity, the residual output is
	\(\widetilde M-x_{1,n}^{S,L}
	=
	\frac{\widetilde M}{2}\).
	Each subsequent firm chooses one half of the residual output left by
	its predecessors. 
	
	Hence, if
	\(S_{k-1}
	=
	\sum_{j=1}^{k-1}x_{j,n}^{S,L}\),
	then
	\(x_{k,n}^{S,L}
	=
	\frac{\widetilde M-S_{k-1}}{2}\).
	
	It follows inductively that
	\(\widetilde M-S_{k-1}
	=
	\frac{\widetilde M}{2^{k-1}}\)
	and therefore
	\(x_{k,n}^{S,L}
	=
	\frac{\widetilde M}{2^k}\) for \(k=1,\ldots,n\).
	
	This proves
	\eqref{eq:linear-stackelberg-individual-output}.
	
	Summing these individual quantities, we obtain
	\[
		Q_n^{S,L}=
		\sum_{k=1}^{n}x_{k,n}^{S,L}=
		\widetilde M\sum_{k=1}^{n}2^{-k}=
		\widetilde M\left(1-2^{-n}\right),
	\]
	which proves
	\eqref{eq:linear-stackelberg-aggregate-output}.
	
	Finally,
	\[
		P_n^{S,L}=
		A-BQ_n^{S,L}=
		A-(A-c)\left(1-2^{-n}\right)=
		c+(A-c)2^{-n}.
	\]

This proves \eqref{eq:linear-stackelberg-price}.
\end{proof}

}

\section{Numerical Illustration with the Original Parameter Values}
\label{app:original-numerical-illustration}

This final appendix supplements the normalized examples in the main text by
returning to the original parameter values
\(A=30996\), \(B=\frac{1}{20}\), and \(c=\frac{1}{40}\).
Thus, inverse demand is \(P(Q)=30996-\frac{1}{20}Q\).  These values
change the numerical scale but not the qualitative comparisons established in
the paper.

\subsection{Cournot and Hierarchical Stackelberg Triopoly}

Suppose first that all three firms have quadratic costs
\(C_i(u)=\frac{1}{40}u^2\). Their payoff functions are
\[
 \Pi_i=x_i\left(30996-\frac{1}{20}(x_1+x_2+x_3)\right)
 -\frac{1}{40}x_i^2,\qquad i=1,2,3.
\]
The Cournot first-order conditions are
\[
\left\{
\begin{array}{rcl}
30996-\frac{3}{20}x-\frac{1}{20}y-\frac{1}{20}z&=&0,\\[2mm]
30996-\frac{1}{20}x-\frac{3}{20}y-\frac{1}{20}z&=&0,\\[2mm]
30996-\frac{1}{20}x-\frac{1}{20}y-\frac{3}{20}z&=&0.
\end{array}
\right.
\]
They give \(x_C=y_C=z_C=123984\), so \(Q_C=371952\) and
\(P_C=12398.4\). Each firm earns \(1152902419.20\), and aggregate
producer profit is \(3458707257.60\).

In the hierarchical Stackelberg model, Firm~3's continuation response is
\[
 b_3(x,y)=206640-\frac{x}{3}-\frac{y}{3}.
\]
After substitution, Firm~2 has the reduced payoff
\(\Phi_2(x,y)=y(20664-\frac{x}{30})-\frac{7}{120}y^2\), and hence
\[
 b_2(x)=177120-\frac{2}{7}x.
\]
The derivative of Firm~1's reduced payoff is
\[
 \frac{d}{dx}\Pi_1\bigl(x,b_2(x),b_3(x,b_2(x))\bigr)
 =14760-\frac{41}{420}x.
\]
Backward induction therefore gives
\[
 x_S=151200,\qquad y_S=133920,\qquad z_S=111600.
\]
Consequently, \(Q_S=396720\), \(P_S=11160\), and the three profits
are \(1115856000\), \(1046183040\), and \(934092000\), respectively.

For linear inverse demand, consumer surplus is
\(CS(Q)=\frac{B}{2}Q^2=\frac{1}{40}Q^2\). Hence
\[
 CS_C=3458707257.60,
 \qquad
 CS_S=3934668960.00.
\]
Adding aggregate producer profit gives
\[
 TS_C=6917414515.20,
 \qquad
 TS_S=7030800000.00.
\]

\begin{figure}[!htbp]
 \centering
 \includegraphics[width=0.82\textwidth]{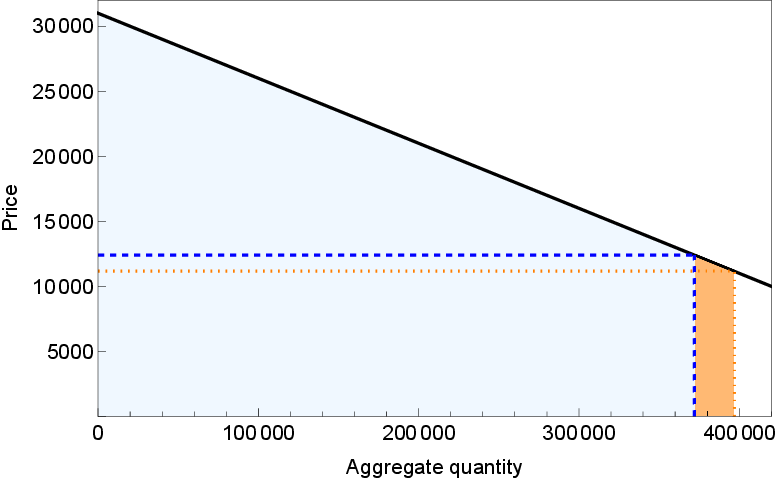}
 \caption{Consumer surplus under the Cournot and hierarchical Stackelberg
 equilibria for the original parameter values. The blue region is the
 Cournot consumer surplus, and the orange region is the additional consumer
 surplus generated by the Stackelberg outcome.}
 \label{fig:original-welfare}
\end{figure}

Figure~\ref{fig:original-welfare} visualizes the larger aggregate output and
lower market price under hierarchical competition. The comparison is
summarized in Table~\ref{tab:original-triopoly-comparison}.

\begin{table}[!htbp]
 \centering
 \caption{Triopoly outcomes for \(A=30996\), \(B=\frac{1}{20}\), and
 \(c=\frac{1}{40}\).}
 \label{tab:original-triopoly-comparison}
 \scriptsize
 \begin{tabular}{lrrrrr}
 \toprule
 Model and outcome & Firm 1 & Firm 2 & Firm 3 & Aggregate & Total surplus\\
 \midrule
 Stackelberg output & 151200 & 133920 & 111600 & 396720 & \\
 Stackelberg profit & 1115856000 & 1046183040 & 934092000 & 3096131040 & 7030800000.00\\
 Stackelberg price  & 11160 & 11160 & 11160 & & \\
 \midrule
 Cournot output & 123984 & 123984 & 123984 & 371952 & \\
 Cournot profit & 1152902419.20 & 1152902419.20 & 1152902419.20 & 3458707257.60 & 6917414515.20\\
 Cournot price & 12398.4 & 12398.4 & 12398.4 & & \\
 \bottomrule
 \end{tabular}
\end{table}

Relative to Cournot competition, the hierarchical Stackelberg structure
raises aggregate output by approximately \(6.66\%\), lowers price by
approximately \(9.99\%\), raises consumer surplus by approximately
\(13.76\%\), and raises total surplus by approximately \(1.64\%\).
Aggregate producer profit falls, while production becomes asymmetric: the
leader produces approximately \(35.48\%\) more than the final follower.

\subsection{Large-Market Numerical Comparison}

We next use the same parameter values in the large-market formulas from
Section~\ref{sec:large-market-comparison}. For linear costs,
\(\widetilde M=(A-c)/B=619919.5\), whereas for quadratic costs,
\(M=A/B=619920\) and \(\delta=c/B=\frac12\). The four aggregate-output
sequences are
\[
 Q_n^{C,L}=\widetilde M\frac{n}{n+1},\qquad
 Q_n^{S,L}=\widetilde M(1-2^{-n}),\qquad
 Q_n^{C,Q}=M\frac{n}{n+2},\qquad
 Q_n^{S,Q}=Mq_n,
\]
where \(q_0=0\) and
\[
 q_n=q_{n-1}+\frac{(1-q_{n-1})^2}{2(1-q_{n-1}+1/2)}.
\]

\begin{table}[!htbp]
 \centering
 \caption{Aggregate equilibrium outputs for the original parameter values.}
 \label{tab:original-output-comparison}
 \begin{tabular}{crrrr}
 \toprule
 \(n\) & \(Q_n^{C,L}\) & \(Q_n^{S,L}\) & \(Q_n^{C,Q}\) & \(Q_n^{S,Q}\)\\
 \midrule
 1  & 309959.750 & 309959.750 & 206640.000 & 206640.000\\
 2  & 413279.667 & 464939.625 & 309960.000 & 324720.000\\
 3  & 464939.625 & 542429.562 & 371952.000 & 396720.000\\
 4  & 495935.600 & 581174.531 & 413280.000 & 443439.784\\
 5  & 516599.583 & 600547.016 & 442800.000 & 475453.242\\
 6  & 531359.571 & 610233.258 & 464940.000 & 498416.947\\
 8  & 551039.556 & 617497.939 & 495936.000 & 528675.946\\
 10 & 563563.182 & 619314.110 & 516600.000 & 547418.025\\
 \bottomrule
 \end{tabular}
\end{table}

\begin{figure}[!htbp]
 \centering
 \includegraphics[width=0.9\textwidth]{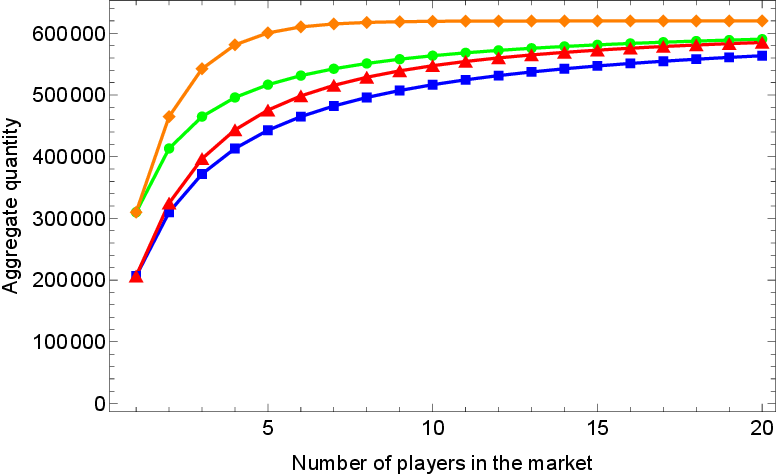}
 \caption{Aggregate quantities for the original parameter values. Green:
 Cournot with linear costs. Blue: Cournot with quadratic costs. Orange:
 Stackelberg with linear costs. Red: Stackelberg with quadratic costs.}
 \label{fig:original-quantities-all}
\end{figure}

Table~\ref{tab:original-output-comparison} and
Figure~\ref{fig:original-quantities-all} confirm that the Stackelberg and
Cournot outputs coincide for one firm, while the Stackelberg aggregate output
is larger for every displayed \(n\geq2\). The corresponding Stackelberg price
is therefore lower.

The asymmetric distribution of Stackelberg output is shown separately for
the two cost specifications in Figures~\ref{fig:original-first-last-linear}
and \ref{fig:original-first-last-quadratic}.

\begin{figure}[!htbp]
 \centering
 \includegraphics[width=0.9\textwidth]{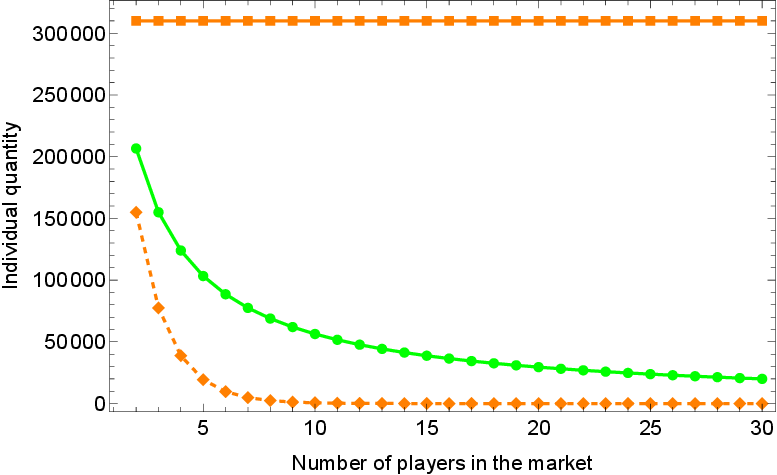}
 \caption{Individual quantities under linear costs. Green: the common
 Cournot quantity. Solid orange: the first Stackelberg firm. Dashed orange:
 the final Stackelberg firm.}
 \label{fig:original-first-last-linear}
\end{figure}

\begin{figure}[!htbp]
 \centering
 \includegraphics[width=0.9\textwidth]{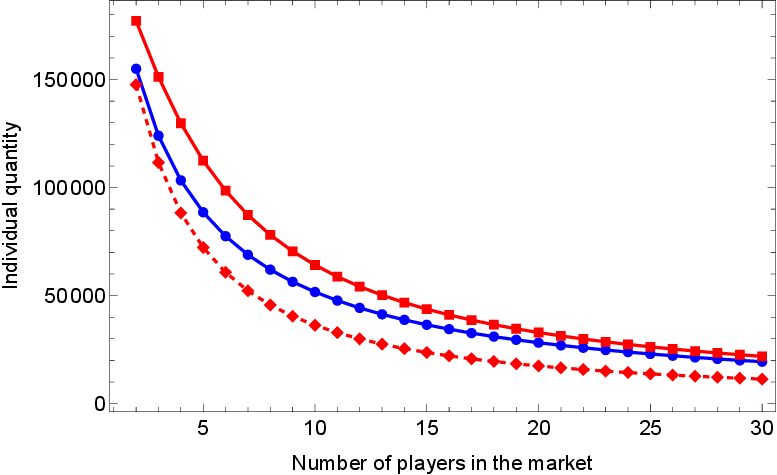}
 \caption{Individual quantities under quadratic costs. Blue: the common
 Cournot quantity. Solid red: the first Stackelberg firm. Dashed red: the
 final Stackelberg firm.}
 \label{fig:original-first-last-quadratic}
\end{figure}

In both panels, the first Stackelberg firm produces more than a Cournot firm,
whereas the final Stackelberg firm produces less. Thus, sequential commitment
affects not only aggregate output but also its distribution across firms.

\begin{table}[!htbp]
 \centering
 \caption{Distances from the corresponding large-market output for the
 original parameter values.}
 \label{tab:original-output-gaps}
 \begin{tabular}{crrrr}
 \toprule
 \(n\) & \(\widetilde M-Q_n^{C,L}\) & \(\widetilde M-Q_n^{S,L}\) & \(M-Q_n^{C,Q}\) & \(M-Q_n^{S,Q}\)\\
 \midrule
 1  & 309959.750 & 309959.750 & 413280.000 & 413280.000\\
 2  & 206639.833 & 154979.875 & 309960.000 & 295200.000\\
 3  & 154979.875 & 77489.938  & 247968.000 & 223200.000\\
 4  & 123983.900 & 38744.969  & 206640.000 & 176480.216\\
 5  & 103319.917 & 19372.484  & 177120.000 & 144466.758\\
 6  & 88559.929  & 9686.242   & 154980.000 & 121503.053\\
 8  & 68879.944  & 2421.561   & 123984.000 & 91244.054\\
 10 & 56356.318  & 605.390    & 103320.000 & 72501.975\\
 \bottomrule
 \end{tabular}
\end{table}

\begin{figure}[!htbp]
 \centering
 \includegraphics[width=0.9\textwidth]{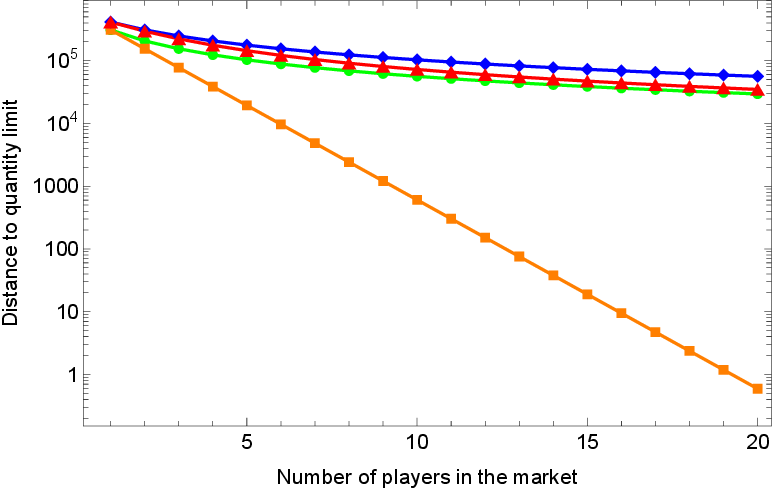}
 \caption{Distances of aggregate quantities from their large-market limits.
 Green and orange correspond to Cournot and Stackelberg with linear costs;
 blue and red correspond to Cournot and Stackelberg with quadratic costs.}
 \label{fig:original-quantity-gaps}
\end{figure}

Table~\ref{tab:original-output-gaps} and
Figure~\ref{fig:original-quantity-gaps} make the convergence-rate comparison
visible. With linear costs, the Stackelberg gap decreases geometrically as
\(\widetilde M2^{-n}\), whereas the Cournot gap is of order \(1/n\). With
quadratic costs, both gaps are of order \(1/n\), but the Stackelberg gap has
the smaller leading constant.

The equilibrium prices are
\[
 P_n^{C,L}=\frac{A+nc}{n+1},\qquad
 P_n^{S,L}=c+(A-c)2^{-n},\qquad
 P_n^{C,Q}=\frac{2A}{n+2},\qquad
 P_n^{S,Q}=A-BQ_n^{S,Q}.
\]

\begin{figure}[!htbp]
 \centering
 \includegraphics[width=0.9\textwidth]{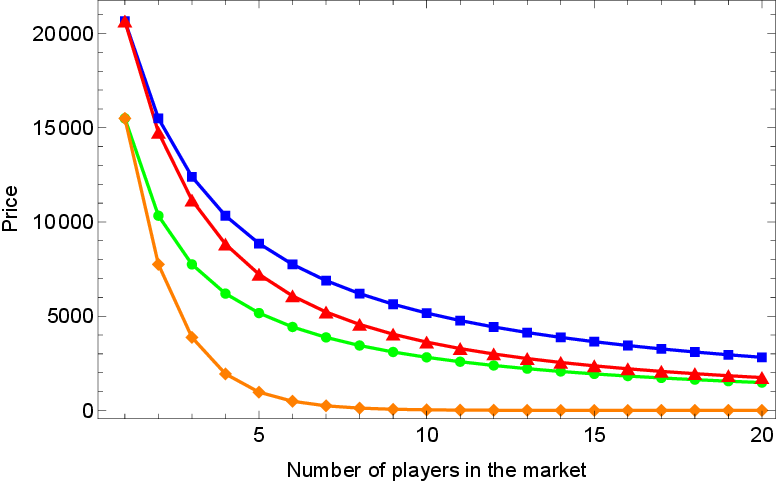}
 \caption{Equilibrium prices for the original parameter values. Green:
 Cournot with linear costs. Blue: Cournot with quadratic costs. Orange:
 Stackelberg with linear costs. Red: Stackelberg with quadratic costs.}
 \label{fig:original-prices-all}
\end{figure}

\begin{figure}[!htbp]
 \centering
 \includegraphics[width=0.9\textwidth]{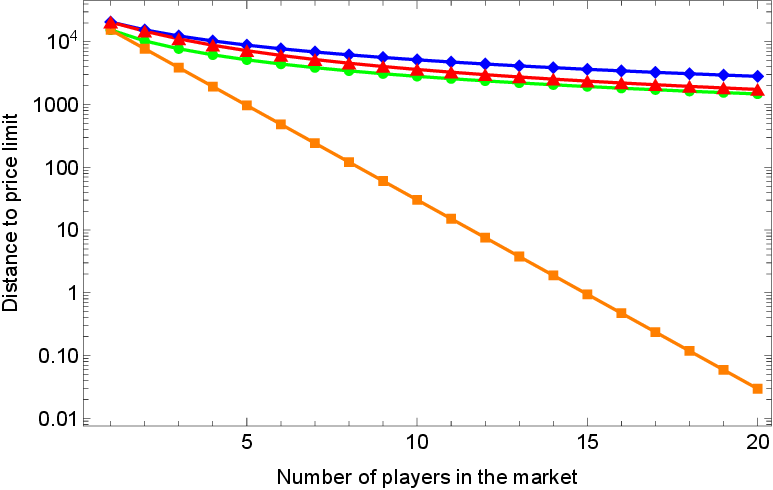}
 \caption{Distances of equilibrium prices from their large-market limits.
 Green and orange correspond to Cournot and Stackelberg with linear costs;
 blue and red correspond to Cournot and Stackelberg with quadratic costs.}
 \label{fig:original-price-gaps}
\end{figure}

Figures~\ref{fig:original-prices-all} and
\ref{fig:original-price-gaps} provide the price counterpart of the output
comparison. Under linear costs, both prices converge to \(c\), with geometric
convergence in the Stackelberg model. Under quadratic costs, both prices
converge to zero at order \(1/n\), and the Stackelberg price is lower for
every finite \(n\geq2\).

\bibliographystyle{plainnat}
\bibliography{martin-06}

\begin{thebibliography}{55}
\providecommand{\natexlab}[1]{#1}
\providecommand{\url}[1]{\texttt{#1}}
\expandafter\ifx\csname urlstyle\endcsname\relax
  \providecommand{\doi}[1]{doi: #1}\else
  \providecommand{\doi}{doi: \begingroup \urlstyle{rm}\Url}\fi

\bibitem[Adhikari et~al.(2025)Adhikari, Basu, and Avittathur]{Adhikari2025}
A.~Adhikari, S.~Basu, and B.~Avittathur.
\newblock Greening innovation, advertising, and pricing decisions under
  competition and market coverage.
\newblock \emph{Journal of Cleaner Production}, 494:\penalty0 144951, 2025.
\newblock \doi{10.1016/j.jclepro.2025.144951}.

\bibitem[Ali et~al.(2025)Ali, Hristov, Ilchev, Kulina, and
  Zlatanov]{Ali-Hristov}
A.~Ali, M.~Hristov, A.~Ilchev, H.~Kulina, and B.~Zlatanov.
\newblock Applications of n-tupled fixed points in partially ordered metric
  spaces for solving systems of nonlinear matrix equations.
\newblock \emph{Mathematics}, 13\penalty0 (13):\penalty0 2125, 2025.
\newblock \doi{10.3390/math13132125}.

\bibitem[Anderson and Engers(1992)]{Anderson_Engers_1992}
Simon~P. Anderson and Maxim Engers.
\newblock Stackelberg versus cournot oligopoly equilibrium.
\newblock \emph{International Journal of Industrial Organization}, 10\penalty0
  (1):\penalty0 127--135, 1992.
\newblock ISSN 0167-7187.
\newblock \doi{https://doi.org/10.1016/0167-7187(92)90052-Z}.
\newblock URL
  \url{https://www.sciencedirect.com/science/article/pii/016771879290052Z}.

\bibitem[Askar(2018)]{Askar2018}
S.S. Askar.
\newblock Tripoly stackelberg game model: One leader versus two followers.
\newblock \emph{Applied Mathematics and Computation}, 328:\penalty0 301--311,
  2018.
\newblock ISSN 0096-3003.
\newblock \doi{https://doi.org/10.1016/j.amc.2018.01.041}.
\newblock URL
  \url{https://www.sciencedirect.com/science/article/pii/S0096300318300626}.

\bibitem[Badev et~al.(2024)Badev, Kabaivanov, Kopanov, Zhelinski, and
  Zlatanov]{Badev_atal_longrun_mobilemarket_math12050724}
Anton Badev, Stanimir Kabaivanov, Petar Kopanov, Vasil Zhelinski, and Boyan
  Zlatanov.
\newblock Long-run equilibrium in the market of mobile services in the usa.
\newblock \emph{Mathematics}, 12\penalty0 (5), 2024.
\newblock ISSN 2227-7390.
\newblock \doi{10.3390/math12050724}.
\newblock URL \url{https://www.mdpi.com/2227-7390/12/5/724}.

\bibitem[Banach(1922)]{Banach}
S.~Banach.
\newblock Sur les opérations dans les ensembles abstraits et leurs
  applications aux intégrales.
\newblock \emph{Fundamenta Mathematicae}, 3\penalty0 (1):\penalty0 133--181,
  1922.
\newblock \doi{10.4064/fm-3-1-133-181}.

\bibitem[Basiri et~al.(2025)Basiri, Abedian, Aghasi, and
  Dashtaali]{Basiri2025258}
R.~Basiri, M.~Abedian, S.~Aghasi, and Z.~Dashtaali.
\newblock A dynamic analysis of the firms in oligopoly market structure: a case
  study.
\newblock \emph{Journal of Modelling in Management}, 20\penalty0 (1):\penalty0
  258--275, 2025.
\newblock \doi{10.1108/JM2-01-2024-0023}.

\bibitem[Berinde and Borcut(2011)]{Borcut-Berinde}
V.~Berinde and M.~Borcut.
\newblock Tripled fixed point theorems for contractive type mappings in
  partially ordered metric spaces.
\newblock \emph{Nonlinear Analysis: Theory, Methods \& Applications},
  74\penalty0 (15):\penalty0 4889--4897, 2011.
\newblock \doi{10.1016/j.na.2011.03.032}.

\bibitem[Bhaskar and Lakshmikantham(2006)]{BL}
T.~G. Bhaskar and V.~Lakshmikantham.
\newblock Fixed point theorems in partially ordered metric spaces and
  applications.
\newblock \emph{Nonlinear Analysis: Theory, Methods \& Applications},
  65\penalty0 (7):\penalty0 1379--1393, 2006.
\newblock \doi{10.1016/j.na.2005.10.017}.

\bibitem[Bischi et~al.(2010)Bischi, Chiarella, Kopel, and Szidarovszky]{BCKS}
G.~Bischi, C.~Chiarella, M.~Kopel, and F.~Szidarovszky.
\newblock \emph{Nonlinear Oligopolies: Stability and Bifurcations}.
\newblock Springer, Berlin--Heidelberg, 2010.
\newblock ISBN 978-3-642-02105-3.
\newblock \doi{10.1007/978-3-642-02106-0}.

\bibitem[Boyer and Moreaux(1986)]{boyer1986perfect}
Marcel Boyer and Michel Moreaux.
\newblock Perfect competition as the limit of a hierarchical market game.
\newblock \emph{Economics Letters}, 22\penalty0 (2-3):\penalty0 115--118, 1986.

\bibitem[Boyer and Moreaux(1987)]{BoyerMoreaux1987}
Marcel Boyer and Michel Moreaux.
\newblock On stackelberg equilibria with differentiated products: The critical
  role of the strategy space.
\newblock \emph{Journal of Industrial Economics}, 36\penalty0 (2):\penalty0
  217--230, 1987.
\newblock \doi{10.2307/2098414}.

\bibitem[Byrne et~al.(2024)Byrne, de~Roos, Lewis, Marx, and
  Wu]{ByrneDeRoos2024}
D.~P. Byrne, N.~de~Roos, M.~S. Lewis, L.~M. Marx, and X.~Wu.
\newblock Asymmetric information sharing in oligopoly: Evidence and
  implications.
\newblock Technical report, SSRN, 2024.

\bibitem[Cellini and Lambertini(2004)]{Cellini-Lambertini}
R.~Cellini and L.~Lambertini.
\newblock Dynamic oligopoly with sticky prices: Closed-loop, feedback and
  open-loop solutions.
\newblock \emph{Journal of Dynamical and Control Systems}, 10\penalty0
  (3):\penalty0 303--314, 2004.
\newblock \doi{10.1023/B:JODS.0000034432.46970.64}.

\bibitem[Chatterjea(1972)]{Chatterjea}
S.~K. Chatterjea.
\newblock Fixed-point theorems.
\newblock \emph{Comptes Rendus de l'Acad\'{e}mie Bulgare des Sciences},
  25:\penalty0 727--730, 1972.

\bibitem[Cournot(1960)]{Cournot2}
A.-A. Cournot.
\newblock \emph{Researches into the Mathematical Principles of the Theory of
  Wealth}.
\newblock Dover Publications, New York, 1960.
\newblock ISBN 978-0-486-66095-7.

\bibitem[Cumbul(2021)]{Cumbul2021}
Eray Cumbul.
\newblock Stackelberg versus cournot oligopoly with private information.
\newblock \emph{International Journal of Industrial Organization}, 74:\penalty0
  102674, 2021.
\newblock \doi{10.1016/j.ijindorg.2020.102674}.

\bibitem[Dianetti(2025)]{Dianetti}
J.~Dianetti.
\newblock Linear-quadratic-singular stochastic differential games and
  applications.
\newblock \emph{Decisions in Economics and Finance}, 48\penalty0 (1):\penalty0
  381–413, 2025.
\newblock \doi{10.1007/s10203-023-00422-0}.

\bibitem[Dzhabarova et~al.(2020)Dzhabarova, Kabaivanov, Ruseva, and
  Zlatanov]{DKRZ}
Y.~Dzhabarova, S.~Kabaivanov, M.~Ruseva, and B.~Zlatanov.
\newblock Existence, uniqueness and stability of market equilibrium in
  oligopoly markets.
\newblock \emph{Administrative Sciences}, 10\penalty0 (3):\penalty0 70, 2020.
\newblock \doi{10.3390/admsci10030070}.

\bibitem[Economides(1993)]{Economides1993}
Nicholas Economides.
\newblock Quantity leadership and social inefficiency.
\newblock \emph{International Journal of Industrial Organization}, 11\penalty0
  (2):\penalty0 219--237, 1993.
\newblock \doi{10.1016/0167-7187(93)90005-W}.

\bibitem[Escrihuela-Villar(2009)]{EscrihuelaVillar2009}
Marc Escrihuela-Villar.
\newblock A note on cartel stability and endogenous sequencing with tacit
  collusion.
\newblock \emph{Journal of Economics}, 96\penalty0 (2):\penalty0 137--147,
  2009.
\newblock \doi{10.1007/s00712-008-0053-8}.

\bibitem[Friedman(2009)]{Friedman}
J.~Friedman.
\newblock \emph{Oligopoly Theory}.
\newblock Cambridge University Press, Cambridge, 2009.
\newblock ISBN 978-0-521-11316-6.
\newblock \doi{10.1017/CBO9780511571893}.

\bibitem[Geraskin(2024)]{Geraskin2024627}
M.I. Geraskin.
\newblock Quantity conjectural variations in oligopoly games under different
  demand and cost functions and multilevel leadership.
\newblock \emph{Automation and Remote Control}, 85\penalty0 (7):\penalty0
  627--640, 2024.
\newblock \doi{10.1134/S0005117924700061}.

\bibitem[Guo and Lakshmikantham(1987)]{GL}
D.~Guo and V.~Lakshmikantham.
\newblock Coupled fixed points of nonlinear operators with applications.
\newblock \emph{Nonlinear Analysis: Theory, Methods \& Applications},
  11\penalty0 (5):\penalty0 623--632, 1987.
\newblock \doi{10.1016/0362-546X(87)90077-0}.

\bibitem[Hardy and Rogers(1973)]{Hardy-Rogers}
G.~Hardy and T.~Rogers.
\newblock A generalization of a fixed point theorem of reich.
\newblock \emph{Canadian Mathematical Bulletin}, 16\penalty0 (2):\penalty0
  201--206, 1973.
\newblock \doi{10.4153/CMB-1973-036-0}.

\bibitem[Ilchev et~al.(2024)Ilchev, Ivanova, Kulina, Yaneva, and
  Zlatanov]{IIKYZ}
A.~Ilchev, V.~Ivanova, H.~Kulina, P.~Yaneva, and B.~Zlatanov.
\newblock Investigation of equilibrium in oligopoly markets with the help of
  tripled fixed points in banach spaces.
\newblock \emph{Econometrics}, 12\penalty0 (2):\penalty0 18, 2024.
\newblock \doi{10.3390/econometrics12020018}.

\bibitem[Kabaivanov et~al.(2022)Kabaivanov, Zhelinski, and
  Zlatanov]{Kabaivanov}
S.~Kabaivanov, V.~Zhelinski, and B.~Zlatanov.
\newblock Coupled fixed points for hardy--rogers type of maps and their
  applications in the investigations of market equilibrium in duopoly markets
  for non-differentiable, nonlinear response functions.
\newblock \emph{Symmetry}, 14\penalty0 (3):\penalty0 605, 2022.
\newblock \doi{10.3390/sym14030605}.

\bibitem[Kannan(1968)]{Kannan}
R.~Kannan.
\newblock Some results on fixed points.
\newblock \emph{Bulletin of the Calcutta Mathematical Society}, 60:\penalty0
  71--76, 1968.

\bibitem[Kirk et~al.(2003)Kirk, Srinivasan, and Veeramani]{KSV}
W.~Kirk, P.~Srinivasan, and P.~Veeramani.
\newblock Fixed points for mappings satisfying cyclical contractive condition.
\newblock \emph{Fixed Point Theory}, 4\penalty0 (1):\penalty0 179–189, 2003.

\bibitem[Konrad and Leininger(2007)]{KonradLeininger2007}
Kai~A. Konrad and Wolfgang Leininger.
\newblock The generalized stackelberg equilibrium of the all-pay auction with
  complete information.
\newblock \emph{Review of Economic Design}, 11\penalty0 (2):\penalty0 165--174,
  2007.
\newblock \doi{10.1007/s10058-007-0029-0}.

\bibitem[Li(2025)]{Li2025918}
M.~Li.
\newblock Strategic decision-making in oligopoly markets based on stackelberg
  game and reinforcement learning algorithms.
\newblock pages 918--923, 2025.
\newblock \doi{10.1109/ICICR65456.2025.00162}.

\bibitem[Matsumoto and Szidarovszky(2018)]{MS}
A.~Matsumoto and F.~Szidarovszky.
\newblock \emph{Dynamic Oligopolies with Time Delays}.
\newblock Springer, Singapore, 2018.
\newblock ISBN 978-981-13-1785-9.
\newblock \doi{10.1007/978-981-13-1786-6}.

\bibitem[Ohnishi(2021)]{Ohnishi2021}
Kazuhiro Ohnishi.
\newblock A note on stackelberg mixed triopoly games with state-owned,
  labor-managed and capitalist firms.
\newblock \emph{International Game Theory Review (IGTR)}, 23\penalty0
  (01):\penalty0 1--10, March 2021.
\newblock \doi{10.1142/S021919892150002X}.
\newblock URL
  \url{https://ideas.repec.org/a/wsi/igtrxx/v23y2021i01ns021919892150002x.html}.

\bibitem[Pepall et~al.(2014)Pepall, Richards, and Norman]{Pepall}
L.~Pepall, D.~J. Richards, and G.~Norman.
\newblock \emph{Industrial Organization: Contemporary Theory and Empirical
  Applications}.
\newblock Wiley, Hoboken, NJ, 5 edition, 2014.
\newblock ISBN 978-1-118-84403-9.

\bibitem[Petru\c{s}el(2018)]{Petrusel}
A.~Petru\c{s}el.
\newblock Fixed points vs. coupled fixed points.
\newblock \emph{Journal of Fixed Point Theory and Applications}, 20\penalty0
  (4):\penalty0 150, 2018.
\newblock \doi{10.1007/s11784-018-0630-6}.

\bibitem[Petru\c{s}el et~al.(2018)Petru\c{s}el, Petru\c{s}el, Xiao, and
  Yao]{Petrusel18b}
A.~Petru\c{s}el, G.~Petru\c{s}el, Y.-B. Xiao, and J.-C. Yao.
\newblock Fixed point theorems for generalized contractions with applications
  to coupled fixed point theory.
\newblock \emph{Journal of Nonlinear and Convex Analysis}, 19\penalty0
  (1):\penalty0 71--88, 2018.

\bibitem[Reich(1971)]{Reich}
S.~Reich.
\newblock Kannan's fixed point theorem.
\newblock \emph{Bulletin of the Unione Matematica Italiana}, 4\penalty0
  (1):\penalty0 1--11, 1971.

\bibitem[Rhee(2025)]{Rhee2025}
K.~E. Rhee.
\newblock Competitive price discrimination in asymmetric oligopoly.
\newblock \emph{Review of Industrial Organization}, 67\penalty0 (1):\penalty0
  83--110, 2025.
\newblock \doi{10.1007/S11151-024-10004-Y}.

\bibitem[Robson(1990)]{Robson1990}
Arthur~J. Robson.
\newblock Stackelberg and marshall.
\newblock \emph{American Economic Review}, 80\penalty0 (1):\penalty0 69--82,
  1990.
\newblock URL \url{https://www.jstor.org/stable/2006734}.

\bibitem[Salahmanesh et~al.(2024)Salahmanesh, Farazmand, Anvari, and
  Ahmadi]{Salahmanesh20241203}
A.~Salahmanesh, H.~Farazmand, E.~Anvari, and A.~Ahmadi.
\newblock The impact of market penetration costs and rival countries' exports
  on iran's cement export profits in an oligopoly framework.
\newblock \emph{Iranian Economic Review}, 28\penalty0 (4):\penalty0 1203--1227,
  2024.
\newblock \doi{10.22059/ier.2024.346545.1007507}.

\bibitem[Samet and Vetro(2010)]{Samet-Vetro}
B.~Samet and C.~Vetro.
\newblock Coupled fixed point, f--invariant set and fixed point of n-order.
\newblock \emph{Annals of Functional Analysis}, 1\penalty0 (2):\penalty0
  46--56, 2010.
\newblock \doi{10.15352/afa/1399900586}.

\bibitem[Sherali(1984)]{Sherali1984}
Hanif~D. Sherali.
\newblock A multiple leader stackelberg model and analysis.
\newblock \emph{Operations Research}, 32\penalty0 (2):\penalty0 390--404, 1984.
\newblock ISSN 0030364X, 15265463.
\newblock URL \url{http://www.jstor.org/stable/172107}.

\bibitem[Sintunavarat and Kumam(2012)]{SK}
W.~Sintunavarat and P.~Kumam.
\newblock Coupled best proximity point theorem in metric spaces.
\newblock \emph{Fixed Point Theory and Applications}, 2012:\penalty0 93, 2012.
\newblock \doi{10.1186/1687-1812-2012-93}.

\bibitem[Smith(1987)]{Smith}
A.~Smith.
\newblock \emph{A Mathematical Introduction to Economics}.
\newblock Basil Blackwell, Oxford, 1987.
\newblock ISBN 978-0-631-14568-7.

\bibitem[Tesoriere(2017{\natexlab{a}})]{Tesoriere2017Setup}
Antonio Tesoriere.
\newblock Stackelberg equilibrium with multiple firms and setup costs.
\newblock \emph{Journal of Mathematical Economics}, 73:\penalty0 86--102,
  2017{\natexlab{a}}.
\newblock \doi{10.1016/j.jmateco.2017.09.002}.

\bibitem[Tesoriere(2017{\natexlab{b}})]{Tesoriere2017Zero}
Antonio Tesoriere.
\newblock Stackelberg equilibrium with many leaders and followers: The case of
  zero fixed costs.
\newblock \emph{Research in Economics}, 71\penalty0 (1):\penalty0 102--117,
  2017{\natexlab{b}}.
\newblock \doi{10.1016/j.rie.2016.11.004}.

\bibitem[Tirole(1988)]{Tirole}
J.~Tirole.
\newblock \emph{The Theory of Industrial Organization}.
\newblock MIT Press, Cambridge, MA, 1988.
\newblock ISBN 978-0-262-20071-4.

\bibitem[Tron and Th{\'e}ra(2026)]{TronThera2026}
Nguyen~Huu Tron and Michel Th{\'e}ra.
\newblock n-tupled regularity and cyclic fixed points of multivalued mappings
  in metric spaces.
\newblock \emph{Decisions in Economics and Finance}, 2026.
\newblock \doi{10.1007/s10203-026-00568-7}.

\bibitem[Varian(2014)]{Varian}
H.~R. Varian.
\newblock \emph{Intermediate Microeconomics: A Modern Approach}.
\newblock W. W. Norton \& Company, New York, 9 edition, 2014.
\newblock ISBN 978-0-393-93424-0.

\bibitem[Vives(1988)]{Vives}
X.~Vives.
\newblock Sequential entry, industry structure and welfare.
\newblock \emph{European Economic Review}, 32\penalty0 (8):\penalty0
  1671--1687, 1988.

\bibitem[von Stackelberg(2011)]{Stackelberg}
H.~von Stackelberg.
\newblock \emph{Market Structure and Equilibrium}.
\newblock Springer-Verlag, Berlin and Heidelberg, 2011.
\newblock ISBN 978-3-642-12585-0.

\bibitem[Wu et~al.(2025)Wu, Yu, and Yang]{Wu-Yu-Yang}
Y.~Wu, T.~Yu, and N.~Yang.
\newblock Price-based demand response in retail electricity trading with
  prosumers in homogeneous oligopoly market: A game-theoretical approach.
\newblock \emph{International Journal of Electrical Power and Energy Systems},
  172:\penalty0 111335, 2025.
\newblock \doi{10.1016/j.ijepes.2025.111335}.

\bibitem[Yan et~al.(2024)Yan, Zhang, Zhang, Deng, and Gao]{Yan2024}
J.~Yan, J.~Zhang, L.~Zhang, C.~Deng, and T.~Gao.
\newblock Individual and cluster demand response in retail electricity trading
  with end-users in differentiated oligopoly market: A game-theoretical
  approach.
\newblock \emph{International Journal of Electrical Power and Energy Systems},
  161:\penalty0 110118, 2024.
\newblock \doi{10.1016/j.ijepes.2024.110118}.

\bibitem[Yousefimanesh et~al.(2023)Yousefimanesh, Bos, and
  Vermeulen]{YousefimaneshBosVermeulen2023}
Niloofar Yousefimanesh, Iwan Bos, and Dries Vermeulen.
\newblock Strategic rationing in stackelberg games.
\newblock \emph{Games and Economic Behavior}, 140:\penalty0 529--555, 2023.
\newblock \doi{10.1016/j.geb.2023.05.001}.

\bibitem[Zlatanov(2021)]{Zlatanov-FPT}
B.~Zlatanov.
\newblock Coupled best proximity points for cyclic contractive maps and their
  applications.
\newblock \emph{Fixed Point Theory}, 22:\penalty0 431--452, 2021.
\newblock \doi{10.24193/fpt-ro.2021.1.29}.

\end{thebibliography}

\end{document}